\theoremstyle{definition}
\newtheorem{theo}{Theorem}[section]
\newtheorem{cor}[theo]{Corollary}
\newtheorem{lem}[theo]{Lemma}
\newtheorem{notation}[theo]{Notation}
\newtheorem{defi}[theo]{Definition}
\newtheorem{defiprop}[theo]{Definition/Proposition}
\newtheorem{rem}[theo]{Remark}
\newtheorem{prob}[theo]{Problem}
\newtheorem{conjecture}[theo]{Conjecture}
\numberwithin{equation}{section}
\newcommand{\QED}{\mbox{}\hfill$\Box$\medskip}
\newcommand*\hexbrace[2]{%
  \underset{#2}{\underbrace{\rule{#1}{0pt}}}}         
\let\oldemptyset\emptyset
\newcommand{\Z}{{\mathbb Z}}
\newcommand{\Q}{{\mathbb Q}}
\newcommand{\R}{{\mathbb R}}
\newcommand{\cC}{{\mathcal C}}
\newcommand{\cP}{{\mathcal P}}
\newcommand{\cB}{{\mathcal B}}
\newcommand{\cW}{{\mathcal W}}
\newcommand{\Zkn}{\mbox{$\Z_{n\tau}$}}
\newcommand{\hwt}{\mbox{${\rm wt}_{\rm H}$}}
\newcommand{\rk}{\mbox{${\rm rk}$}}
\newcommand{\rate}{\mbox{\rm rate}}
\newcommand{\im}{\mbox{\rm im}\,}
\newcommand{\rs}{\mbox{\rm rs}\,}
\newcommand{\supp}{\mbox{\rm supp}\,}
\renewcommand{\mod}{\,\mbox{\rm mod}\,}
\newcommand{\sbt}{\raisebox{.2ex}{\mbox{$\scriptscriptstyle\bullet\,$}}}
\newcommand{\T}{\mbox{$\!^{\,\sf T}$}}
\newcommand{\w}{\raisebox{-.3ex}{\mbox{$\!^{\tiny\vee}$}}}
\newcommand{\group}[1]{\mbox{$\langle{#1}\rangle$}}
\newcommand{\wh}[1]{\mbox{$\widehat{#1}$}}
\newcommand{\ov}[1]{\mbox{$\overline{#1}$}}
\newcommand{\vect}[1]{\mbox{\textbf{#1}}}
\newcommand{\GL}{\mathrm{GL}}
\newcommand{\dist}{\textup{dist}}
\newcommand{\one}{\mathbbm{1}}
\newcommand{\conv}[1]{\mathrm{conv}\left(#1\right)}
\newcommand{\aff}[1]{\mathrm{aff}\left(#1\right)}
\def\ss{\textsuperscript}
\newcommand{\Vol}{\operatorname{Vol}}
\newcommand{\he}{\text{ht}}
\newcounter{alp}
\newcounter{ara}
\newcounter{rom}
\newenvironment{arabiclist}{\begin{list}{(\arabic{ara})\hfill}{\usecounter{ara}
     \topsep0.4ex \labelwidth.6cm \leftmargin.6cm \labelsep0cm
     \rightmargin0cm \parsep0ex \itemsep0ex}}{\end{list}}
\begin{document}
\title{Laplacian Simplices II: A Coding Theoretic Approach}
\date{\today}
\author{Marie Meyer\thanks{MM is with the Department of Computer and Mathematical Sciences, Lewis University, Romeoville IL 60446, USA; mmeyer2@lewisu.edu.} \ and Tefjol Pllaha\thanks{TP is with the Department of Mathematics, University of Kentucky, Lexington KY 40506-0027, USA; tefjol.pllaha@uky.edu.}}

\maketitle

{\bf Abstract:} This paper further investigates \emph{Laplacian simplices}. 
A construction by Braun and the first author associates to a simple connected graph $G$ a simplex $\cP_G$ whose vertices are the rows of the Laplacian matrix of $G$. 
In this paper we associate to a reflexive $\cP_G$ a duality-preserving linear code $\cC(\cP_G)$.
This new perspective allows us to build upon previous results relating graphical properties of $G$ to properties of the polytope $\cP_G$. 
In particular, we make progress towards a graphical characterization of reflexive $\cP_G$ using techniques from Ehrhart theory.
We provide a systematic investigation of $\cC(\cP_G)$ for cycles, complete graphs, and graphs with a prime number of vertices. We construct an asymptotically good family of MDS codes. In addition, we show that any rational rate is achievable by such construction.

\textbf{Keywords:} lattice polytope, Ehrhart theory, Laplacian simplex, linear codes, duality.

\textbf{MSC (2010):} 05C50, 52B20, 94B05



\section{Introduction}
Laplacian simplices were introduced by Braun and the first author in~\cite{BM17} as a way to associate a polytope to a simple connected graph $G$ through its Laplacian matrix. 
This construction is analogous to that of the \emph{edge polytope}, the convex hull of the columns of the unsigned vertex-edge incidence matrix of a graph, which has been studied in detail over the past several decades; see~\cite{HibiEhrhartEdge,hibiohsugiedgepolytope,TranZiegler,VillarrealEdgePolytopes}. 
Properties of the Laplacian simplex associated to $G$, denoted $\cP_G$, are directly related to properties of $G$. For instance, the normalized volume of $\cP_G$ is equal to the product of the number of vertices and the number of spanning trees of $G$~\cite[Prop. 3.4]{BM17}. 
Certain families of graphs exhibit certain behaviors of $\cP_G$.
For example, if $G$ is a tree or complete graph, then $\cP_G$ is reflexive and satisfies the integer decomposition property. 
In the case of a cycle, $\cP_G$ is reflexive if and only if the cycle has an odd number of vertices.  
The interplay between the graphical structure of $G$ and the geometric structure of $\cP_G$ has already produced interesting results related to reflexivity, the $h^*$-vector, and the integer decomposition property of the polytope~\cite{BM17}. 
A natural generalization of $\cP_G$ is examined in~\cite{digraph}, by considering the Laplacian simplex associated to a digraph $D$. Certain families of these Laplacian polytopes are realized as $q$-simplices and weighted projective spaces.
Thus, although recently introduced, Laplacian simplices are combinatorial objects with interesting connections to other families of simplices. 

This paper extends the initial investigation of $\cP_G$ through the lens of coding theory. The use of lattice polytopes to generate linear codes is a recently developed technique.  
Batyrev and Hofscheier first used linear codes when classifying lattice simplices with a specific binomial $h^*$-polynomials. They associate the additive group of lattice points in the fundamental parallelepiped with a multiplicative group from which yields a linear code~\cite{BH13}. 
This technique is also used in~\cite{HNT15} as a tool in the characterization of lattice simplices with palindromic trinomial $h^*$-polynomial. 
We apply this technique to Laplacian simplices. By examining the lattice points in the fundamental parallelepiped, i.e. codewords, we improve results pertaining to reflexivity, duality, the Ehrhart polynomial, unimodality, and constructions of reflexive Laplacian simplices.  

Our contribution is twofold: an extension of known results of $\cP_G$ found in~\cite{BM17} and an analysis of codes arising from reflexive $\cP_G$. In Section~\ref{sec:Basics}, we introduce lattice polytopes and highlight fundamental techniques from Ehrhart theory that will be frequently used throughout the paper. 
This is followed by a brief description of basic notions in coding theory over integer residue rings.  
In Section~\ref{sec:Laplacian}, we start with a general introduction to Laplacian simplices and an outline of our approach to study lattice points in the fundamental parallelepiped, denoted  $\Pi(\cP_G)$. 
Then we consider only $\cP_G$ which are reflexive.
We provide a vertex description of the dual of a Laplacian simplex and rediscover a known characterization of reflexive $\cP_G$. 
The lattice points in the fundamental parallelepiped of a Laplacian simplex have a particularly nice structure. This allows us to associate to a reflexive $\cP_G$ a linear code $\cC(\cP_G)$ over integer residue rings. We show that this association preserves duality, that is, lattice points in the fundamental parallelepiped of the dual simplex correspond to the dual of $\cC(\cP_G)$. 
The latter half of Section~\ref{sec:Laplacian} presents graph operations on $G$ which preserve the reflexivity of $\cP_G$. We classify $\Pi(\cP_G)$ for whiskered graphs, denoted $\cW(G)$, and bridges of graphs. Whenever $\cP_G$ is reflexive, we show the $h^*$-polynomial of $\cP_{\cW(G)}$ can be written in terms of that of $\cP_G$. The operation of whiskering the whiskers of $\cW(G)$ is generalized to reveal further reflexive families.
Next we consider a new construction of starring the whiskers of a whiskered complete graph. The resulting simplex is reflexive, and we classify its fundamental parallelepiped lattice points. 
Finally we show the bridge of two graphs on the same vertex set whose Laplacian simplices are reflexive yields a reflexive Laplacian simplex, an extension of \cite[Thm. 3.14]{BM17}. This result generalizes to include bridges of any number of graphs. 
In Section~\ref{sec:codes}, we provide an analysis of the linear codes associated to families of reflexive Laplacian simplices. We pay special attention to simple connected graphs with a prime number of vertices. Among other results, we show that starring the whiskers of a whiskered complete graph yields asymptotically good linear codes. We end the paper with some conclusions and future research.

\section{Preliminaries}\label{sec:Basics}
\subsection{Basic Notions of Lattice Polytopes}
A \emph{polytope} $\cP$ is the \emph{convex hull} of finitely many points $\{\vect{v}_1, \ldots, \vect{v}_n\} \subset \R^d$, that is
\begin{equation}\label{e-VP}
\cP=\conv{\vect{v}_1, \ldots, \vect{v}_n}=\left\{ \sum_{i=1}^n\lambda_i \vect{v}_i\,\,\middle|\,\, \sum_{i=1}^n\lambda_i=1,\,\lambda_i \ge 0\right\},
\end{equation}
and $\vect{v}_i$'s are the vertices of $\cP$. 
We focus exclusively on \emph{lattice polytopes}, i.e. polytopes whose vertices have integer entries.
The dimension of $\cP$ is the dimension of the affine space
\[
\aff{\cP} := \{\vect{p} + \alpha (\vect{p}'-\vect{p}) \mid \vect{p},\vect{p}' \in \cP, \alpha \in \R \}\supset \cP.
\]
We will think of a point $\vect{p} \in \R^d$ as a row\footnote[2]{Row vectors are handier because we will often append a 1 to our vectors. Also row vectors are customary in coding theory.} vector. If $n = d+1$ and $\dim \cP = d$ then $\cP$ is called a \emph{$d$-simplex}. In other words, the convex hull of $d+1$ points in $\R^d$ is a $d$-simplex if it is full-dimensional. We stack the vertices of $\cP$ as the rows of a matrix $V\in \Z^{n\times d}$, called the \emph{vertex matrix of $\cP$}. Thus, $\cP$ is a convex subset of the row space of the vertex matrix. For this reason \eqref{e-VP} is called the \emph{vertex description} of $\cP$. For a matrix $M$ we will denote $\conv{M} \subset \rs(M)$ the convex hull of the rows of $M$. For any permutation matrix $P$ we have $\conv{M} = \conv{PM}$ and thus any relabeling of the vertices does not affect the polytope. In this context, relabeling the vertices corresponds to permuting the rows of the vertex matrix. 
Two lattice polytopes $\cP=\conv{\vect{v}_1, \ldots, \vect{v}_n}$ and $\cP'=\conv{\vect{v}_1', \ldots, \vect{v}_n'}$ are \emph{unimodularly equivalent} if there exists an invertible matrix $U\in \GL_d(\Z)$ and a vector $\vect{b}\in \Z^d$ such that $\vect{v}_i' = \vect{v}_iU+\vect{b}$.
Consequently, results for $\cP$ also hold for unimodularly equivalent $\cP'$. 
Every polytope has a vertex description as well as a \emph{hyperplane description}, i.e, writing $\cP$ as the intersection of finitely many hyperplanes.
In particular, we can always write $\cP = \{\vect{x} \in \R^{d} \mid A\vect{x}\T \le \vect{b}\T\}$.


\begin{rem}\label{R-hyperplane}
Let $A_i$ denote the $i\ss{th}$ row of $A$.
To prove $\cP = \{\vect{x}\in \R^{d} \mid A\vect{x}\T \le \vect{b}\}$ is the hyperplane description of the lattice simplex $\cP=\conv{\vect{v}_1, \ldots, \vect{v}_{d+1}}$, it suffices to show that for each $\vect{v}_i$, 
\[
A_i\vect{v}_i\T < b_i \text{ and } A_j \vect{v}_i = b_j \text{ for all } j \ne i \in [d+1].
\] 


\end{rem}

The class of lattice polytopes we consider will be full-dimensional simplices containing the origin in their interior.
The \emph{dual polytope} of a full-dimensional polytope $\cP$ which contains $\vect{0} \in \R^d$ is 
\[
\cP^{\w} := \{ \vect{x} \in \R^d \mid \vect{x} \,\vect{y}\T \le 1 \text{ for all } \vect{y} \in \cP\} \,.
\] 

\begin{rem}\label{R-dual} The vertex description of a polytope yields a hyperplane description of its dual polytope.  
If $\cP = \conv{\vect{v}_1, \ldots, \vect{v}_n}$ with $\vect{0} \in \cP$, then we write
\[
\cP^{\w} = \{ \vect{x} \in \R^d \mid V\vect{x}\T \le \one \},
\]
where $\{\vect{v}_i\}_{i=1}^n$ are the rows of $V$; see~\cite[Thm. 2.11]{ZieglerLectures}.
\end{rem}

Regarding symmetry, reflexive polytopes are a particularly interesting family of polytopes.
A lattice polytope $\cP$ is called \emph{reflexive} if it contains the origin in its interior, and its dual $\mathcal P^{\w}$ is a lattice polytope. 
Consequently, reflexive polytopes arise in pairs; $\cP$ is reflexive iff $\cP^{\w}$ is.
From the description of $\cP^{\w}$ given in Remark~\ref{R-dual}, it follows that $\cP=\conv{\vect{v}_1, \ldots,\vect{v}_n}$ is reflexive iff we can write $\cP=\{\vect{x}\in \R^d \mid A\vect{x}\T \le \one\}$ for some integer matrix $A$.

We count the number of lattice points in the $t\ss{th}$ dilate of $\cP$, written $t \cP := \{t\vect{p} \mid \vect{p} \in \cP\}$ for some $t \in \Z_{>0}$, with the function $L_{\cP}(t) := | t \cP \cap \Z^d |$. It is shown that $L_{\cP}(t)$ is a polynomial of degree $d=\dim(\cP)$ and is referred to as the \emph{Ehrhart polynomial} of $\cP$~\cite{Ehrhart}. 
Note that two unimodularly equivalent polytopes have identical Ehrhart polynomials. 
The generating function of the Ehrhart polynomial is called the \emph{Ehrhart series}. 
It is well-known \cite{BeckRobinsCCD} that the Ehrhart series of a lattice polytope is a rational function of the form
\[
\text{Ehr}_{\cP}(z) = 1 + \sum_{t\ge1} L_{\mathcal P}(t)z^t = \frac{h_d^*z^d + h_{d-1}^*z^{d-1} + \cdots + h_1^*z + h_0^*}{(1-z)^{d+1}}
\] 
where the numerator $h_d^*z^d + h_{d-1}^*z^{d-1} + \cdots + h_1^*z + h_0^*$ is a polynomial of degree at most $d=\dim(\cP)$ with nonnegative integer coefficients and $h_0^*=1$.
This is called the \emph{$h^*$-polynomial} of $\cP$ and is an important invariant as it conveys much information about $\cP$. 
The leading coefficient $h_d^*$ is the number of interior lattice points of $\cP$, and the linear coefficient $h_1^*$ is $|\cP \cap \Z^d|-d-1$.
The sum $\sum_{i=0}^d h_i^*$ is the \emph{normalized volume} of $\cP$, denoted by $\Vol(\cP)$; it is equal to $d!$ times the usual Euclidean volume of $\cP$.  
We extract the coefficients to form the \emph{$h^*$-vector} of $\cP$, $h^*(\mathcal P) := (h_0^*, h_1^*, \ldots , h_d^*)$.
For the case of symmetric $h^*$-vectors, Hibi established the following connection to reflexive polytopes.
\begin{theo}[\cite{Hibi1}]\label{T-Hibi}
A $d$-dimensional lattice polytope $\mathcal P \subset \R^d$ containing the origin in its interior is reflexive if and only if $h^*(\mathcal P)$ satisfies $h_i^* = h_{d-i}^*$ for $0 \le i \le \lfloor \frac{d}{2} \rfloor.$
\end{theo}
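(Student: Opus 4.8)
The plan is to combine the facet characterization of reflexivity recorded above (namely that $\cP$ is reflexive iff $\cP = \{\vect{x}\in\R^d \mid A\vect{x}\T \le \one\}$ for some integer matrix $A$) with Ehrhart--Macdonald reciprocity, which relates the enumeration of \emph{interior} lattice points to the reversal of the $h^*$-polynomial. Writing $h^*(z) = \sum_{i=0}^d h_i^* z^i$ and setting $L_{\cP^\circ}(t) := |(t\cP)^\circ \cap \Z^d|$, reciprocity yields
\[
\sum_{t\ge 1} L_{\cP^\circ}(t)\, z^t \;=\; \frac{\sum_{i=0}^d h_i^*\, z^{\,d+1-i}}{(1-z)^{d+1}}\,.
\]
The symmetry $h_i^* = h_{d-i}^*$ is precisely the assertion that the numerator above equals $z\cdot h^*(z) = \sum_i h_i^*\, z^{i+1}$, i.e. that $h^*$ is palindromic. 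Thus both directions reduce to identifying $\sum_{t\ge1} L_{\cP^\circ}(t)z^t$ with $z\cdot\mathrm{Ehr}_{\cP}(z)$, equivalently to the numerical identity $L_{\cP^\circ}(t) = L_{\cP}(t-1)$ for all $t\ge 1$.

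For the forward implication I would start from an integer facet matrix $A$ with $\cP = \{\vect{x}\mid A\vect{x}\T \le \one\}$, so that $t\cP = \{\vect{x}\mid A\vect{x}\T \le t\one\}$ has interior $\{\vect{x}\mid A\vect{x}\T < t\one\}$. Since $A$ is integral, a lattice point $\vect{x}$ satisfies $A\vect{x}\T < t\one$ iff $A\vect{x}\T \le (t-1)\one$, i.e. iff $\vect{x}\in (t-1)\cP$. Hence $L_{\cP^\circ}(t) = L_{\cP}(t-1)$ for every $t$, and summing against $z^t$ gives $\sum_{t\ge1}L_{\cP^\circ}(t)z^t = z\,\mathrm{Ehr}_{\cP}(z) = z\,h^*(z)/(1-z)^{d+1}$. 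Comparing with the reciprocity expression and matching coefficients forces $h_{i}^* = h_{d-i}^*$.

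For the converse, palindromicity makes the reciprocity numerator equal to $z\,h^*(z)$, so $\sum_{t\ge1}L_{\cP^\circ}(t)z^t = z\,\mathrm{Ehr}_{\cP}(z)$ and therefore $L_{\cP^\circ}(t) = L_{\cP}(t-1)$ for all $t\ge 1$. The key geometric step is to upgrade this count to reflexivity. Writing the facets as $\langle \vect{a}_i, \vect{x}\rangle \le b_i$ with primitive integer normals $\vect{a}_i$ and positive integers $b_i$ (positive since $\vect{0}\in\cP^\circ$), one always has the inclusion $(t-1)\cP\cap\Z^d \subseteq (t\cP)^\circ\cap\Z^d$; combined with the equality of cardinalities this forces equality of the two lattice-point sets for every $t$. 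Reflexivity is equivalent to $b_i = 1$ for all $i$, which I would establish by contradiction: if some $b_i \ge 2$, then the slab $(t-1)b_i < \langle \vect{a}_i,\vect{x}\rangle \le tb_i - 1$ is nonempty, and because $\vect{a}_i$ is primitive the hyperplane $\langle\vect{a}_i,\vect{x}\rangle = tb_i - 1$ meets $\Z^d$; for $t$ large the dilated facet is wide enough to carry such a lattice point lying in $(t\cP)^\circ$ but outside $(t-1)\cP$, contradicting the set equality.

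The main obstacle is exactly this last existence argument in the converse: extracting genuine integrality of the facet distances from an identity of lattice-point \emph{counts}. The counting identity alone does not distinguish individual facets, so the real work is to produce, for large $t$, an actual interior lattice point of $t\cP$ at lattice distance $1$ from a chosen facet while remaining strictly inside all the others. I expect this to follow from a standard asymptotic lattice-point estimate on the dilated facet (whose relative volume grows like $t^{d-1}$) together with primitivity of $\vect{a}_i$, but it is the step that demands care rather than mere bookkeeping.
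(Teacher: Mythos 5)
The paper does not prove this statement; it is Hibi's theorem, quoted verbatim with a citation to \cite{Hibi1}, so there is no in-text argument to compare against. Judged on its own, your proof is the standard one and is essentially correct: Ehrhart--Macdonald reciprocity converts palindromicity of $h^*$ into the identity $L_{\cP^\circ}(t)=L_{\cP}(t-1)$, the forward direction is immediate from integrality of the facet matrix, and the converse reduces to showing that $b_i\ge 2$ for some primitive facet normal would produce, for large $t$, a lattice point in $(t\cP)^\circ\setminus(t-1)\cP$. (Your observation that $b_i\in\Z$ is justified because each facet of a lattice polytope contains lattice vertices and $\vect{a}_i$ is primitive, and the inclusion $(t-1)\cP\cap\Z^d\subseteq(t\cP)^\circ\cap\Z^d$ indeed uses only $b_i>0$, which follows from $\vect{0}\in\cP^\circ$.)

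The one step you flag is the genuine one, and your proposed fix via ``volume grows like $t^{d-1}$'' needs a small sharpening: large $(d-1)$-volume alone does not force a lattice point (a long thin slab has large volume and may miss the lattice). The clean way to close it is to note that the cross-section $t\cP\cap\{\langle\vect{a}_i,\vect{x}\rangle=tb_i-1\}$ equals $t\cdot\bigl(\cP\cap\{\langle\vect{a}_i,\vect{x}\rangle=b_i-\tfrac1t\}\bigr)$, and the slice $\cP\cap\{\langle\vect{a}_i,\vect{x}\rangle=b_i-\tfrac1t\}$ converges to the facet $F_i$, which has nonempty relative interior; hence for large $t$ the cross-section contains a $(d-1)$-ball of radius proportional to $t$. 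Since $\vect{a}_i$ is primitive and $tb_i-1\in\Z$, the lattice points on that hyperplane form a translate of a fixed $(d-1)$-dimensional lattice with bounded covering radius, so a ball of radius $\gg1$ must contain one. That lattice point lies in $(t\cP)^\circ$ but violates $\langle\vect{a}_i,\vect{x}\rangle\le(t-1)b_i$, giving the contradiction. With that adjustment your argument is complete.
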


A vector $\vect{x} = (x_0, x_1, \ldots , x_n)$ is \emph{unimodal} if there exists a $j \in [n]$ such that $x_i \le x_{i+1}$ for all $0 \le i < j$ and $x_k\ge x_{k+1}$ for all $j \le k < n$.
The cause of unimodality among $h^*$-vectors is unknown but of much interest.  
A long standing conjecture asserts that being reflexive and satisfying the integer decomposition property, see~\cite{hibiohsugiconj}, is a sufficient condition for a lattice polytope to have a unimodal $h^*$-vector. 
In~\cite{P08}, the author provides explicit construction of reflexive lattice polytopes with non-unimodal $h^*$-vectors. 
However, it is unknown whether or not IDP alone is a sufficient condition for unimodality.

Simplices play a special role in Ehrhart theory, as there is an alternative and practical method for computing their $h^*$-vectors.
The \emph{cone over} $\cP=\conv{\vect{v}_1, \ldots, \vect{v}_{n}}$ is defined as
\[
\text{cone}(\cP) = \left\{ \sum_{i=1}^n\lambda_i(\vect{v}_i, 1)\,\,\middle|\,\, \lambda_i \ge 0\right\} \subseteq \R^{d+1},
\]
where $(\vect{v}_i, 1)$ is the vertex $\vect{v}_i \in \cP \subset \R^d$ with an appended $1$. The \emph{fundamental parallelepiped} of $\cP$ is the subset of $\text{cone}(\cP)$ defined by 
\[
\Pi(\cP) := \left\{ \sum_{i=1}^{n} \lambda_i (\vect{v}_i, 1) \,\,\middle|\,\, 0 \le \lambda_i < 1  \right\}.
\]

\begin{lem}[\cite{BeckRobinsCCD}]\label{lem:fpp}
Given a lattice simplex $\cP = \conv{\vect{v}_1, \ldots, \vect{v}_{d+1}}$, we have
\[
h_i^*(\cP) = | \Pi(\cP) \cap \{ \vect{x} \in \Z^{d+1} \mid x_{d+1}=i \}|.
\]
Further, $\sum_{i=0}^d h^*_i(\cP) = |\Pi(\cP)\cap \Z^{d+1}|=\Vol(\cP)$, which is also equal to the determinant of the matrix whose $i$\ss{th} row is given by $(\vect{v}_i,1)$.
\end{lem}
In~\cite{BH13}, Batyrev and Hofscheier introduce a connection between lattice simplices, finite abelian groups, and coding theory. 
We build on this work in the context of Laplacian simplices.
To a simplex $\mathcal P = \conv{\vect{v}_1, \ldots, \vect{v}_{d+1}}$ we associate a finite abelian subgroup of $(\Q/\Z)^{d+1}$.
The main idea is to keep track of the linear combinations of vertices which yield fundamental parallelepiped points. Define
\begin{equation}
\Lambda(\cP) := \left\{\lambda = (\lambda_1,\ldots,\lambda_{d+1})\,\, \middle| \,\, \sum_{i=1}^{d+1}\lambda_i(\vect{v}_i, 1) \in \Pi(\cP)\cap \Z^{d+1}\right\}.
\end{equation}
Since $\cP$ is a lattice polytope, for $\lambda\in \Lambda(\cP)$ we have $\lambda_i\in \Q\cap [0,1).$ Thus addition modulo $\Z$ in $\Lambda(\cP)$ is given by
\begin{equation}\label{e-add}
(\lambda_1,\ldots,\lambda_{d+1}) + (\lambda_1',\ldots,\lambda_{d+1}') = (\{\lambda_1+\lambda_1'\},\ldots,\{\lambda_{d+1}+\lambda_{d+1}'\}),
\end{equation}
where $\{\sbt\}$ denotes the fractional part of a number. It follows directly by the definition that 
\begin{equation}\label{e-lfp}
|\Lambda(\cP)| = |\Pi(\cP)\cap\Z^{d+1}| = \Vol(\cP).
\end{equation}
For $\lambda\in \Lambda(\cP)$ the quantity $\he(\lambda):=\sum_{i}^{d+1}\lambda_i\in \Z$ is called the \emph{height of $\lambda$} whereas $\supp(\lambda) : = \{i\mid \lambda_i \neq 0\}$ is called the \emph{support} of $\lambda$. Then, Lemma \ref{lem:fpp} reads as
\begin{equation}\label{e-h*}
h^*_i(\cP) = |\{\lambda \in \Lambda(\cP)\mid \he(\lambda) = i\}|,
\end{equation}
and the reader will verify that
\begin{equation}\label{e-ht}
\he(\lambda) + \he(-\lambda) = |\supp(\lambda)|. 
\end{equation}
Equation \eqref{e-ht} constitutes an obvious connection with coding theory.

\subsection{Basic Notions in Coding Theory}
For natural numbers $m$ and $n$, denote $\Z_m:=\Z/m\Z$ the integer residue ring modulo $m$ and $\Z_m^n$ the direct product of $n$ copies of $\Z_m$. A submodule $\cC \subset\Z_m^n$ is called \emph{linear code of length $n$ over $\Z_m$}. Elements of $\cC$ are called \emph{codewords} and $\Z_m$ is called the \emph{alphabet}. The \emph{Hamming weight} of a codeword $c = (c_1,\ldots ,c_n)$ is given by 
\begin{equation}\label{e-hwt}
\hwt(c):=|\supp(c)|,
\end{equation}
where again $\supp(c) = \{i\mid c_i \neq 0\}$. The \emph{minimum distance} of $\cC$ is then given by
\begin{equation}
\dist(\cC):=\min\{\hwt(c)\mid c\in \cC - \{0\}\}.
\end{equation}
Next, the \emph{dual code} of $\cC$ is given by
\begin{equation}
\cC^\perp:=\{x\in \Z_m^n \mid x\cdot c = 0 \text{ for all } c\in \cC\},
\end{equation}
where $x\cdot c = xc\T=\sum^n_{i=1}x_ic_i$ is the standard dot product in $\Z_m^n$. We say $\cC$ is self-orthogonal (resp., self-dual) if $\cC\subseteq \cC^\perp$ (resp., $\cC = \cC^\perp$). 

The following easily verifiable result will be a crucial counting tool.
\begin{rem}\label{R-CCt}
Let $\cC \subseteq \Z_m^n$ be a linear code. Then $\cC^\perp \cong \Z_m^n/\cC$. As a consequence $|\cC|\!\cdot \!|\cC^\perp| = |\Z_m^n| = m^n$.
\end{rem}
Two linear codes $\cC,\,\cC'$ are called \emph{monomially equivalent} if there exist a permutation $\sigma\in S_n$ and units $u_i\in \Z_n^*$ such that for all $c = (c_1,\ldots,c_n)\in \cC$ we have $(u_1c_{\sigma(1)},\ldots,u_nc_{\sigma(n)})\in \cC'$. If $u_i = 1$ for all $i$ then $\cC$ and $\cC'$ are called \emph{permutation equivalent}. $\cC$ is called \emph{cyclic} if $(c_n,c_1,c_2\ldots,c_{n-1})\in\cC$ for all $c\in \cC$. 
\begin{rem}\label{R-code2}
If $m = p$ is a prime then $\Z_p^n$ is an $\Z_p$-vector space of dimension $n$. Let $\cC\subseteq \Z_p^n$ be linear code of dimension $k$, and let $\{c_1,\ldots, c_k\}$ be a basis of $\cC$. Then $\cC$ is the rowspace of the matrix $A$ whose $i\ss{th}$ row is $c_i$. Such a matrix is called \emph{generating matrix}. A full-dimensional matrix $H$ such that $GH\T=0$ is called \emph{parity-check matrix}. By performing row and column operations we can bring $A$ into the form $[I_k\mid A']$. Such a matrix is said to be in \emph{standard form}. Then $\cC':=\rs[I_k\mid A']$ is monomially equivalent with $\cC$. If $\cC = \rs[I_k\mid A]$ then it follows by Remark \ref{R-CCt} that $\cC^\perp = \rs[-A\T\mid I_{n-k}]$. When $m$ is not a prime the arithmetic depends on the prime factorization of $m$ and thus the notion of a generating matrix and the existence of a generating matrix in ``standard" form becomes less obvious. However, if $m = p^\ell$ is a prime power, it is possible to write a generating matrix in standard form. For the details we refer the reader to \cite{CS95}, and to \cite{NG00} for a more general approach.
\end{rem}
As bypassed in the remark above, working with linear codes over $\Z_m$ (or over any finite ring) requires linear algebra over rings. The standard references on this topic are \cite{McDonaldBernardR1984Laoc,BrownWilliamC1993Mocr}. Although $\Z_m$ has zero divisors in general, many standard results from linear algebra over fields still remain true (like Remark \ref{R-CCt} above), which make the theory of linear codes over rings quite rich. Later on we will make use of the following.
\begin{rem}\label{R-CCt1}
Let $A\in \Z^{n\times n}$ be a square matrix. View $A$ as the $\Z$-module homomorphism $A:\Z_m^n\longrightarrow \Z_m^n, \,x\longmapsto xA$. Then $\ker A, \,\im A\subseteq \Z_m^n$ are linear codes over $\Z_m$. It is straightforward to see that $\Z_m^n = \ker A\oplus \im A$ and $(\ker A)^\perp = \im (A\T)$. Then Remark \ref{R-CCt} implies $|\ker A| = |\ker (A\T)|$ and $|\im A| = |\im (A\T)|$.
\end{rem}
The rate of a linear code $\cC$, denoted $\rate(\cC)$, is the ratio $\log_m(|\cC|)/n$. If $\cC$ is free of rank $k$ then we of course have $\log_m(|\cC|) = k$ and thus $\rate(\cC) = k/n$. Let $\cC_i\subseteq \Z_{m_i}^{n_i}$ be a family of linear codes. We say that the family $\{\cC_i\}$ is \emph{asymptotically good} if 
\begin{equation}
\lim_{i\rightarrow \infty}\rate(\cC_i) \text{ and } \lim_{i\rightarrow \infty}\frac{\dist(\cC_i)}{n_i}
\end{equation}
exist and are nonzero. Otherwise we say that the family $\{\cC_i\}$ is \emph{asymptotically bad}. We refer to the limit values (if they exist) as the \emph{rate of the family $\{\cC_i\}$} and as the \emph{minimum distance of the family $\{\cC_i\}$} respectively. We point out here that the above notions differ from standard asymptotic considerations in the sense that the alphabet size is not constant. 

The following upper bound on the minimum distance, called the Singleton Bound, is well-known \cite{MS77}:
\begin{equation}\label{e-MDS}
\dist(\cC)\leq n - \log_m(|\cC|) +1.
\end{equation}
If \eqref{e-MDS} holds with equality then $\cC$ is called a \emph{Maximum Distance Separable} (MDS) code. We will be mostly focusing on the case $m =p$ prime for which we have the following practical method to compute the minimum distance.
\begin{theo}\label{T-dist}
Let $H$ be a parity-check matrix of a $k$-dimensional linear code $\cC\subseteq \Z_p^n$. Then $\dist(\cC) = d$ iff the minimal number of linearly dependent columns of $H$ is $d$. In particular, $\cC$ is MDS iff $H$ has $n-k+1$ linearly dependent columns and every $n-k$ columns of $H$ are linearly independent.
\end{theo}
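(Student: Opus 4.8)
The plan is to translate membership in $\cC$ into a statement about linear dependence of the columns of $H$, and then read off both the general claim and the MDS specialization. First I would fix notation: write $H = [h_1\mid \cdots \mid h_n]$ with each column $h_j \in \Z_p^{\,n-k}$. Since $H$ is a parity-check matrix, its rows lie in $\cC^\perp$; because $H$ is full-dimensional its rank is $n-k$, which by Remark~\ref{R-CCt} equals $\dim_{\Z_p}\cC^\perp$ (here $\Z_p$ is a field, so $|\cC| = p^k$ and $|\cC^\perp| = p^{\,n-k}$). Hence the rows of $H$ form a basis of $\cC^\perp$, and over the field $\Z_p$ we obtain $\cC = (\cC^\perp)^\perp = \{x\in \Z_p^n \mid Hx\T = 0\}$. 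The crucial observation is that for any $x\in \Z_p^n$,
\[
Hx\T = \sum_{j=1}^n x_j h_j,
\]
so that $x\in \cC$ if and only if the columns of $H$ indexed by $\supp(x)$ satisfy the dependence relation $\sum_{j\in\supp(x)} x_j h_j = 0$ with all coefficients $x_j$ nonzero.

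With this dictionary in hand, I would prove $\dist(\cC) = d'$, where $d'$ denotes the minimal number of linearly dependent columns of $H$, via two inequalities. For $d' \le \dist(\cC)$: take a minimum-weight codeword $c$, so $\hwt(c) = \dist(\cC)$; the relation $\sum_{j\in\supp(c)} c_j h_j = 0$ is a nontrivial dependence among exactly $\hwt(c)$ columns, whence $d' \le \dist(\cC)$. For $\dist(\cC)\le d'$: choose a minimal dependent set $S$ of columns, $|S| = d'$, with nontrivial relation $\sum_{j\in S} a_j h_j = 0$; minimality of $|S|$ forces every $a_j \ne 0$, since otherwise the nonzero part would give a smaller dependent set. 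Setting $c_j = a_j$ for $j\in S$ and $c_j = 0$ otherwise produces a nonzero $c\in\cC$ with $\hwt(c) = d'$, so $\dist(\cC) \le d'$. Combining the two gives $\dist(\cC) = d'$, which is the first assertion.

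For the MDS characterization I would invoke the Singleton Bound \eqref{e-MDS}, which over $\Z_p$ reads $\dist(\cC) \le n - k + 1$ since $\log_p|\cC| = k$. By the first part this is equivalent to $d' \le n-k+1$, and $\cC$ is MDS precisely when $d' = n-k+1$. Now $d' = n-k+1$ says exactly that no $n-k$ columns of $H$ are dependent (i.e. every $n-k$ columns are linearly independent) while some set of $n-k+1$ columns is dependent. The latter condition is in fact automatic, as any $n-k+1$ vectors in $\Z_p^{\,n-k}$ are linearly dependent; thus the real content is the independence of every $n-k$ columns, and this yields the stated criterion.

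I do not anticipate a serious obstacle, as this is the classical column-rank description of the minimum distance; the only points demanding care are (i) justifying $\cC = \{x \mid Hx\T = 0\}$ from the definition of a parity-check matrix together with the field structure of $\Z_p$, so that the dimensions of $\cC$ and $\cC^\perp$ sum to $n$ and double duality holds, and (ii) ensuring that a \emph{minimal} column dependence has full support, so that the associated codeword has weight exactly equal to the number of columns involved rather than strictly fewer.
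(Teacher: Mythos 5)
Your proof is correct. The paper states this theorem without proof, treating it as a classical fact from coding theory, and your argument is precisely the standard one: identifying $\cC$ with $\ker H$ via the field structure of $\Z_p$ and double duality, translating codewords of weight $w$ into dependence relations among $w$ columns (with the key observation that a \emph{minimal} dependent set forces all coefficients nonzero, so the weight is exactly $|S|$ and not less), and then specializing via the Singleton Bound, noting that dependence of some $n-k+1$ columns is automatic in $\Z_p^{\,n-k}$. There is nothing to add.
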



\section{Reflexive Laplacian Simplices}\label{sec:Laplacian}
Let $G$ be a simple connected graph with vertex set $V(G):=[n]=\{1,\ldots,n\}$ and edge set $E(G)$. 
The \emph{Laplacian matrix} of $G$, denoted $L_G$, is the difference of the degree matrix and the $\{0, 1\}$-adjacency matrix of $G$.
Consequently, $L_G$ has rows and columns indexed by $[n]$ with entries $a_{ii}=\deg{i}$, $a_{ij} = -1$ if $\{i,j\} \in E(G)$, and $0$ otherwise.
Let $\tau(G)$ denote the number of spanning trees of $G$.
For the cycle graph with $n$ vertices, $\tau(C_n) = n$.
For the complete graph with $n$ vertices, $\tau(K_n)=n^{n-2}$ using Cayley's formula.
When there is no ambiguity we will denote the Laplacian matrix of $G$ by $L$ and the number of spanning trees by $\tau$.

\begin{notation}We often refer to a submatrix of $L_G$ defined by restricting to specified rows and columns.
For $S, T \subseteq [n]$, define $L_G(S \mid T)$ to be the matrix with rows from $L_{G}$ indexed by $[n]\setminus S$ and columns from $L$ indexed by $[n] \setminus T$. 
For ease of notation, we define $L_{G}(i)$ to be the matrix obtained by deleting the $i$\ss{th} column of $L_{G}$, that is, $L_{G}(i) := L( \oldemptyset \mid i) \in \Z^{n \times (n-1)}$.
Finally, $[L_G(i) \mid \one] \in \Z^{n \times n}$ is the matrix $L_{G}(i)$ with an appended column of $\one$\footnote{The length of $\one$ will be clear from context. Note that, with a slight abuse of notation, we use $\one$ both as a row and as a column vector.}. 
\end{notation}

The following properties of the Laplacian matrix are well-known. For details we refer the reader to \cite{Bapat}.

\begin{theo}\label{T-L}
The Laplacian matrix $L_G$ of a connected graph $G$ with vertex set $[n]$ satisfies the following:
\begin{arabiclist}
\item $L_G \in \Z^{n \times n}$ is symmetric, and thus diagonalizable.
\item Each row and column sum of $L_G$ is $0$. In particular, $\rk(L_G) = n-1$. 
\item $\ker_{\R}{L_G} = \langle \one \rangle$ and $\text{im}_{\R}L_G=\langle \one \rangle^{\perp}$, where we view $L_G$ as the linear map $\vect{x} \longmapsto \vect{x}L$ and the perp denotes the standard orthogonal complement.
\item(\mbox{The Matrix-tree Theorem~\cite{Kirchhoff}}). $\tau(G)$ equals the product of nonzero eigenvalues of $L_G$ divided by $n$. As a consequence, any cofactor of $L_G$ is equal to $\tau(G)$.
\end{arabiclist}
\end{theo}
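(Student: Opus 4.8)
The plan is to establish the four items essentially independently, using only the definition $L_G = D - A$ with $D$ the diagonal degree matrix and $A$ the symmetric adjacency matrix, together with a factorization $L_G = BB\T$ through an oriented incidence matrix. For item (1), symmetry is immediate: $D$ is diagonal and $A_{ij}=A_{ji}$ since $G$ is simple and undirected, so $L_G\T = D\T - A\T = L_G$; diagonalizability then follows from the spectral theorem for real symmetric matrices. For item (2), the $i$-th row sum is $\deg(i) + \sum_{j\neq i,\,\{i,j\}\in E(G)}(-1) = \deg(i) - \deg(i) = 0$, and the column sums vanish by symmetry. Equivalently $L_G\one\T = 0$, so $\one$ lies in the kernel and hence $\rk(L_G)\le n-1$; the equality $\rk(L_G)=n-1$ is deferred to item (3).

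For item (3), I would fix an arbitrary orientation of the edges and let $B\in\Z^{n\times|E(G)|}$ be the signed incidence matrix, whose column for an oriented edge $u\to v$ has a $+1$ in row $u$, a $-1$ in row $v$, and zeros elsewhere. A direct check gives $L_G = BB\T$, whence $L_G$ is positive semidefinite and, for any row vector $\vect{x}\in\R^n$,
\[
\vect{x}L_G\vect{x}\T = \|\vect{x}B\|^2 = \sum_{\{i,j\}\in E(G)}(x_i-x_j)^2.
\]
If $\vect{x}L_G=0$ then this quadratic form vanishes, forcing $x_i=x_j$ for every edge $\{i,j\}$; since $G$ is connected, $\vect{x}$ is constant, so $\ker_{\R} L_G = \langle\one\rangle$ and $\rk(L_G)=n-1$. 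Because $L_G$ is symmetric, $\text{im}_{\R} L_G = (\ker_{\R} L_G)^\perp = \langle\one\rangle^\perp$, completing items (2) and (3).

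Item (4) is the substantive one. I would first show that every cofactor of $L_G$ equals $\tau(G)$. Writing $B_i$ for $B$ with row $i$ deleted, one has $L_G(\{i\}\mid\{i\}) = B_iB_i\T$, so Cauchy--Binet yields
\[
\det L_G(\{i\}\mid\{i\}) = \sum_{S}\det(B_i[S])^2,
\]
the sum over $(n-1)$-subsets $S$ of edges. The key combinatorial input---and the main obstacle---is that the incidence matrix is totally unimodular with $\det(B_i[S])=\pm1$ when $S$ is a spanning tree and $0$ otherwise; granting this, $\det L_G(\{i\}\mid\{i\}) = \tau(G)$. To see that all cofactors coincide, I would use $\det L_G=0$: then $L_G\cdot\mathrm{adj}(L_G)=0$ and $\mathrm{adj}(L_G)\cdot L_G=0$ show that every column and every row of $\mathrm{adj}(L_G)$ lies in $\langle\one\rangle$, so $\mathrm{adj}(L_G)$ is a scalar multiple of the all-ones matrix; since its diagonal entries are the cofactors $\det L_G(\{i\}\mid\{i\})=\tau(G)$, all cofactors equal $\tau(G)$. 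Finally, to relate this to the spectrum, let $0=\mu_1\le\mu_2\le\cdots\le\mu_n$ be the eigenvalues, with exactly one zero by item (3). The coefficient of $t$ in $\det(tI-L_G)$ equals $(-1)^{n-1}$ times the sum of the principal $(n-1)\times(n-1)$ minors, namely $(-1)^{n-1}\sum_i\det L_G(\{i\}\mid\{i\}) = (-1)^{n-1}n\tau(G)$; comparing with the symmetric-function expression $(-1)^{n-1}\prod_{\mu_i\neq0}\mu_i$ gives $\prod_{\mu_i\neq0}\mu_i = n\tau(G)$, i.e. $\tau(G)=\tfrac1n\prod_{\mu_i\neq0}\mu_i$. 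The only genuinely nontrivial step is the spanning-tree characterization of the nonzero incidence minors, which can be proved by induction on $n-1$ by expanding along a leaf edge, or simply cited as a standard fact from \cite{Bapat}.
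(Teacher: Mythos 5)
Your proof is correct. The paper itself offers no proof of this theorem---it is stated as well known, with a pointer to \cite{Bapat}---and your argument is precisely the standard one found there: symmetry and the spectral theorem for (1), row sums for (2), the factorization $L_G=BB\T$ with the quadratic form $\sum_{\{i,j\}\in E}(x_i-x_j)^2$ plus connectivity for (3), and Cauchy--Binet together with the adjugate identity $L_G\cdot\mathrm{adj}(L_G)=0$ and the coefficient of $t$ in the characteristic polynomial for (4). You correctly isolate the one genuinely nontrivial ingredient, namely that $\det(B_i[S])=\pm1$ exactly when $S$ is a spanning tree and $0$ otherwise, and your suggested induction on a leaf edge (or citing it as standard) closes that gap.
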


\begin{rem}\label{R-rk}
Theorem \ref{T-L}(4) shows that the $(i,n)$-cofactor of $[L(n) \mid \one]$ for each $i \in [n]$ is equal to the $(i,n)$-cofactor of $L$, which is $\tau(G)$.
Consequently, $\det{[L(n) \mid \one]} = \sum_{i=1}^n C_{in} = n\tau(G)$ where $C_{in}$ is the $(i,n)$-cofactor of $[L(n) \mid \one]$.
\end{rem}


\begin{defiprop}[\cite{BM17}]
Let $G$ be a simple connected graph. The lattice polytope $\cP_{G,\,i}:= \conv{L(i)}$ is a simplex in $\R^{n-1}$. For any $i \neq j$ we have that $\cP_{G,\,i}$ and $\cP_{G,\,j}$ are unimodularly equivalent. We call $\cP_G:=\cP_{G,\,n}$ the \emph{Laplacian simplex associated to the graph $G$}.
\end{defiprop}
\begin{proof}
By Theorem \ref{T-L}(3) we have that $\cP_{G,\,i}$ is indeed a simplex in $\R^{n-1}$. By making use of Theorem \ref{T-L}(2) it is easy to find $U\in \GL_{n-1}(\Z)$ that yields the equivalence of $\cP_{G,\,i}$ and $\cP_{G,\,j}$.
\end{proof}

\begin{theo}[\cite{BM17}]\label{properties}
The Laplacian simplex $\cP_G$ satisfies the following properties.
\begin{arabiclist}
\item $\cP_G$ has normalized volume equal to $n\tau(G)$.
\item $\cP_G$ contains the origin in its interior.
\item For each $0 \le k \le n-1$ we have $\frac{k}{n}\one \in \Lambda(\cP_G)$, which in turn implies $h_i^*(\cP_G) \ge 1$ for all $0 \le i \le n-1.$
\end{arabiclist}
\end{theo}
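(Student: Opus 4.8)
The plan is to treat the three claims in sequence, with all of them resting on the single structural fact recorded in Theorem~\ref{T-L}(2) that every column sum of $L$ vanishes. Deleting the $n$\ss{th} column preserves this, so if $\vect{v}_1, \ldots, \vect{v}_n$ denote the rows of $L(n)$, i.e. the vertices of $\cP_G$, then $\sum_{i=1}^n \vect{v}_i = \vect{0}$ in $\R^{n-1}$. This relation is the engine behind every part.

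For (1), I would invoke Lemma~\ref{lem:fpp}, which identifies the normalized volume $\Vol(\cP_G)$ with the determinant of the matrix whose $i$\ss{th} row is $(\vect{v}_i, 1)$; by construction this matrix is exactly $[L(n) \mid \one]$. Since Remark~\ref{R-rk} already computes $\det[L(n)\mid\one] = n\tau(G)$ via the Matrix-Tree Theorem (Theorem~\ref{T-L}(4)), claim (1) is immediate.

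For (2), the column-sum relation gives $\sum_{i=1}^n \frac{1}{n}\vect{v}_i = \frac{1}{n}\vect{0} = \vect{0}$, exhibiting the origin as the barycenter of $\cP_G$. As all barycentric coordinates $\frac{1}{n}$ are strictly positive and a simplex has unique barycentric coordinates, this places $\vect{0}$ in the interior of the full-dimensional simplex $\cP_G$, giving (2). For (3), I would compute directly that for $\lambda = \frac{k}{n}\one$ one has $\sum_{i=1}^n \frac{k}{n}(\vect{v}_i,1) = \frac{k}{n}\bigl(\textstyle\sum_i \vect{v}_i,\ n\bigr) = (\vect{0}, k) \in \Z^n$, while $0 \le \frac{k}{n} < 1$ holds precisely when $0 \le k \le n-1$. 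Hence $\frac{k}{n}\one \in \Lambda(\cP_G)$ for each such $k$. Its height is $\he\bigl(\frac{k}{n}\one\bigr) = k$, so by \eqref{e-h*} this lattice point contributes to $h_k^*$; letting $k$ sweep over $0, \ldots, n-1$ yields $h_i^*(\cP_G) \ge 1$ for all $0 \le i \le n-1$.

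I do not expect a serious obstacle: every ingredient has been prepared earlier, and the argument is essentially an assembly. The only points needing a moment of care are, in (2), noting that strict positivity of the (unique) barycentric coordinates — rather than mere membership — is what certifies an interior point; and in (3), confirming that the range $0 \le k \le n-1$ is exactly what keeps the coefficients in $[0,1)$ while still realizing every height index $0, \ldots, \dim\cP_G = n-1$.
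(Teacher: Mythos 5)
Your proof is correct. The paper itself gives no proof of this theorem --- it is quoted from \cite{BM17} --- but your argument is exactly the standard assembly of the ingredients the paper has already prepared: Lemma~\ref{lem:fpp} together with Remark~\ref{R-rk} for the volume, the vanishing column sums of $L$ (Theorem~\ref{T-L}(2)) for the barycenter argument placing $\vect{0}$ in the interior, and the direct computation $\sum_i \frac{k}{n}(\vect{v}_i,1)=(\vect{0},k)$ with height $k$ for part (3); the one point worth a word is that Lemma~\ref{lem:fpp} refers to the determinant up to sign, which is harmless here since $\det[L(n)\mid\one]=n\tau>0$.
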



\begin{theo}\label{fpp}
Let $G$ be a simple connected graph on $n$ vertices with $\tau$ spanning trees. Then
\begin{equation}\label{e-lpg}
\Lambda(\cP_G) = \left\{ \frac{\vect{x}}{n\tau} \,\middle| \, \vect{x} \in \Z^n, \,  0 \le x_i < n\tau, \, \vect{x} \cdot [L(n) \mid \one] \equiv \vect{0} \bmod n\tau \right\}.
\end{equation}
\end{theo}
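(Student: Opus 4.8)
The plan is to view both sides of \eqref{e-lpg} as the same set under the single substitution $\vect{x}=n\tau\,\lambda$. Abbreviate $M:=[L(n)\mid\one]\in\Z^{n\times n}$; by construction the $i$\ss{th} row of $M$ is the appended vertex $(\vect{v}_i,1)$, so for a row vector $\lambda=(\lambda_1,\dots,\lambda_n)$ we have $\sum_{i=1}^n\lambda_i(\vect{v}_i,1)=\lambda M$. Unwinding the definitions of $\Pi(\cP_G)$ and $\Lambda(\cP_G)$, membership $\lambda\in\Lambda(\cP_G)$ is equivalent to the two conditions $0\le\lambda_i<1$ for all $i$ and $\lambda M\in\Z^n$. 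The proof then amounts to transporting these two conditions through $\vect{x}=n\tau\,\lambda$, the crucial input being $\det M=n\tau$ from Remark~\ref{R-rk}.

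First I would check the inclusion ``$\supseteq$''. Given $\vect{x}\in\Z^n$ with $0\le x_i<n\tau$ and $\vect{x}M\equiv\vect{0}\bmod n\tau$, set $\lambda=\vect{x}/(n\tau)$. The bounds on $x_i$ give $0\le\lambda_i<1$, while $\lambda M=\tfrac{1}{n\tau}\,\vect{x}M\in\Z^n$ is a restatement of the congruence. Thus $\sum_i\lambda_i(\vect{v}_i,1)=\lambda M\in\Pi(\cP_G)\cap\Z^n$, so $\lambda\in\Lambda(\cP_G)$.

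For the reverse inclusion ``$\subseteq$'' I would take $\lambda\in\Lambda(\cP_G)$ and put $\vect{x}=n\tau\,\lambda$. Once $\vect{x}\in\Z^n$ is established, everything else is immediate: $0\le\lambda_i<1$ yields $0\le x_i<n\tau$, and $\lambda M\in\Z^n$ rewrites as $\vect{x}M=n\tau\,(\lambda M)\equiv\vect{0}\bmod n\tau$. The only nonroutine point — the main obstacle — is integrality of $\vect{x}$, since a priori the $\lambda_i$ are just rationals in $[0,1)$ with uncontrolled denominators. Here I would invoke the adjugate identity $M\,\operatorname{adj}(M)=\det(M)\,I_n=n\tau\,I_n$ with $\operatorname{adj}(M)\in\Z^{n\times n}$; multiplying $\lambda M\in\Z^n$ on the right by $\operatorname{adj}(M)$ gives
\[
n\tau\,\lambda=\lambda\bigl(M\,\operatorname{adj}(M)\bigr)=(\lambda M)\,\operatorname{adj}(M)\in\Z^n,
\]
which is exactly $\vect{x}\in\Z^n$. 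This is the step where $\det M=n\tau$ enters and pins down the denominators of the $\lambda_i$.

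As a final consistency check I would note that the substitution $\lambda\leftrightarrow\vect{x}$ is a bijection, so the two sets automatically have equal cardinality; this matches the independently known value $|\Lambda(\cP_G)|=\Vol(\cP_G)=n\tau$ coming from \eqref{e-lfp} and Theorem~\ref{properties}(1), since the right-hand set is precisely the set of representatives in $\{0,\dots,n\tau-1\}^n$ of $\ker\bigl(M\colon\Z_{n\tau}^n\to\Z_{n\tau}^n\bigr)$.
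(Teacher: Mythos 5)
Your proposal is correct and follows essentially the same route as the paper: the paper also reduces everything to the substitution $\vect{x}=n\tau\,\lambda$ and establishes integrality of $\vect{x}$ via Cramer's rule applied to $\lambda\,[L(n)\mid\one]=\vect{p}_\lambda\in\Z^n$ with $\det[L(n)\mid\one]=n\tau$, which is just the componentwise form of your adjugate identity. The remaining verifications (the bounds $0\le x_i<n\tau$, the congruence, and the reverse inclusion) are handled the same way in both arguments.
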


\begin{proof}
We show the forward containment. The reverse containment is similar. Let $\lambda\in\Lambda(\cP_G)$. Then $\lambda \cdot [L(n) \mid \one] = :\vect{p}_{\lambda} \in \Z^n$.
Cramer's rule implies $\lambda$ takes the form
\begin{equation}\label{e-bi}
1>\lambda_i = \frac{x_i}{\det{[L(n) \mid \one]}} = \frac{x_i}{n\tau},
\end{equation}
where $x_i = \det{([L(n) \mid \one](i, \vect{p}_{\lambda}))} \in \Z_{\ge 0}$ and $[L(n) \mid \one](i, \vect{p}_{\lambda})$ is the matrix obtained by replacing the $i$\ss{th} row of $[L(n) \mid \one]$ by $\vect{p}_{\lambda}$. Put $\vect{x} = (x_1,\ldots,x_n)$. Then \eqref{e-bi} implies $0\leq x_i<n\tau$ as well as $\vect{x} \cdot [L(n) \mid \one] \equiv \vect{0} \bmod n\tau$.
\end{proof}

 


\begin{rem}\label{R-ker}
Theorem~\ref{fpp} constitutes a useful tool for computing $\Lambda(\cP_G)$. Indeed, combining \eqref{e-lfp} and Theorem \ref{properties}(1) we have $|\Lambda(\cP_G)|=n\tau$. Thus, all we need to do for determining $\Lambda(\cP_G)$ is to find the $n\tau$-many vectors \vect{x} that satisfy $0 \le x_i < n\tau$ and 
\begin{equation}
\vect{x} \cdot [L(n) \mid \one] \equiv \vect{0} \bmod n\tau\,.
\end{equation}
Now consider $[L(n)\mid \one]\in \Z^{n\times n}$ as a $\Z$-module homomorphism $\Z_{n\tau}^n\longrightarrow \Z_{n\tau}^n$ as in Remark \ref{R-CCt1}. Then, Theorem \ref{fpp} implies
\begin{equation}\label{e-ker}
\ker{[L(n) \mid \one]} = \left\{ \ov{\vect{x}} \,\,\middle|\,\, \frac{\vect{x}}{n\tau} \in \Lambda(\cP_G)\right\}.
\end{equation}
In particular we have $|\ker{[L(n) \mid \one]}| = n\tau$, which is not a priori clear. Indeed, since $\det[L(n)\mid\one] = n\tau \equiv 0\mod n\tau$, all we know is that $|\ker [L(n)\mid \one]|\geq n\tau$. Recall also from Remark \ref{R-rk} that the $(i,n)$-cofactors of $[L(n)\mid\one]$ are $\tau$, and of course $\ov{\tau}$ is a zero divisor in $\Z_{n\tau}$. On the other hand, there is no other information about the rest of the cofactors. To wrap it up, \eqref{e-ker} says that $\Lambda(\cP_G)$ is completely determined by $\ker[L(n)\mid \one]$. As mentioned in Remark \ref{R-CCt1}, $\ker[L(n)\mid \one]$ is a linear code over $\Zkn$.
\end{rem}
\begin{rem}\label{R-uni}
Let $\cP = \conv{V}$ and $\cP'=\conv{V'}$ be the vertex description of two Laplacian simplices in $\R^{n-1}$. By Theorem \ref{properties}(2) they contain the origin in their interior. Thus $\cP$ and $\cP'$ are unimodularly equivalent iff exists $U\in \GL_{n-1}(\Z)$ such that $V' = VU$. Making use of Remark \ref{R-ker} and the argument therein with the kernel of the $\Z$-linear map $[L(n)\mid \one]$, it follows\footnote{Note that $U$ is invertible modulo $n$ for any $n$, and thus $\ker[V'\mid \one] = \ker [V\mid \one]$.} that $\Lambda(\cP) = \Lambda(\cP')$. In particular, if $L$ is the Laplacian matrix of $G$ then $\Lambda(\cP_{G,\,i}) = \Lambda(\cP_G)$ and $\ker[L(i)\mid \one] = \ker[L(n)\mid \one]$ for any $i\in [n]$. In other words, $\Lambda(\cP_G)$ is an invariant of the equivalence class of $\cP_G$. It follows that for any $\lambda\in \Lambda(\cP_G)$ we have $\lambda \cdot L\in \Z^n$.
\end{rem}

For the remainder of the paper, we narrow our focus to reflexive Laplacian simplices. 
This restriction forces the associated linear code to be duality-preserving. 
Before we define the linear code, however, we establish properties of reflexive $\cP_G$.

\begin{theo}\label{T-reflexive}
Let $\cP_G$ be the Laplacian simplex associated to $G$. Let $L:=L_G$ be its Laplacian matrix. The vertex matrix $V$ of the dual polytope $\cP_G^{\w}$ satisfies $-\tau V = C(n)$,
where $C$ is the cofactor matrix of $[L(n) \mid \one]$. In addition, the $n^{\text{th}}$ column of $C$ is $\tau\one$.
\end{theo}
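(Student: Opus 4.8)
The plan is to produce the vertices of $\cP_G^{\w}$ explicitly and match them against the columns of the cofactor matrix. First I would record the hyperplane description of the dual: since $\cP_G=\conv{L(n)}$ contains the origin in its interior by Theorem~\ref{properties}(2), Remark~\ref{R-dual} gives $\cP_G^{\w}=\{\vect{x}\in\R^{n-1}\mid L(n)\vect{x}\T\le\one\}$, an intersection of $n$ halfspaces indexed by the rows of $L(n)$. The ``in addition'' clause is then immediate: the $(i,n)$-cofactor of $[L(n)\mid\one]$ equals $\tau$ for every $i$ by Remark~\ref{R-rk} (ultimately the Matrix--Tree Theorem, Theorem~\ref{T-L}(4)), so the $n$th column of $C$ is $\tau\one$. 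Since I will need this inside the main argument, it is convenient to settle it at the outset.

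Next I would invoke the adjugate identity. Writing $M:=[L(n)\mid\one]$, we have $\det M=n\tau$ (Remark~\ref{R-rk}) together with the classical relation $M\,C\T=(\det M)\,I=n\tau\,I$. Reading off the $(i,k)$ entry, this says $\langle M_i,C_k\rangle=n\tau\,\delta_{ik}$, where $M_i$ and $C_k$ denote the $i$th row of $M$ and the $k$th row of $C$. Splitting off the last coordinate, using $M_i=((L(n))_i,\,1)$ and $C_k=(C(n)_k,\,\tau)$ with the column just computed, this becomes
\[
(L(n))_i\cdot C(n)_k=n\tau\,\delta_{ik}-\tau ,
\]
where $C(n)_k$ is the $k$th row of $C(n)$. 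Hence $(L(n))_i\cdot C(n)_k$ equals $-\tau$ when $i\neq k$ and $(n-1)\tau$ when $i=k$.

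Finally I would set $\vect{v}_k:=-\tfrac1\tau C(n)_k$ and verify that these are exactly the vertices of $\cP_G^{\w}$. The displayed identities give $(L(n))_j\cdot\vect{v}_k=1$ for every $j\neq k$ and $(L(n))_k\cdot\vect{v}_k=1-n<1$ (as $n\ge 2$). This is precisely the tight/strict pattern of Remark~\ref{R-hyperplane} with $A=L(n)$ and $\vect{b}=\one$, so $\conv{\vect{v}_1,\ldots,\vect{v}_n}$ has hyperplane description $\{\vect{x}\mid L(n)\vect{x}\T\le\one\}=\cP_G^{\w}$; thus the $\vect{v}_k$ are the vertices of the dual and $-\tau V=C(n)$. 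The one point requiring care, which I expect to be the main (though minor) obstacle, is the hypothesis of Remark~\ref{R-hyperplane} that the $\vect{v}_k$ genuinely form an $(n-1)$-simplex. This can be settled either by the standard fact that the dual of a $d$-simplex containing the origin in its interior is again a $d$-simplex, or concretely by observing that $C=[C(n)\mid\tau\one]$ has $\det C=(\det M)^{\,n-1}=(n\tau)^{\,n-1}\neq 0$ over $\R$, which forces $[C(n)\mid\one]$ and hence $[V\mid\one]$ to be invertible, i.e.\ the $\vect{v}_k$ to be affinely independent.
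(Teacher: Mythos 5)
Your proposal is correct, and it takes a genuinely different route from the paper. The paper's proof solves for each vertex directly: it writes $\vect{v}_i=L(i\mid n)^{-1}\one$, expands that inverse via the cofactor matrix of the submatrix $L(i\mid n)$, obtains $\tau V_{ij}$ as an alternating sum of second minors $\det L(i,k\mid j,n)$, and then separately computes $C_{ij}$ by cofactor expansion of $[L(n)\mid\one](i\mid j)$ along the column of ones to match the two expressions entry by entry. You instead take the candidate vertices $-\tfrac1\tau C(n)_k$ as given and \emph{verify} them: the single adjugate identity $[L(n)\mid\one]\,C\T=n\tau I$, combined with the already-established last column $C_{\sbt n}=\tau\one$, yields the incidence pattern $(L(n))_i\cdot\vect{v}_k=1-n\delta_{ik}$, which is exactly the tight/strict pattern of Remark~\ref{R-hyperplane}. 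What your approach buys is the complete avoidance of the second-minor bookkeeping and of the explicit inversion of $L(i\mid n)$ (and hence of the hypothesis that those first minors are nonzero, which the paper needs to even define $\vect{v}_i$ as the intersection of $n-1$ hyperplanes); what it costs is the extra obligation you correctly identified, namely checking that the candidate points are affinely independent so that Remark~\ref{R-hyperplane} applies, which your determinant computation $\det C=(n\tau)^{n-1}\neq0$ settles. Both arguments are sound; yours is cleaner, the paper's is more self-contained in that it derives rather than verifies the formula. One small presentational note: you implicitly use that the $k$th row of $C$ is $(C(n)_k,\tau)$, i.e.\ that deleting the $n$th column of $C$ and then re-appending $\tau$ recovers $C_k$; this is fine, but it is worth stating that $C(n)$ here denotes $C$ with its $n$th \emph{column} deleted, matching the paper's convention $-\tau V=C(n)$ with $V\in\R^{n\times(n-1)}$.
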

With a slight abuse of notation, we will write $\pm\tau[V\mid \one] = C$ to have lighter notation for future use. At any rate, the sign will be irrelevant for our purposes. 

\noindent\textit{Proof of Theorem} \ref{T-reflexive}. Recall from Remark~\ref{R-dual} the hyperplane description of the dual simplex is given by $\cP_{G}^{\w} = \{\vect{x} \in \R^{n-1} \mid L(n) \vect{x}\T \le \one \}.$
Since the first minors of $L(n)$ are nonzero, each vertex of $\cP_G^{\w}$ is the intersection of $(n-1)$ hyperplanes. For each $i \in [n]$, let $\vect{v}_i$ be the vertex of $\cP_G^{\w}$ which satisfies $L(i \mid n) \vect{v}_i\T = \one.$ Reindex the rows of $L(i \mid n)$ in increasing order by $[n-1]$. 
Then 
\begin{equation*}
\begin{split}
\vect{v}_i = L(i \mid n)^{-1} \cdot \one 
	&= \frac{1}{\det{L(i \mid n)}} \bar{C}\T \cdot \one = \frac{(-1)^{i+n}}{\tau} \left(\sum_{k=1}^{n-1} \bar{C}_{k1}, \sum_{k=1}^{n-1} \bar{C}_{k2}, \ldots, \sum_{k=1}^{n-1} \bar{C}_{k(n-1)}\right),
\end{split}
\end{equation*}
where $\bar{C}$ is the cofactor matrix of $L(i \mid n)$. These $\{\vect{v}_i\}_{i=1}^n$ form the rows of the matrix $V$. Then the $ij\ss{th}$ entry of $\tau V$ is 
\[
\tau V_{ij} = (-1)^{i+n} \sum_{k=1}^{n-1} (-1)^{k+j} \det{L(i,k \mid j, n)}.
\]
Now consider the cofactor matrix $C$ of $[L(n) \mid \one]$.  
For $j \ne n$, we have 
\begin{equation*}
\begin{split}
C_{ij} &= (-1)^{i+j}\det{([L(n) \mid \one](i \mid j))} \\
&= (-1)^{i+j} \sum_{k=1}^{n-1} (-1)^{k+(n-1)}\det{L(i,k \mid j,n)} \\
&= -\tau V_{ij}\,,
\end{split}
\end{equation*}
where $\det{([L(n) \mid \one](i \mid j))}$ is computed by cofactor expansion along the last column of $\one$.
For $j = n$, we have $C_{in} = (-1)^{i+n}\det{L(i \mid n)} = \tau$ by the Matrix-tree Theorem.  
\QED

Applying the above result to the very definition of a reflexive polytope we obtain the following; see also \cite[Thm. 3.7]{BM17}.
\begin{cor}\label{C-ref}
The Laplacian simplex $\cP_G$ is reflexive iff $\tau$ divides every cofactor of $[L(n) \mid \one]$.  
\end{cor}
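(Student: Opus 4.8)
The plan is to derive Corollary~\ref{C-ref} directly from Theorem~\ref{T-reflexive} together with the characterization of reflexivity recorded earlier in the excerpt. Recall that we established (from Remark~\ref{R-dual} and the discussion of reflexive polytopes) that $\cP_G$ is reflexive if and only if its dual $\cP_G^{\w}$ is a lattice polytope, i.e. the vertex matrix $V$ of $\cP_G^{\w}$ has integer entries. So the entire task reduces to translating the condition ``$V\in\Z^{(n-1)\times\ldots}$'' into a divisibility statement about the cofactors of $[L(n)\mid\one]$.

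First I would invoke Theorem~\ref{T-reflexive}, which gives the clean relation $-\tau V = C(n)$, where $C$ is the cofactor matrix of $[L(n)\mid\one]$ and $C(n)$ denotes $C$ with its last column deleted. Reading this entrywise, for each $i\in[n]$ and each $j\in[n-1]$ we have $\tau V_{ij} = \pm C_{ij}$. Hence $V_{ij}\in\Z$ if and only if $\tau \mid C_{ij}$. Since the rows of $V$ are precisely the vertices of $\cP_G^{\w}$, the polytope $\cP_G^{\w}$ is a lattice polytope if and only if $\tau$ divides $C_{ij}$ for every $i\in[n]$ and every $j\in[n-1]$.

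The last step is to account for the missing column $j=n$. The second assertion of Theorem~\ref{T-reflexive} states that the $n$\ss{th} column of $C$ equals $\tau\one$, so $C_{in}=\tau$ for all $i$, and thus $\tau$ trivially divides every entry of that column. Therefore the condition ``$\tau$ divides the cofactors $C_{ij}$ for $j\neq n$'' is equivalent to ``$\tau$ divides \emph{every} cofactor of $[L(n)\mid\one]$'', the column $j=n$ imposing no extra constraint. Combining the two equivalences yields: $\cP_G^{\w}$ is a lattice polytope $\iff$ $\tau$ divides every cofactor of $[L(n)\mid\one]$. Since $\cP_G$ is reflexive exactly when $\cP_G^{\w}$ is a lattice polytope (and $\cP_G$ already contains $\vect{0}$ in its interior by Theorem~\ref{properties}(2)), this is precisely the claim.

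I do not anticipate a genuine obstacle here: the proof is essentially a bookkeeping unwinding of Theorem~\ref{T-reflexive}, and all the analytic content (the cofactor identity and the interior-point property) has already been proved upstream. The one point requiring mild care is the logical handling of the $n$\ss{th} column, i.e. making sure one states the equivalence against \emph{all} cofactors rather than only the truncated matrix $C(n)$; the observation that the last column is $\tau\one$ is exactly what lets us upgrade the restricted divisibility condition to the unrestricted one without changing its content.
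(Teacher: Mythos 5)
Your proposal is correct and is exactly the argument the paper intends: the corollary is stated as an immediate consequence of Theorem~\ref{T-reflexive} applied to the definition of reflexivity, and your entrywise unwinding of $-\tau V = C(n)$, together with the observation that the $n$\ss{th} column $\tau\one$ imposes no extra condition, is just a more explicit write-up of that same deduction.
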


This characterization of reflexive Laplacian simplices yields an improvement of the description of $\Lambda(\cP_G)$ from Theorem~\ref{fpp}.

\begin{theo}\label{T-kappadivides}
Let $G$ be a simple connected graph with $n$ vertices such that the associated $\cP_G$ is reflexive. Then 
\[
\Lambda(\cP_G) = \left\{ \frac{\vect{x}}{n} \, \middle| \, \vect{x} \in \Z^n, \,  0 \le x_i < n, \, \vect{x} \cdot [L(n) \mid \one] \equiv \vect{0} \bmod n \right\}.
\]
\end{theo}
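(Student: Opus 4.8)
The plan is to establish the claimed equality by proving the two containments separately, with the entire improvement over Theorem~\ref{fpp} residing in a single divisibility observation: under reflexivity, each coordinate $\lambda_i$ of an element of $\Lambda(\cP_G)$ has denominator dividing $n$ rather than $n\tau$.

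For the forward containment I would take an arbitrary $\lambda\in\Lambda(\cP_G)$ and reuse the Cramer's rule computation from the proof of Theorem~\ref{fpp}: one has $\lambda_i=x_i/(n\tau)$ with $x_i=\det([L(n)\mid\one](i,\vect{p}_\lambda))$ and $\vect{p}_\lambda=\lambda\cdot[L(n)\mid\one]\in\Z^n$. Expanding this determinant along its $i$th row---the row that was replaced by $\vect{p}_\lambda$---gives $x_i=\sum_{j=1}^n(\vect{p}_\lambda)_j\,C_{ij}$, where $C_{ij}$ is the $(i,j)$-cofactor of $[L(n)\mid\one]$; these cofactors are unaffected by the replacement because they omit row $i$. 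Here is where reflexivity enters: by Corollary~\ref{C-ref}, $\tau$ divides every cofactor $C_{ij}$, so $\tau\mid x_i$. Writing $x_i=\tau y_i$ then gives $\lambda_i=y_i/n$ with $y_i\in\Z$, the bound $0\le x_i<n\tau$ becomes $0\le y_i<n$, and substituting $\vect{x}=\tau\vect{y}$ into $\vect{x}\cdot[L(n)\mid\one]\equiv\vect{0}\bmod n\tau$ and cancelling $\tau$ yields $\vect{y}\cdot[L(n)\mid\one]\equiv\vect{0}\bmod n$.

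The reverse containment I expect to be immediate. Since $\det[L(n)\mid\one]=n\tau\neq0$, the rows $(\vect{v}_i,1)$ form a basis of $\R^n$, so membership in $\Lambda(\cP_G)$ is equivalent to $0\le\lambda_i<1$ for all $i$ together with $\lambda\cdot[L(n)\mid\one]\in\Z^n$. Given $\vect{y}$ satisfying $0\le y_i<n$ and $\vect{y}\cdot[L(n)\mid\one]\equiv\vect{0}\bmod n$, the point $\lambda=\vect{y}/n$ meets both conditions and hence lies in $\Lambda(\cP_G)$; equivalently, one may feed $\vect{x}=\tau\vect{y}$ directly into Theorem~\ref{fpp}.

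The only genuine obstacle is the cofactor-expansion identity in the forward direction, and it is resolved in one line by Corollary~\ref{C-ref}; the remaining bound- and modulus-chasing is routine. As a consistency check I would note that combining this equality with $|\Lambda(\cP_G)|=n\tau$ from \eqref{e-lfp} and Theorem~\ref{properties}(1) recovers $|\ker([L(n)\mid\one]\bmod n)|=n\tau$, sharpening the count in Remark~\ref{R-ker}.
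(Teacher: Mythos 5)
Your proposal is correct and follows essentially the same route as the paper: the paper writes $\lambda = \vect{p}_{\lambda}\cdot[L(n)\mid\one]^{-1} = \frac{\vect{p}_{\lambda}}{n\tau}\,C\T$ and cancels $\tau$ because every entry of the cofactor matrix $C$ is a multiple of $\tau$ (Theorem~\ref{T-reflexive}), which is exactly your Cramer's-rule/cofactor-expansion computation combined with Corollary~\ref{C-ref}. Your explicit treatment of the reverse containment is a minor completeness bonus over the paper's one-directional write-up, not a different method.
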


\begin{proof}
For $\lambda \in \Lambda(\cP_G)$, we have $\lambda \cdot [L(n) \mid \one] =: \vect{p}_{\lambda} \in \Z^n$. 
Then 
\begin{equation}\label{e-pct}
\lambda = \vect{p}_{\lambda} \cdot [L(n) \mid \one]^{-1} = \frac{\vect{p}_{\lambda}}{n\tau}  \cdot C\T
\end{equation}
where $C\T$ is the transpose of the cofactor matrix of $[L(n) \mid \one]$. 
By Theorem~\ref{T-reflexive}, each entry of $C$ is a multiple of $\tau$.
Thus $\tau$ cancels out in~\eqref{e-pct}.
\end{proof}

Theorem \ref{T-kappadivides} enables us to associate to a reflexive $\cP_G$ a duality-preserving linear code.

\begin{defi}
Let $G$ be a simple connected graph with $n$ vertices such that the associated $\cP_G$ is reflexive. The submodule 
\[
\cC(\cP_G):=\ker [L(n)\mid \one] = \left\{\,\ov{\vect{x}}\mid \vect{x}/n\in \Lambda(\cP_G)\right\}\subseteq \Z_n^n
\]
is called the \emph{linear code associated to the reflexive Laplacian simplex $\cP_G$}.
\end{defi}

\begin{rem}\label{R-why}
Note that one can associate a linear code over $\Z_{n\tau}$ of length $n$ and cardinality $n\tau$ to any Laplacian simplex $\cP_G$. However, in this case the linear code will have an extremely low rate of $1/n\tau$. If $\cP_G$ is reflexive then $\rate(\cC(\cP_G)) = \log_n(n\tau)/n$. Thus graphs with a high number of spanning trees and reflexive associated Laplacian simplices yield linear codes with high rate. For instance, the Laplacian simplex associated to the complete graph on $n$ vertices $K_n$ is reflexive by Theorem~\ref{T-complete}. Since $\tau(K_n) = n^{n-2}$ we conclude that 
\[
\lim_{n\rightarrow \infty}\rate(\cC(\cP_{K_n})) = \lim_{n\rightarrow \infty}  \frac{n-1}{n} = 1.
\]
However, as we will see, $\dist(\cC(\cP_{K_n}))=2$, and thus the family $\{\cC(\cP_{K_n})\}$ is asymptotically bad. Excessive spanning trees of course will yield a low minimum distance. A similar scenario holds for graphs with a low number of spanning trees. The families $\{\cC(\cP_{T_n})\}$ and $\{\cC(\cP_{C_n})\}$ associated to trees and cycles are asymptotically bad because, as we will see, though $\dist(\cC(\cP_{T_n})) = n$ and $\dist(\cC(\cP_{C_n})) = n - 1$ we have $\rate(\cC(\cP_{T_n})) = 1/n$ and $\rate(\cC(\cP_{C_n})) = 2/n$. So the ultimate goal must be to keep a balance between the number of spanning trees and the minimum distance.
\end{rem}
In addition to the reasons mentioned in Remark \ref{R-why}, reflexive Laplacian simplices produce a nice duality relation, which we discuss next. Note first that the dual of a reflexive Laplacian simplex, though reflexive, is not Laplacian in general. See Theorem~\ref{T-completedual} for an example where $\cP^{\w}_G$ is a Laplacian simplex. Despite this, we have the following.

\begin{theo}\label{T-dualthm}Let $G$ be a simple connected graph with $n$ vertices such that the associated $\cP_G$ is reflexive. Then 
\begin{equation}\label{e-2}
\Lambda(\cP^{\w}_G) = \left\{\frac{\vect{x}}{n} \,\,\middle|\,\, \ov{\vect{x}} \in \cC(\cP_G)^{\perp}\right\}.
\end{equation}
\end{theo}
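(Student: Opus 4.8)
The plan is to run, for the dual simplex $\cP_G^{\w}$, exactly the Cramer's-rule computation that produced Theorems~\ref{fpp} and~\ref{T-kappadivides} for $\cP_G$ itself, the new ingredient being that Theorem~\ref{T-reflexive} lets me invert the cone matrix of $\cP_G^{\w}$ explicitly. Since $\cP_G$ is reflexive, so is $\cP_G^{\w}$, and by Theorem~\ref{T-reflexive} its vertex matrix is $V$; hence its cone matrix (rows $(\vect{v}_i,1)$) is $[V\mid\one]$, an integer matrix. Writing $A:=[L(n)\mid\one]$ and letting $C$ be its cofactor matrix, Theorem~\ref{T-reflexive} gives $\pm\tau[V\mid\one]=C$. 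For $\mu\in\Lambda(\cP_G^{\w})$ I set $\vect{p}:=\mu\cdot[V\mid\one]\in\Z^n$, so that $\mu=\vect{p}\cdot[V\mid\one]^{-1}$, and the whole proof hinges on evaluating this inverse.

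The crucial step is the identity
\[
[V\mid\one]^{-1}=\pm\tfrac1n\,A\T=\pm\tfrac1n\,[L(n)\mid\one]\T .
\]
This follows from the adjugate relation $A\cdot\mathrm{adj}(A)=(\det A)\,I$: here $\mathrm{adj}(A)=C\T$ and $\det A=n\tau$ by Remark~\ref{R-rk}, so $CA\T=n\tau I$, whence $C^{-1}=\tfrac{1}{n\tau}A\T$, and combining with $[V\mid\one]=\pm\tfrac1\tau C$ the factor $\tau$ cancels. Consequently $\mu=\pm\tfrac1n\,\vect{p}A\T$; putting $\vect{x}:=\pm\vect{p}A\T\in\Z^n$ gives $\mu=\vect{x}/n$ with $\ov{\vect{x}}=\pm\ov{\vect{p}}\,A\T\in\im(A\T)$ over $\Z_n$. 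By Remark~\ref{R-CCt1}, $\im(A\T)=(\ker A)^{\perp}=\cC(\cP_G)^{\perp}$, which yields the forward containment $\Lambda(\cP_G^{\w})\subseteq\{\vect{x}/n\mid\ov{\vect{x}}\in\cC(\cP_G)^{\perp}\}$; since the assignment $\vect{x}/n\mapsto\ov{\vect{x}}$ is injective on numerator vectors with entries in $[0,n)$, this gives an injection of $\Lambda(\cP_G^{\w})$ into $\cC(\cP_G)^{\perp}$.

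To close, I would count rather than verify the reverse containment by hand, in the spirit of Remark~\ref{R-ker}. On one side, $|\Lambda(\cP_G^{\w})|=\Vol(\cP_G^{\w})=|\det[V\mid\one]|$ by Lemma~\ref{lem:fpp}; using $\det C=(\det A)^{n-1}=(n\tau)^{n-1}$ together with $\pm\tau[V\mid\one]=C$ gives $|\det[V\mid\one]|=n^{n-1}/\tau$. On the other side, $|\cC(\cP_G)|=|\ker A|=n\tau$ by Remark~\ref{R-ker}, so $|\cC(\cP_G)^{\perp}|=n^n/(n\tau)=n^{n-1}/\tau$ by Remark~\ref{R-CCt}. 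The two cardinalities agree, so the injection above is a bijection and~\eqref{e-2} follows. The main obstacle, and the only genuinely delicate point, is the $\tau$-bookkeeping in the crucial step: one must see that reflexivity (through Theorem~\ref{T-reflexive}, i.e. $\tau$ dividing every cofactor) is exactly what makes $\tau$ cancel, so that the denominator drops from $n\tau$ to $n$ and $\ov{\vect{x}}$ genuinely lands in $\Z_n^n$; the sign ambiguity $\pm$ is harmless, since it affects neither images nor kernels.
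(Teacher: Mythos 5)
Your proposal is correct and follows essentially the same route as the paper's proof: both rest on Theorem~\ref{T-reflexive} together with the adjugate identity $[L(n)\mid\one]\T C=n\tau I_n$, the identification $\cC(\cP_G)^{\perp}=\im([L(n)\mid\one]\T)$ from Remark~\ref{R-CCt1}, and the identical cardinality count $|\Lambda(\cP_G^{\w})|=n^{n-1}/\tau=|\cC(\cP_G)^{\perp}|$. The only difference is cosmetic: you establish the containment $\Lambda(\cP_G^{\w})\subseteq\{\vect{x}/n\mid\ov{\vect{x}}\in\cC(\cP_G)^{\perp}\}$ directly by inverting $[V\mid\one]$, whereas the paper establishes the reverse containment directly, with counting closing the gap in either case.
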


\begin{proof}
Note first by Remark \ref{R-CCt1} we have $\cC(\cP_G)^{\perp} = (\ker[L(n)\mid \one])^\perp = \im ([L(n)\mid\one]\T)$. Thus $\ov{\vect{x}}\in \cC(\cP_G)^{\perp}$ iff there exists $\ov{\vect{z}}\in \Z_n^n$ such that $\ov{\vect{x}} = \ov{\vect{z}}[L(n)\mid\one]\T$. As usual, let $C$ be the cofactor matrix of $[L(n)\mid \one]$. As such, $C$ satisfies $[L(n)\mid\one]\T\cdot C = n\tau I_n.$ Thus $\ov{\vect{x}}C = n\tau\ov{\vect{z}}.$ This along with Theorem \ref{T-reflexive} yield
\begin{equation}\label{e-1}
\frac{\vect{x}}{n} [V\mid \one] = \pm\frac{\vect{x}}{n\tau}C \in \Z^n,
\end{equation}
where $V$ is the vertex matrix of $\cP_G^{\w}$. We have shown $``\!\!\supseteq$'' in \eqref{e-2}. Next, by Remark \ref{R-CCt} we have $|\cC(\cP_G)^\perp| =n^n/|\cC(\cP_G)| = n^{n-1}/\tau $. Thus
\[
|\Lambda(\cP_G^{\w})| = |\det([V\mid \one])| = |\det\left(\pm\frac{1}{\tau}C\right)| = \frac{1}{\tau^n}\cdot \frac{(n\tau)^n}{n\tau} = |\cC(\cP_G)^\perp|\,,
\]
and equality in \eqref{e-2} follows.
\end{proof}
\begin{rem}
Note the proof of Theorem \ref{T-dualthm} shows $\Vol(\cP_G^{\w}) = n^{n-1}/\tau$ regardless of whether $\cP_G$ is reflexive or not. 
Once again we focus on reflexive $\cP_G$ in light of the duality \eqref{e-2}.
\end{rem}
Throughout the paper we have been dealing with finite abelian groups, and thus it is worth mentioning how their rich duality theory relates to our specific case. Let $A$ be a finite abelian group. Then $\wh{A}:=\hom_{\Z}(A,\Q/\Z)$ is called the \emph{Pontryagin dual of $A$}. For a subgroup $B\leq A$ we denote $B^\circ:=\{f \in \wh{A}\mid f(b) = 0\text{ for all } b\in B\}.$ It is well-known that $\wh{A}\cong A$ and $B^\circ \cong \wh{A/B}$.
\begin{rem}\label{R-dual1}
Denote $\Lambda_n:=\{\vect{x}/n\mid \ov{\vect{x}}\in \Z_n^n\}$. With addition as in \eqref{e-add}, $\Lambda_n$ becomes a finite abelian group of $(\Q/\Z)^n$. Let $G$ be a simple connected graph with $n$ vertices such that the associated $\cP_G$ is reflexive. Then Theorem \ref{T-kappadivides} implies $\Lambda(\cP_G)\leq \Lambda_n$ is a subgroup. Clearly, the map 
\[
\Z_n^n\longrightarrow \Lambda_n, \,\,\, \ov{\vect{x}} \longmapsto \vect{x}/n,
\]
is an isomorphism of groups that maps $\cC(\cP_G)$ to $\Lambda(\cP_G)$. Then, combining Theorem \ref{T-dualthm} and Remark \ref{R-CCt} we obtain
\[
\Lambda(\cP_G)^\circ \cong \wh{\Lambda_n/\Lambda(\cP_G)} \cong \Z_n^n/\cC(\cP_G) \cong \cC(\cP_G)^\perp \cong \Lambda(\cP_G^{\w}).
\]
\end{rem}

\subsection{Graph Operations}

This section explores graph operations on $G$ which preserve the reflexivity of $\cP_G$.
In~\cite{BM17}, the authors present \emph{whiskering} and \emph{bridging} of graphs that satisfy certain conditions as behaving nicely with reflexive Laplacian simplices.  
Here we lift these conditions to improve their results by considering the group $\Lambda(\cP_G)$ for these constructions on $G$. We also present new graph constructions which preserve the reflexivity of $\cP_G$.

\begin{defi}\label{D-whisker}
To \emph{whisker} a graph $G$ is to attach an edge and vertex to each existing vertex in $G$. 
Let $\cW(G)$ denote whiskered graph of $G$. 
If $V(G) = \{1, \ldots n\}$, then $V(\cW(G)) = V(G) \cup \{1+n, \ldots, n+n\}$ and $E(\cW(G)) = E(G) \cup \{i,i+n \}_{i=1}^n.$
\end{defi}

The simplex $\cP_{\cW(G)}$ satisfies $\dim{\cP_{\cW(G)}} = 2\dim{\cP_G},$ since we are doubling the number of vertices. It is known \cite[Prop. 5.6]{BM17} that if $\cP_G$ is reflexive, then $\cP_{\cW(G)}$ is reflexive. The structure of $\Lambda(\cP_{\cW(G)})$ in terms of $\Lambda(\cP_G)$ is given below.

\begin{theo}\label{T-whisker}
Let $G$ be a graph with vertex set $[n]$ such that $\cP_G$ is reflexive. Let $L \in \Z^{2n \times 2n}$ be the Laplacian matrix of $\cW(G)$. Then the lattice points in $\Pi(\cP_{\cW(G)})$ are of the form
\begin{equation}\label{e-whisker}
\left\{ (\lambda, \lambda) \cdot [L(n) \mid \one], \left( (\lambda, \lambda) + \frac{1}{2n} \one \right) \cdot [L(n)\mid \one]\,\, \middle|\,\, \lambda \in \Lambda(\cP_G) \right\}.
\end{equation}
Here the $n\ss{th}$ column of $L$ is deleted to make use of $L_G(n)$.
\end{theo}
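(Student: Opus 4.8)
The plan is to identify $\Lambda(\cP_{\cW(G)})$ explicitly and then transport the answer to lattice points through the bijection $\mu\mapsto \mu\cdot[L(n)\mid\one]$ between $\Lambda(\cP_{\cW(G)})$ and $\Pi(\cP_{\cW(G)})\cap\Z^{2n}$. First I would record the block form
\[
L=\begin{pmatrix} L_G+I_n & -I_n\\ -I_n & I_n\end{pmatrix},
\]
and note that each whisker edge is pendant, hence lies in every spanning tree; this gives a bijection between spanning trees of $\cW(G)$ and of $G$, so $\tau(\cW(G))=\tau$. By Theorem~\ref{properties}(1) together with \eqref{e-lfp}, $|\Lambda(\cP_{\cW(G)})|=2n\tau$.

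Next I would strip away the column-deletion bookkeeping. By definition $\mu\in[0,1)^{2n}$ lies in $\Lambda(\cP_{\cW(G)})$ iff $\mu\cdot[L(n)\mid\one]\in\Z^{2n}$. Because every column of $L$ sums to $\vect{0}$ (Theorem~\ref{T-L}(2)), the suppressed $n\ss{th}$ column is minus the sum of the others, so this integrality is equivalent to the cleaner pair of conditions $\mu L\in\Z^{2n}$ and $\he(\mu)=\mu\cdot\one\in\Z$.

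The core of the proof is then a one-line block computation for each candidate family. For $\lambda\in\Lambda(\cP_G)$, the vector $\mu=(\lambda,\lambda)$ satisfies $\mu L=(\lambda L_G,\vect{0})$, which is integral by Remark~\ref{R-uni}, while $\he(\mu)=2\he(\lambda)\in\Z$. For the shifted family, writing $\nu=\lambda+\tfrac{1}{2n}\one$ and using $\one L_G=\vect{0}$, one gets $\mu L=(\nu L_G,\vect{0})=(\lambda L_G,\vect{0})\in\Z^{2n}$ and $\he((\nu,\nu))=2\he(\lambda)+1\in\Z$; moreover the shift keeps each coordinate inside $[0,1)$ (its numerator over $2n$ becomes $2x_i+1<2n$), so no reduction modulo $\Z$ is needed. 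Hence both families sit inside $\Lambda(\cP_{\cW(G)})$.

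Finally I would close by counting. Both parametrizations $\lambda\mapsto(\lambda,\lambda)$ and $\lambda\mapsto(\lambda,\lambda)+\tfrac{1}{2n}\one$ are injective, and their images are disjoint since every coordinate of the first has even numerator over $2n$ while every coordinate of the second has odd numerator. The union therefore has $2\,|\Lambda(\cP_G)|=2n\tau$ elements, matching $|\Lambda(\cP_{\cW(G)})|$; being contained in it, the union is all of $\Lambda(\cP_{\cW(G)})$, and applying $\mu\mapsto\mu\cdot[L(n)\mid\one]$ produces exactly \eqref{e-whisker}. I expect the only real friction to be getting the block multiplication and the column-sum reduction exactly right; once the two explicit families are shown to be integral, the volume equality finishes everything.
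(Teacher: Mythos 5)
Your proposal is correct and follows essentially the same route as the paper: verify by block multiplication that both families $(\lambda,\lambda)$ and $(\lambda,\lambda)+\frac{1}{2n}\one$ land in $\Lambda(\cP_{\cW(G)})$ (using $\lambda L_G\in\Z^n$ and $\one L_G=\vect{0}$), then close by comparing the count $2n\tau(G)$ against $\Vol(\cP_{\cW(G)})=2n\tau(\cW(G))$. Your explicit even/odd-numerator argument for disjointness of the two families is a small point the paper leaves implicit, and your bound $\frac{2x_i+1}{2n}<1$ is stated more cleanly than the paper's, but the substance is identical.
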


\begin{proof}
The Laplacian matrix of $\cW(G)$ is of the form
\[
L := \left(\begin{array}{c|c}
L_G+I_n & -I_n \\
\hline
-I_n & I_n
\end{array}\right),
\]
where $L_G$ is the Laplacian matrix of $G$.
When considering $\lambda \cdot [L(n) \mid \one] \in \Z^{2n}$, the last $n$ columns of $L(n)$ yield $\lambda_i = \lambda_{i+n}$ for each $i \in [n]$.
Now let $\lambda \in \Lambda(\cP_G)$.
If $\lambda \cdot L_G(n) =: \vect{p}_\lambda \in \Z^{n-1}$, then $(\lambda,\lambda)\cdot[L(n)\mid \one] = (\vect{p}_\lambda,\vect{0}^n,2 \, \he(\lambda)) \in \Z^{2n},$ which is in $\Pi(\cP_{\cW(G)})$.
Further, $(\lambda, \lambda) + \frac{1}{2n} \one \cdot [L(n) \mid \one] = (\vect{p}_{\lambda}, \vect{0}^n, 2 \, \he(\lambda) + 1) \in \Z^{2n}.$
This is in $\Pi(\cP_{\cW(G)})$ because $(\lambda, \lambda) + \frac{1}{2n} \one$ has entries $\lambda_i$ which satisfy $0 \le \lambda_i \le \frac{n-1}{n} + \frac{1}{2n} = \frac{2n-1}{n} < 1.$

It is left to show \eqref{e-whisker} contains all the lattice points in $\Pi(\cP_{\cW(G)})$.
Let $\tau(G)$ be the number of spanning trees of $G$. 
Then $|\{ (\lambda, \lambda) \mid \lambda \in \Lambda(\cP_G)\}| = n\tau(G)$, which implies \eqref{e-whisker} contains $2n\tau(G)$ lattice points.  
This equals $|\Pi(\cP_{\cW(G)}) \cap \Z^{2n}| = 2n\tau(\cW(G))$, as computed from Theorem~\ref{properties}(1) along with the observation $\tau(G) = \tau(\cW(G))$.
\end{proof}

\begin{cor}
The Laplacian simplex $\cP_{\cW(G)}$ such that $\cP_G$ is reflexive has $h^*$-polynomial of the form
\[
h^*_{\cP_{\cW(G)}}(z) = (1+z)h^*_{\cP_G}(z^2)
\]
where $h^*_{\cP_G}(z)$ is the $h^*$-polynomial of $\cP_G$.
\end{cor}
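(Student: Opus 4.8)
The plan is to extract the $h^*$-polynomial directly from the explicit description of $\Pi(\cP_{\cW(G)})$ given in Theorem~\ref{T-whisker}, using the height characterization \eqref{e-h*}. Recall that $h^*_i(\cP) = |\{\lambda \in \Lambda(\cP) \mid \he(\lambda) = i\}|$, so I need to track the heights (the last coordinate, i.e.\ the appended $1$-coordinate) of the lattice points enumerated in \eqref{e-whisker}. The key computational input is already embedded in the proof of Theorem~\ref{T-whisker}: a point of the first type $(\lambda,\lambda)\cdot[L(n)\mid\one]$ has last coordinate $2\,\he(\lambda)$, while a point of the second type $\bigl((\lambda,\lambda)+\tfrac{1}{2n}\one\bigr)\cdot[L(n)\mid\one]$ has last coordinate $2\,\he(\lambda)+1$.

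First I would set up the bijection: as $\lambda$ ranges over $\Lambda(\cP_G)$, the first family contributes, for each $\lambda$ with $\he(\lambda)=i$, one lattice point of $\Pi(\cP_{\cW(G)})$ at height $2i$; the second family contributes, for the same $\lambda$, one point at height $2i+1$. Summing over all $\lambda$ grouped by height therefore gives
\begin{equation*}
h^*_{\cP_{\cW(G)},2i} = h^*_{i}(\cP_G) \quad\text{and}\quad h^*_{\cP_{\cW(G)},2i+1} = h^*_{i}(\cP_G)
\end{equation*}
for each $0 \le i \le n-1$, with all other coefficients zero. This is exactly the coefficient pattern of $(1+z)\,h^*_{\cP_G}(z^2)$, since substituting $z^2$ into $h^*_{\cP_G}$ places the coefficient $h^*_i(\cP_G)$ at degree $2i$, and multiplying by $(1+z)$ duplicates each such term at degree $2i+1$.

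Concretely, I would write $h^*_{\cP_G}(z) = \sum_{i=0}^{n-1} h^*_i(\cP_G)\,z^i$, so that $h^*_{\cP_G}(z^2) = \sum_i h^*_i(\cP_G)\,z^{2i}$ and
\begin{equation*}
(1+z)\,h^*_{\cP_G}(z^2) = \sum_{i=0}^{n-1} h^*_i(\cP_G)\,z^{2i} + \sum_{i=0}^{n-1} h^*_i(\cP_G)\,z^{2i+1},
\end{equation*}
and then match this term by term against the height count derived above. It is worth confirming the degree bookkeeping: $\cP_{\cW(G)}$ has dimension $2n-1$, so its $h^*$-polynomial has degree at most $2n-1$, and indeed the highest term produced is $z^{2(n-1)+1}=z^{2n-1}$, consistent with Theorem~\ref{T-whisker}.

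The only real point requiring care — the ``main obstacle,'' though it is mild — is verifying that the two families in \eqref{e-whisker} are disjoint and jointly exhaust $\Pi(\cP_{\cW(G)})\cap\Z^{2n}$, so that no lattice point is double-counted or omitted when partitioning by height. Disjointness is immediate from parity: first-family points have even last coordinate and second-family points have odd last coordinate. Exhaustiveness is already established in the proof of Theorem~\ref{T-whisker} by the cardinality count $2n\tau(G) = |\Pi(\cP_{\cW(G)})\cap\Z^{2n}|$. Hence the height distribution is determined exactly, and the claimed identity follows.
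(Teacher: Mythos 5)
Your proposal is correct and follows essentially the same route as the paper: both read off the height distribution of $\Pi(\cP_{\cW(G)})$ from the explicit description in Theorem~\ref{T-whisker}, noting that the two families of lattice points land at heights $2\,\he(\lambda)$ and $2\,\he(\lambda)+1$ respectively, which gives $h^*_{2i}(\cP_{\cW(G)}) = h^*_{2i+1}(\cP_{\cW(G)}) = h^*_i(\cP_G)$ and hence the factorization $(1+z)h^*_{\cP_G}(z^2)$. Your added remarks on disjointness by parity and exhaustiveness via the cardinality count are sound and merely make explicit what the paper leaves implicit in its citation of Theorem~\ref{T-whisker}.
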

\begin{proof}
Recall from Lemma~\ref{lem:fpp}, $h^*_{j}(\cP_{\cW(G)}) = |\Pi(\cP_{\cW(G)}) \cap \{ \vect{p} \in \Z^{2n} \mid p_{2n} = j\}|.$
Then for each $i$, \, $0 \le i \le n-1$, 
\[
h^*_{2i}(\cP_{\cW(G)}) = |\{ (\lambda, \lambda) \mid \lambda \in \Lambda(\cP_G), \he(\lambda)=i \}| = h^*_{2i+1}(\cP_{\cW(G)}) 
\]
by Theorem~\ref{T-whisker}.
Then the $h^*$-polynomial of the Laplacian simplex associated to the whiskered graph is $h^*_{\cP_{\cW(G)}}(z) = h^*_{\cP_G}(z^2) + zh^*_{\cP_G}(z^{2}).$
\end{proof}

\begin{cor}
$h^*(\cP_{\cW(G)})$ is unimodal if and only if $h^*(\cP_G)$ is unimodal.
\end{cor}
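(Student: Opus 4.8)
The plan is to reduce the statement entirely to the explicit relation $h^*_{\cP_{\cW(G)}}(z) = (1+z)\,h^*_{\cP_G}(z^2)$ established in the preceding corollary. Writing $h^*(\cP_G) = (a_0, a_1, \ldots, a_{n-1})$ and expanding $(1+z)h^*_{\cP_G}(z^2)$ shows that
\[
h^*(\cP_{\cW(G)}) = (a_0, a_0, a_1, a_1, \ldots, a_{n-1}, a_{n-1}),
\]
i.e.\ each coordinate of $h^*(\cP_G)$ is repeated twice in consecutive positions; equivalently $h^*_{2i}(\cP_{\cW(G)}) = h^*_{2i+1}(\cP_{\cW(G)}) = a_i$, which is exactly the coincidence already recorded in the proof of Theorem~\ref{T-whisker} and its corollary. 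Thus the whole claim becomes the elementary statement that a vector $(a_0, \ldots, a_m)$ is unimodal if and only if its ``doubled'' vector $b := (a_0,a_0,a_1,a_1,\ldots,a_m,a_m)$ is unimodal, and I would isolate this as a short self-contained lemma.

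For the forward direction (unimodality of $h^*(\cP_G)$ implies that of $h^*(\cP_{\cW(G)})$), I would take a peak index $j$ for $a$, so that $a_0 \le \cdots \le a_j \ge \cdots \ge a_m$, and verify directly from the definition that $b$ is nondecreasing up to position $2j+1$ and nonincreasing from there on. This is immediate: consecutive equal entries never violate a weak inequality, and the inequalities $a_i \le a_{i+1}$ (resp.\ $a_i \ge a_{i+1}$) transfer verbatim to the pairs $b_{2i} \le b_{2i+2}$ (resp.\ $b_{2i} \ge b_{2i+2}$), with the stuttered copies $b_{2i} = b_{2i+1}$ filling in the gaps harmlessly.

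The reverse direction is where the only genuine subtlety lies. Here I would start from a peak index $p$ of $b$ and propose $j := \lfloor p/2 \rfloor$ as a peak for $a$. For coordinates strictly before and strictly after $j$, the desired monotonicity of $a$ follows simply by restricting the monotonicity of $b$ to even indices $2i$. The hard part will be the transition at the peak itself when $p$ is odd: then $b_{2j}$ sits just below the peak in the nondecreasing region while $b_{2j+2}$ sits in the nonincreasing region, so the monotonicity of $b$ alone does not directly compare $a_j$ and $a_{j+1}$. The point that rescues the argument is precisely the doubling structure $b_{2j} = b_{2j+1}$: since $2j+1 = p$ is the peak, we get $a_j = b_{2j} = b_p \ge b_{p+1} = b_{2j+2} = a_{j+1}$, as required. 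Handling the parity of $p$ cleanly in this single step is the main thing to get right; everything else is a routine unwinding of the definition of unimodality.
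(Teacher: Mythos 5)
Your proposal is correct and takes essentially the same route the paper intends: the paper states this corollary without proof as an immediate consequence of the identity $h^*_{\cP_{\cW(G)}}(z) = (1+z)h^*_{\cP_G}(z^2)$, i.e.\ of the fact that $h^*(\cP_{\cW(G)})$ is the doubled vector $(a_0,a_0,\ldots,a_{n-1},a_{n-1})$, and you simply write out the elementary equivalence of unimodality for a vector and its doubling. Your treatment of the only delicate point --- the comparison $a_j \ge a_{j+1}$ when the peak index $p$ of the doubled vector is odd, resolved via $a_j = b_{2j} = b_{2j+1} = b_p$ --- is accurate.
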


\begin{rem}\label{R-whisker}
The result in Theorem~\ref{T-whisker} can extend to reveal further families of reflexive simplices. Let $\cW_2(G)$ be the graph obtained from $\cW(G)$ by whiskering the whiskers. In general, let $\cW_k(G)$ be the graph on $(k+1)n$ vertices obtained from $\cW_{k-1}(G)$.
Notice $\cW_1(G) = \cW(G)$ and $\tau(\cW_k(G))=\tau(G)$ for $k\ge 1$. The following additional results are straightforward to show. 
\begin{arabiclist}
\item If $\cP_G$ is reflexive, then $\cP_{\cW_k(G)}$ is reflexive for $k \ge 1$.
\item For reflexive $\cP_G$, the lattice points in $\Pi(\cP_{\cW_k(G)})$ arise from  
\[
\Lambda(\cP_{\cW_k(G)}) = \left\{ (\underbrace{\lambda,\lambda, \ldots, \lambda}_{k+1}) + \frac{i}{(k+1)n}\one \,\,\middle|\,\, \lambda \in \Lambda(\cP_G),\, i=0,1,\ldots,k \right\}.
\]
\item If $\cP_G$ is reflexive, then $\cP_{\cW_k(G)}$ has $h^*$-polynomial of the form
\[
h^*_{\cP_{\cW_k(G)}}(z) = h^*_{\cP_G}(z^{k+1}) \sum_{d=0}^k z^d.  
\] 
\item $h^*(\cP_{\cW_k(G)})$ is unimodal iff $h^*(\cP_G)$ is unimodal.
\end{arabiclist}
\end{rem}

\begin{rem}Although $\cP_{C_n}$ is not reflexive for even $n$~\cite[Thm. 5.1]{BM17}, $\cP_{\cW(C_n)}$ is reflexive~\cite[Prop. 5.4]{BM17}. 
Thus, whiskering $G$ not only preserves the reflexivity of $\cP_G$ but it can also be used to create a graph with a reflexive Laplacian simplex.
\end{rem}

The next family of reflexive Laplacian simplices we consider yield linear codes of odd length.
The associated graphs are an extension of whiskered complete graphs.  

\begin{defi}\label{D-star}
Define $\cW^*(K_n)$ to be the graph obtained from the whiskered complete graph by starring an additional vertex with all the whiskers in $\cW(K_n)$. Thus $|V(\cW^*(K_n))| = 2n+1$ and $|E(\cW^*(K_n))| = |E(\cW(K_n))| + n.$   
\end{defi}

\begin{lem}\label{L-star}
$\tau(\cW^*(K_n))= (2n+1)^{n-1}$.
\end{lem}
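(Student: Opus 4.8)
The plan is to compute $\tau(\cW^*(K_n))$ directly via the Matrix-tree Theorem (Theorem~\ref{T-L}(4)), which lets me read off the number of spanning trees as any cofactor of the Laplacian. First I would fix a convenient vertex ordering: the $n$ complete-graph vertices $1,\dots,n$, then the $n$ whisker vertices $n+1,\dots,2n$ (with $i+n$ attached to $i$), and finally the star center $2n+1$ adjacent to all whiskers. Reading off degrees (each $K_n$-vertex has degree $n$, each whisker has degree $2$, the center has degree $n$) and adjacencies, the Laplacian takes the block form
\[
L=\begin{pmatrix} (n+1)I_n - J_n & -I_n & \vect{0}\\ -I_n & 2I_n & -\one\T\\ \vect{0}\T & -\one & n\end{pmatrix},
\]
where $J_n$ denotes the $n\times n$ all-ones matrix. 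Since deleting the last row and column of $L$ gives a cofactor, Theorem~\ref{T-L}(4) yields $\tau(\cW^*(K_n)) = \det M$ with
\[
M=\begin{pmatrix} (n+1)I_n - J_n & -I_n\\ -I_n & 2I_n\end{pmatrix}.
\]

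Next I would evaluate $\det M$ by the Schur complement with respect to the invertible bottom-right block $2I_n$. This gives
\[
\det M = \det(2I_n)\cdot\det\!\left((n+1)I_n - J_n - (-I_n)(2I_n)^{-1}(-I_n)\right) = 2^n\det\!\left(\big(n+\tfrac12\big)I_n - J_n\right),
\]
reducing the problem to a single scalar-times-identity minus a rank-one all-ones perturbation. The eigenvalues of $J_n$ are $n$ (once, along $\one$) and $0$ (with multiplicity $n-1$), so $(n+\tfrac12)I_n - J_n$ has eigenvalues $\tfrac12$ once and $n+\tfrac12$ with multiplicity $n-1$. Hence its determinant is $\tfrac12\,(n+\tfrac12)^{n-1}$, and
\[
\det M = 2^n\cdot\tfrac12\,\big(n+\tfrac12\big)^{n-1} = 2^{n-1}\big(n+\tfrac12\big)^{n-1} = (2n+1)^{n-1}.
\]

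I do not expect a genuine obstacle here; the only delicate point is bookkeeping, namely writing down the block Laplacian correctly (in particular the degree $n$ of the $K_n$-vertices, which counts both the $n-1$ complete-graph neighbors and the attached whisker) and keeping the Schur-complement signs straight. Everything after that is the standard eigenvalue computation for $aI_n - J_n$, so the final simplification to $(2n+1)^{n-1}$ is routine. An alternative I would keep in reserve, should the block computation become awkward, is the deletion–contraction or the weighted Matrix-tree count exploiting that the whiskers are near-pendant paths through the center; but the Schur-complement route above is cleanest and I would present that.
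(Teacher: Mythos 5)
Your proof is correct and takes essentially the same route as the paper: both invoke the Matrix-tree Theorem on the same block Laplacian, compute the $(2n+1,2n+1)$ cofactor, and finish with the eigenvalues of a rank-one (all-ones) perturbation of a scalar matrix. The only cosmetic difference is that you evaluate the $2n\times 2n$ block determinant via a Schur complement against $2I_n$, arriving at $2^n\det\bigl((n+\tfrac12)I_n-J_n\bigr)$, while the paper uses a block row operation to reduce to $\det(2L_{K_n}+I_n)$; both yield $(2n+1)^{n-1}$.
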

\begin{proof}
First, note the Laplacian matrix of the complete graph on $n$ vertices is of the form
\begin{equation}\label{e-complete}
L_{K_n}= n I_n - J_n \in \Z^{n\times n}
\end{equation}
where $J_n$ is the $n\times n$ matrix of all ones.
Set $G:=\cW^*(K_n)$. It is straightforward to see that the Laplacian matrix of $G$ is of the form
\begin{equation}\label{e-Lstar}
L_G := \left(\!\!\begin{array}{c|c|c}
L_{K_n} + I_n & -I_n &\vect{0}\T \\
\hline
-I_n & 2I_n& -\one\\
\hline
\vect{0} & -\one & n
\end{array}\!\!\right)\in \Z^{(2n+1)\times(2n+1)}
\end{equation}
with $L_{K_n}$ defined in~\eqref{e-complete}.
By Theorem~\ref{T-L}(5), $\tau(G)$ equals any cofactor of $L_G$. We compute the $(2n+1,2n+1)$ cofactor. Namely, 
\[
\tau(G) = (-1)^{4n+2}\det\left(\!\!\begin{array}{c|c}
L_{K_n} + I_n & -I_n\\
\hline
-I_n & 2I_n
\end{array}\!\!\right) = 
\det\left(\!\!\begin{array}{c|c}
L_{K_n} & I_n\\
\hline
-I_n & 2I_n
\end{array}\!\!\right) = \det(2L_{K_n} + I_n).
\]
By Theorem \ref{T-L}(1), $2L_{K_n}$ is diagonalizable. In addition, it is well-known that the eigenvalues of $L_{K_n}$ are $1$ with multiplicity 1 and $n$ with multiplicity $n-1$. Thus the eigenvalues of $2L_{K_n}+I_n$ are 0 with multiplicity 1 and $2n+1$ with multiplicity $n-1$. This yields $\tau(\cW^*(K_n))=\det(2L_{K_n}+I_n) = (2n+1)^{n-1}$.
\end{proof}

\begin{theo}\label{T-reflstar}
The Laplacian simplex $\cP_{\cW^*(K_n)}$ is reflexive for all $n \ge 3$.
\end{theo}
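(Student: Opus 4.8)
The plan is to establish reflexivity through the integrality of the dual vertices, using the hyperplane description $\cP_{\cW^*(K_n)}^{\w} = \{\vect{x}\in\R^{2n}\mid L_G(2n+1)\vect{x}\T\le\one\}$ from Remark~\ref{R-dual} together with Theorem~\ref{T-reflexive}. Writing $N=2n+1$, each vertex $\vect{v}_i$ of the dual is the unique solution of the $2n\times 2n$ system $L_G(i\mid N)\vect{v}_i\T=\one$, and $\cP_{\cW^*(K_n)}$ is reflexive precisely when every $\vect{v}_i$ lies in $\Z^{2n}$ (equivalently, by Corollary~\ref{C-ref}, when $\tau=(2n+1)^{n-1}$ divides every cofactor of $[L_G(N)\mid\one]$). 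Theorem~\ref{T-L}(4) already guarantees $\det L_G(i\mid N)=\pm\tau$, so the real content of the claim is that solving each system produces no denominators.

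The key simplification is the symmetry of $G=\cW^*(K_n)$: its automorphism group acts transitively on the $n$ original $K_n$-vertices (permuting each together with its whisker) and on the $n$ whisker-vertices, while fixing the star-vertex. Since an automorphism permutes the facets of $\cP_G^{\w}$ and hence its vertices, integrality of one $\vect{v}_i$ forces it for the entire orbit. Thus it suffices to solve three systems, corresponding to $i\in\{1,\dots,n\}$, to $i\in\{n+1,\dots,2n\}$, and to $i=N$.

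In each case I would exploit the block form \eqref{e-Lstar}, in which every block is a scalar multiple of $I_n$ or of $J_n$. Eliminating the whisker-coordinates through the middle block (each equation reading $-a_\ell+2b_\ell=1$) and the star-coordinate through the bottom row ($\sum_\ell b_\ell=-1$) collapses the top block to a scalar relation governed by $(n+1)I_n-J_n$ and $(2n+1)I_n-2J_n$, both of which fix $\one$ since $J_n\one=n\one$. This forces the remaining coordinates to be constant on each orbit, with a single exceptional coordinate coming from the deleted row, and the resulting scalars turn out to be small integers; for instance the case $i=N$ yields $\vect{v}_N=(3,\dots,3,2,\dots,2)$, and the other two representatives similarly produce integer vectors. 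Assembling these three computations and invoking the orbit argument gives reflexivity for all $n\ge3$.

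The main obstacle, and the reason I would solve the systems explicitly rather than bound cofactors through Corollary~\ref{C-ref}, is the divisibility by the large factor $\tau=(2n+1)^{n-1}$. A rank-mod-$p$ estimate shows that $L_G(N)$ drops enough rank modulo $p$ to force $p^{n-1}$ into each cofactor, but when $2n+1$ is not squarefree (e.g.\ $n=4$, where $2n+1=9$) one needs the full power $p^{e(n-1)}$ for each prime power $p^e\,\|\,2n+1$, which such a crude rank count does not deliver. Working directly with the orbit-reduced linear systems sidesteps this $p$-adic bookkeeping entirely and treats prime and composite $2n+1$ uniformly.
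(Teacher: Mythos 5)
Your argument is correct and lands on exactly the same object as the paper: the integer matrix whose rows are $(-3,-1,\dots,-1,-1,0,\dots,0)$, $(0,1,\dots,1,-n,1,\dots,1)$ and $(3,\dots,3,2,\dots,2)$ (up to the $S_n$-orbit) is precisely the matrix $A$ the paper exhibits. The difference is one of direction. The paper \emph{guesses} $A$ and verifies it in a single block computation, $A\cdot L_G(2n+1)\T=J_{2n+1}-(2n+1)I_{2n+1}$, which by Remark~\ref{R-hyperplane} simultaneously certifies the hyperplane description and the integrality of the dual vertex matrix. You instead \emph{derive} the dual vertices by solving the systems $L_G(i\mid 2n+1)\vect{v}_i\T=\one$, using the $S_n$-symmetry of $\cW^*(K_n)$ (which fixes the star vertex and acts transitively on the $K_n$-vertices and on the whiskers, hence permutes the dual vertices within orbits) to reduce to three representative eliminations; the block structure of \eqref{e-Lstar} makes each one a two-line computation, e.g.\ $(L_{K_n}+\tfrac12 I_n)\vect{a}\T=\tfrac32\one$ forces $\vect{a}=3\cdot\one$ in the $i=2n+1$ case. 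Both are complete proofs; the paper's verification is shorter once $A$ is known, while your derivation explains where $A$ comes from and, via the orbit argument, scales to similar constructions where guessing the dual vertex matrix is less obvious. Your closing remark is also well taken: a rank-mod-$p$ count toward Corollary~\ref{C-ref} would indeed founder when $2n+1$ is not squarefree, so solving the systems (or verifying $A$) is the right move.
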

\begin{proof}
The claim is $\cP_{\cW^*(K_n)} = \{\vect{x}\in \R^{2n} \mid A \vect{x}\T \le \one \}$, where $A$ is an integer matrix of the form
\[A= \left(\!\!\begin{array}{c|c}
-J_n-2I_n & -I_n \\
\hline
J_n - I_n & J_n - (n\!+\!1)I_n \\
\hline
3\cdot \one & 2 \cdot \one 
\end{array}\!\!\right) \in \Z^{(2n+1) \times 2n}, \]
and $J_n$ is the $n\times n$ matrix of all ones. 
By Remark~\ref{R-hyperplane}, 
it is sufficient to show each vertex of $\cP_{\cW^*(K_n)}=\conv{\vect{v}_1,\ldots,\vect{v}_{2n+1}}$ satisfies $A(i \mid \emptyset)\vect{v}_i\T=\one$ and $A_i \vect{v}_i\T < 1$, where $A_i$ is the $i\ss{th}$ row of $A$.
Recall $\vect{v}_i$ is the $i\ss{th}$ row of $L_G(2n+1)$, where $L_G$ is given above in \eqref{e-Lstar}.
Then we consider
\begin{equation*}
\begin{split}
A \cdot L_G(2n+1)\T &=
\left(\!\! \begin{array}{c|c}
-J_n-2I_n & -I_n \\
\hline
J_n - I_n & J_n - (n\!+\!1)I_n \\
\hline
3\cdot \one & 2\cdot \one 
\end{array}\!\!\right)
\left(\!\!\begin{array}{c|c|c}
L_{K_n} + I_n & -I_n &\vect{0} \\
\hline
-I_n & 2I_n& -\one\\
\end{array}\!\!\right) \\
&=
J_{2n+1} - (2n+1)I_{2n+1}, 
\end{split}
\end{equation*}
which shows the claim.
Since $A$ is the vertex matrix of $\cP^{\w}_{\cW^*(K_n)}$, the result follows. 
\end{proof}
\begin{rem} To guarantee reflexivity, the construction in Definition \ref{D-star} applies specifically to complete graphs. This boils down to the fact that $\cP_{K_n}$ has maximum volume among all Laplacian simplices associated to simple connected graphs with $n$ vertices. For instance, consider $\cP_{C_5}$, which is reflexive by Theorem~\ref{T-oddcycle}. Then one computes
\[h^*(\cP_{\cW^*(C_5)}) = (1, 1, 16, 156, 1491, 3831, 3771, 1176, 126, 1, 1),\]
which in turn implies that $\cP_{\cW^*(C_5)}$ is not reflexive.
\end{rem}


\begin{theo}\label{T-star}
Let $G:= \cW^*(K_n)$.
Then $\frac{1}{2n+1}\vect{x} \in \Lambda(\cP_G)$ iff the following hold:
\begin{arabiclist}
\item $0 \le x_i \le 2n$.
\item For each $n+1\leq i \leq 2n$ there exists $k_i\in \Z$ such that $x_i = (n+1)x_{n-i} - \sum_{j=1}^n x_j - (2n+1)k_i$.
\item For each $j\in [n]$ there exists $m_j\in \Z$ such that $x_{2n+1}=2x_{j+n} - x_j - (2n+1)m_j$.
\end{arabiclist}
\end{theo}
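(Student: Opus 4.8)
The plan is to reduce the statement to a single matrix congruence via reflexivity, expand that congruence coordinate-by-coordinate, and check that conditions (2) and (3) capture all of it. Write $N:=2n+1$ for the number of vertices of $G=\cW^*(K_n)$; by Lemma~\ref{L-star} we have $\tau(G)=N^{n-1}$, so $N\tau=N^{n}$. Since $\cP_G$ is reflexive by Theorem~\ref{T-reflstar}, the sharpened description in Theorem~\ref{T-kappadivides} applies and gives
\[
\Lambda(\cP_G)=\left\{\frac{\vect{x}}{N}\,\middle|\,\vect{x}\in\Z^{N},\ 0\le x_i<N,\ \vect{x}\cdot[L_G(N)\mid\one]\equiv\vect{0}\bmod N\right\}.
\]
The box constraint $0\le x_i<N=2n+1$ is exactly condition (1), so the task reduces to showing the vector congruence is equivalent to conditions (2) and (3).

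First I would write out the matrix explicitly. Deleting the last column of $L_G$ in~\eqref{e-Lstar}, appending $\one$, and using $L_{K_n}+I_n=(n+1)I_n-J_n$ from~\eqref{e-complete}, one gets
\[
[L_G(N)\mid\one]=\left(\begin{array}{c|c|c}(n+1)I_n-J_n & -I_n & \one \\ \hline -I_n & 2I_n & \one \\ \hline \vect{0} & -\one & 1\end{array}\right).
\]
Splitting $\vect{x}=(\vect{a},\vect{b},c)$ with $\vect{a}=(x_1,\dots,x_n)$, $\vect{b}=(x_{n+1},\dots,x_{2n})$, $c=x_{2n+1}$, I would compute $\vect{x}\cdot[L_G(N)\mid\one]$ block by block. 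The first $n$ coordinates yield, for each $j\in[n]$, the entry $(n+1)x_j-\sum_{i=1}^n x_i-x_{j+n}$; the middle $n$ coordinates yield $2x_{j+n}-x_j-x_{2n+1}$; and the last coordinate yields $\sum_{i=1}^{N}x_i$. Setting each block $\equiv\vect{0}\bmod N$ reproduces, with the reindexing $i=j+n$ so that the core vertex is $x_{i-n}$, precisely condition (2), then condition (3), and finally the extra scalar congruence $\sum_{i=1}^{N}x_i\equiv0\bmod N$.

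The one point requiring care---and the main obstacle---is that this extra scalar congruence is absent from the statement, so I must show it is \emph{redundant} given (2) and (3). I would do this by summation. Writing $S:=\sum_{j=1}^n x_j$ and $T:=\sum_{i=n+1}^{2n}x_i$, summing the $n$ relations of condition (2) over $i=n+1,\dots,2n$ collapses the $J_n$-term and gives $T\equiv S\bmod N$; summing the $n$ relations of condition (3) over $j\in[n]$ gives $n\,x_{2n+1}\equiv 2T-S\equiv S\bmod N$, hence $S\equiv n\,x_{2n+1}\bmod N$. Substituting both into $\sum_{i=1}^{N}x_i=S+T+x_{2n+1}$ yields $\sum_{i=1}^{N}x_i\equiv 2S+x_{2n+1}\equiv(2n+1)x_{2n+1}\equiv0\bmod N$, confirming redundancy.

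With the redundancy established, the vector congruence is equivalent to conditions (2) and (3), and together with the box constraint (1) this proves both directions: membership forces (1)--(3) by the block computation, and (1)--(3) force membership since they imply every coordinate of $\vect{x}\cdot[L_G(N)\mid\one]$ vanishes modulo $N$. Apart from this redundancy check, the only real work is the careful bookkeeping of the block multiplication; everything else is immediate from Theorems~\ref{T-reflstar} and~\ref{T-kappadivides}.
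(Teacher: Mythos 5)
Your proposal is correct and follows essentially the same route as the paper: reduce to the congruence $\vect{x}\cdot[L_G(2n+1)\mid\one]\equiv\vect{0}\bmod(2n+1)$ (via reflexivity and Theorem~\ref{T-kappadivides}), expand it column block by column block to obtain conditions (2) and (3), and observe that the congruence coming from the appended $\one$ column is automatic. The only cosmetic difference is that you establish that redundancy by summing the relations, and thereby get the biconditional directly, whereas the paper substitutes the derived expressions into $\vect{x}\cdot\one$ and then closes with a counting argument against $\Vol(\cP_G)=(2n+1)^n$.
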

Note that the conditions above come from solving $\vect{x}L_G(2n+1)\equiv \vect{0}$ (where $L_G$ is as in \eqref{e-Lstar}). This system of linear modular equations has $n$ free variables, which we conveniently pick to be the first $n$. That is, $x_i\in \{0,\ldots, 2n\}$ for $i\in [n]$, and for $n+1\leq i \leq 2n+1$, $x_i$ is uniquely determined by the first $n$ of $x_i$'s.

\emph{Proof of Theorem} \ref{T-star}. We show $\vect{x} \cdot [L_G(2n+1) \mid \one] \equiv \vect{0} \bmod (2n+1)$ for $L_G$ defined in \eqref{e-Lstar} iff $\vect{x}$ satisfies the above equations.
Let $\vect{c}_i$ denote the $i\ss{th}$ column of $L_G$.
Fix $\{x_i\}_{i=1}^n \in \{0,1,\ldots,2n\}$.
Then for each $i \in [n],$ $\vect{x} \cdot \vect{c}_i \equiv \vect{0} \bmod (2n+1)$ implies that there exists $\ell_i \in \Z$ such that
\begin{equation}\label{e-star1}
x_{i+n} = (n+1)x_i - \sum_{j=1}^n x_j - (2n+1)\ell_i,
\end{equation}
and $0 \le x_{i+n} \le 2n$.
Also for each $i \in [n], \vect{x} \cdot \vect{c}_{i+n} \equiv \vect{0} \bmod (2n+1)$ implies there exists $m_i \in \Z$ such that
\begin{equation}\label{e-star2}
x_{2n+1} = 2x_{i+n} - x_i - (2n+1)m_i,
\end{equation}
and $0 \le x_{2n+1} \le 2n$.
Observe $x_{2n+1}$ satisfies  
\begin{equation}\label{e-star3}
x_{2n+1} = (2n+1)x_i - 2 \sum_{j=1}^n x_j - (2n+1)(2\ell_i-m_i) \equiv \left(-2 \sum_{j=1}^n x_j\right) \bmod (2n+1).
\end{equation}
Set $x_{2n+1}$ to be the unique integer that satisfies $0 \le x_{2n+1} \le 2n$ and \eqref{e-star3}.
Now it is left to check $\vect{x} \cdot \one \equiv \vect{0} \bmod (2n+1)$.
We have
\begin{equation*}
\begin{split}
\vect{x} \cdot \one &= \sum_{i=1}^{2n+1} x_i \\
&= \sum_{i=1}^n x_i + \sum_{i=1}^n \left( (n+1)x_i - \sum_{j=1}^n x_j - (2n+1) \ell_i \right) + x_{2n+1} \\
&\equiv (1+n+1-n-2)\sum_{j=1}^n x_j -(2n+1)\sum_{i=1}^n \ell_i \bmod (2n+1)  \\
&\equiv 0 \bmod (2n+1).
\end{split}
\end{equation*}
It is left to show these are all the vectors in $\Lambda(\cP_G)$. 
For each $i \in [n]$, there are $2n+1$ choices for $x_i$.
The coordinates $x_{i+n}$ and $x_{2n+1}$ are uniquely determined by the above equivalences. 
Then the number of $\vect{x}$ that satisfy Theorem~\ref{T-star} is $(2n+1)^n$.
Indeed, using Lemma~\ref{L-star} we see this is equal to $|\Lambda(\cP_G)| = \Vol(\cP_G) = (2n+1)^n$ as computed from Theorem~\ref{properties}(1).\QED
\begin{rem}\label{R-whiskerstar}
Similar to Remark~\ref{R-whiskerstar}, the result in Theorem~\ref{T-star} can extend to reveal further families of reflexive simplices when we consider $\cW^*_k(K_n)$, the graph on $(k+1)n+1$ vertices with $\cW_k(K_n)$ defined in Remark~\ref{R-whiskerstar}. The following can be shown.
\begin{arabiclist}
\item The simplex $\cP_{\cW^*_k(K_n)}$ is reflexive for all $k\ge 1$.
\item $\tau(\cW^*_k(K_n)) = ((k+1)n+1)^{n-1}.$ 
\item The description of $\Lambda(\cP_{\cW^*_k(K_n)})$ is similar to Theorem~\ref{T-star}. Each $\lambda=\frac{\vect{x}}{(k+1)n+1} \in \Lambda(\cP_{\cW^*_k(K_n)})$ is uniquely determined by any choice of the coordinates $x_i \in \{0, \ldots, (k+1)n\}$ for $i \in [n]$. 
\end{arabiclist}
\end{rem}


The next graphical construction we consider is attaching two graphs together with an edge to form a bridge. 

\begin{defi}
Let $G$ and $G'$ be graphs with vertex set $[n]$. The \emph{bridge} between $G$ and $G'$, denoted $\cB(G,G')$, is the graphs with vertex set $V(G) \cup V(G')$ and edge set $E(G) \cup E(G') \cup \{i,j\}$ where $i\in V(G)$ and $j \in V(G')$. 
\end{defi}

The resulting simplex satisfies $\dim{\cP_{\cB(G,G')}}=\dim{\cP_G} + \dim{\cP_{G'}}.$ We consider the bridge of two graphs with the same number of vertices and present a sufficient condition for the bridge to produce a reflexive Laplacian simplex. 

\begin{theo}\label{T-fppbridge}
Let $G'$ and $G''$ be graphs with vertex set $[n]$ such that $\cP_{G'}$ and $\cP_{G''}$ are reflexive. The lattice points in $\Pi(\cP{_{\cB(G',G'')}})$ are of the form
\begin{equation}\label{e-bridge1}
(\lambda', \lambda'') \cdot [L(2n) \mid \one]
\end{equation}
where $\lambda' \in \Lambda(\cP_{G'}), \lambda'' \in \Lambda(\cP_{G''}), \lambda'_n = \lambda''_n$, and of the form 
\begin{equation}\label{e-bridge2}
\left((\lambda', \lambda'') + \frac{1}{2n}\one\right) \cdot [L(2n) \mid \one]
\end{equation}
with $\lambda',\lambda''$ as above.
\end{theo}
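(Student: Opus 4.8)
The plan is to mirror the proof of Theorem~\ref{T-whisker}: first exhibit the vectors in \eqref{e-bridge1} and \eqref{e-bridge2} as genuine lattice points of $\Pi(\cP_{\cB(G',G'')})$, and then check that their number equals $\Vol(\cP_{\cB(G',G'')})=|\Lambda(\cP_{\cB(G',G'')})|$, so that they exhaust it. After relabeling the vertices of each factor---which leaves $\Lambda(\cP_{G'})$ and $\Lambda(\cP_{G''})$ unchanged by Remark~\ref{R-uni}---I may assume the bridge edge joins vertex $n$ of $G'$ to vertex $n$ of $G''$. On the combined vertex set $[2n]$ the Laplacian of $\cB(G',G'')$ is then the block matrix
\[
L=\left(\begin{array}{c|c}
L_{G'}+\vect{e}_n\T\vect{e}_n & -\vect{e}_n\T\vect{e}_n\\
\hline
-\vect{e}_n\T\vect{e}_n & L_{G''}+\vect{e}_n\T\vect{e}_n
\end{array}\right),
\]
where $\vect{e}_n\in\Z^n$ is the $n$\ss{th} standard row vector; forming $[L(2n)\mid\one]$ deletes the column indexed by the bridge endpoint of $G''$.

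For the forward inclusion, fix $\lambda'\in\Lambda(\cP_{G'})$ and $\lambda''\in\Lambda(\cP_{G''})$ with $\lambda'_n=\lambda''_n$ and put $\mu=(\lambda',\lambda'')\in[0,1)^{2n}$. Block multiplication gives
\[
\mu L=\bigl(\lambda'L_{G'}+(\lambda'_n-\lambda''_n)\vect{e}_n,\ \lambda''L_{G''}+(\lambda''_n-\lambda'_n)\vect{e}_n\bigr),
\]
so the hypothesis $\lambda'_n=\lambda''_n$ is exactly what cancels the correction terms and forces $\mu L=(\lambda'L_{G'},\lambda''L_{G''})\in\Z^{2n}$, using $\lambda'L_{G'},\lambda''L_{G''}\in\Z^n$ from Remark~\ref{R-uni} (the redundant constraint coming from column $2n$ is simply dropped). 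Since $\mu\cdot\one=\he(\lambda')+\he(\lambda'')\in\Z$, the vector $\mu\cdot[L(2n)\mid\one]$ lies in $\Pi(\cP_{\cB(G',G'')})\cap\Z^{2n}$, giving \eqref{e-bridge1}. For \eqref{e-bridge2} set $\mu'=\mu+\tfrac1{2n}\one$; every column of $L$ sums to $0$ by Theorem~\ref{T-L}(2), so $\mu'L=\mu L$ is still integral, while $\mu'\cdot\one=\he(\lambda')+\he(\lambda'')+1\in\Z$. Reflexivity of $\cP_{G'}$ and $\cP_{G''}$ puts every coordinate of $\lambda',\lambda''$ in $\{0,\tfrac1n,\dots,\tfrac{n-1}n\}$ by Theorem~\ref{T-kappadivides}, whence each coordinate of $\mu'$ lies in $[\tfrac1{2n},\tfrac{2n-1}{2n}]\subseteq[0,1)$ and $\mu'$ indeed indexes a point of $\Pi(\cP_{\cB(G',G'')})$.

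It remains to count. The bridge edge lies in every spanning tree, so $\tau(\cB(G',G''))=\tau(G')\tau(G'')=:\tau'\tau''$ and $\Vol(\cP_{\cB(G',G'')})=2n\,\tau'\tau''$ by Theorem~\ref{properties}(1). The points produced are pairwise distinct: $[L(2n)\mid\one]$ is invertible over $\Q$ (its determinant is $\Vol(\cP_{\cB(G',G'')})\neq0$ by Lemma~\ref{lem:fpp}), so $\nu\mapsto\nu\cdot[L(2n)\mid\one]$ is injective, while the indexing vectors for \eqref{e-bridge1} have all coordinates in $\tfrac1n\Z$ and those for \eqref{e-bridge2} in $\tfrac1{2n}+\tfrac1n\Z$, two disjoint sets. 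The crux is to count the admissible pairs: the last-coordinate map $\lambda'\mapsto\lambda'_n$ is a homomorphism $\Lambda(\cP_{G'})\to\tfrac1n\Z/\Z$, and it is surjective since $\tfrac kn\one\in\Lambda(\cP_{G'})$ for $0\le k\le n-1$ by Theorem~\ref{properties}(3); hence every value is attained by exactly $|\Lambda(\cP_{G'})|/n=\tau'$ elements, and likewise $\tau''$ for $G''$. Summing over the $n$ shared values yields $n\,\tau'\tau''$ pairs with $\lambda'_n=\lambda''_n$, so \eqref{e-bridge1} and \eqref{e-bridge2} together give $2n\,\tau'\tau''=\Vol(\cP_{\cB(G',G'')})$ distinct lattice points---necessarily all of them. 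I expect this fiber-counting to be the main obstacle; everything else is either a direct block computation or a range check, whereas the equal-fiber property is what makes the cardinalities match on the nose.
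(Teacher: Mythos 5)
Your proposal is correct and follows essentially the same route as the paper's proof: exhibit the points of \eqref{e-bridge1} and \eqref{e-bridge2} as lattice points of the fundamental parallelepiped via the block structure of the bridged Laplacian, then count $2n\tau'\tau''$ of them and match this against $\Vol(\cP_{\cB(G',G'')})$. Your fiber-counting argument (the last-coordinate map as a surjective homomorphism onto $\tfrac1n\Z/\Z$ with equal fibers of size $\tau'$) is a welcome elaboration of a step the paper only justifies by citing $\one/n\in\Lambda(\cP_G)$, but it is the same underlying idea.
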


\begin{proof}
Label the vertices of $\cB(G',G'')$ with $[2n]$ such that vertex $n \in V(G')$ is bridged with vertex $2n \in V(G'')$.
Let $L', L''$ be the Laplacian matrices of $G',G''$.
The Laplacian matrix of $\cB(G',G'')$ takes the form 
\[
L(2n) =
\left(\!\!\begin{array}{c|c|c}
& & \\
L'(n) & \vect{c} & 0 \\
& & \\
\hline
& & \\
0 & \vect{c} & L''(n) \\
& &
\end{array}\!\!\right)
\]
where $\vect{c}\T \in \Z^{2n}$ is of the form $(\vect{c}', \underbrace{0, \ldots, 0}_{n})\T + (\underbrace{0, \ldots, 0, 1}_{n}, \underbrace{0, \ldots, 0, -1}_{n})\T$ and $\vect{c}'\T \in \Z^n$ is the $n\ss{th}$ column of $L'$. 
The claim is that \eqref{e-bridge1} is of the form
\[
(\lambda',\lambda'') \cdot [L(2n) \mid \one] = \left(\underbrace{\lambda' \cdot L'}_{n}, \, \underbrace{\lambda'' \cdot L''(n)}_{n-1}, \, \sum_{i=1}^{n} (\lambda'_i + \lambda''_i) \right) \in \Pi(\cP_{\cB(G',G'')}) \cap \Z^{2n}.
\]
First observe the last coordinate is an integer since $\lambda' \in \Lambda(\cP_{G'})$ and $\lambda'' \in \Lambda(\cP_{G''})$.
This also shows $\lambda' \cdot L'(n), \, \lambda'' \cdot L''(n) \in \Z^n.$
Finally, $\lambda'_n = \lambda''_n$ implies $(\lambda',\lambda'') \cdot \vect{c} = \lambda' \cdot \vect{c}'.$ This is an integer because $\lambda' \cdot L' \in \Z^n$ from Remark~\ref{R-uni}.  
This shows the claim.

Since the entries of each column of $L(2n)$ sum to $0$, the first $2n-1$ coordinates of \eqref{e-bridge2} are equal to the first $2n-1$ coordinates of \eqref{e-bridge1}, which are integer.
The last coordinate of \eqref{e-bridge2} is $\sum_{i=1}^n \left( \lambda'_i + \lambda''_i + \frac{1}{n} \right) \in \Z$.  
Finally, $\lambda = (\lambda',\lambda'') + \frac{1}{2n} \one \in \Lambda(\cP_{\cB(G',G'')})$ since $\lambda$ satisfies $0 < \lambda_i \le \frac{n-1}{n} + \frac{1}{2n} = \frac{2n-1}{2n}$ for each $i \in [2n]$. 

Let $\tau', \tau''$ be the number of spanning trees of $G', G''$.
The number of lattice points that \eqref{e-bridge1} yields is 
\[
|\{(\lambda',\lambda'') \in \Lambda(\cP_{G'}) \times \Lambda(\cP_{G''}) \mid \lambda'_n = \lambda''_n\}| = \frac{|\Lambda(\cP_{G'})|\cdot |\Lambda(\cP_{G''})|}{n}=n\tau'\tau'',
\]
where the first equality follows from the fact that for any Laplacian simplex $\cP_G$, $\one/n \in \Lambda(\cP_G)$.
Moreover, \eqref{e-bridge2} yields $n\tau'\tau''$ additional lattice points as there is a clear bijection between lattice points of \eqref{e-bridge1} and lattice points of \eqref{e-bridge2}. 

It is left to show these are all the lattice points in $\Pi(\cP_{\cB(G',G'')})$.
Indeed, the number of spanning trees of $\cB(G',G'')$ is equal to $\tau' \tau''$.
Thus $|\Pi(\cP_{\cB(G',G'')}) \cap \Z^{2n}| =\Vol(\cP_{\cB(G',G'')})= 2n\tau' \tau''$ from Theorem~\ref{properties}(1).
\end{proof}

\begin{lem}\label{L-bijection}
Let $G$ be a graph with vertex set $[n]$ such that $\cP_G$ is reflexive. 
The map 
\[
f: \Lambda(\cP_G) \longrightarrow \Lambda(\cP_G), \,\lambda \longmapsto \frac{n-1}{n}\one - \lambda,
\] is bijective.
Clearly, $\he(\lambda) =i$ iff $\he(f(\lambda)) = n-1-i$. As an immediate consequence, the restriction $f_i : \{ \lambda \in \Lambda(\cP_G) \mid \he(\lambda) = i \} \longrightarrow \{ \lambda \in \Lambda(\cP_G) \mid \he(\lambda) = n-1-i\}$ is bijective\footnote{The existence of a bijection is clear from Theorem~\ref{T-Hibi}. This specific bijection will be used in later results.} for each $i$, $0 \le i \le n-1$. 
\end{lem}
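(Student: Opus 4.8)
The plan is to pass to the integer-vector description of $\Lambda(\cP_G)$ afforded by reflexivity. Since $\cP_G$ is reflexive, Theorem~\ref{T-kappadivides} represents each $\lambda \in \Lambda(\cP_G)$ uniquely as $\lambda = \vect{x}/n$ with $\vect{x}\in\Z^n$, $0\le x_i < n$, and $\vect{x}\cdot[L(n)\mid\one]\equiv\vect{0}\bmod n$. Writing $\vect{y} := (n-1)\one - \vect{x}$, we have $f(\lambda) = \vect{y}/n$, and the whole argument reduces to tracking $\vect{y}$.

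First I would verify that $f$ is well-defined, i.e.\ that $f(\lambda)\in\Lambda(\cP_G)$. The coordinate bound is immediate: $0\le x_i\le n-1$ gives $0\le y_i\le n-1$, so $0\le y_i<n$ and no reduction of fractional parts is needed (in particular the ordinary subtraction defining $f$ already lands in the standard representatives). The substantive point is the modular condition. By linearity, $\vect{y}\cdot[L(n)\mid\one] = (n-1)\,\one\cdot[L(n)\mid\one] - \vect{x}\cdot[L(n)\mid\one]$, and the second term is $\equiv\vect{0}\bmod n$ by hypothesis. For the first term, Theorem~\ref{T-L}(2) says every column of $L$ sums to $0$, so $\one\cdot L(n)=\vect{0}$, while the appended column gives $\one\cdot\one=n$; hence $\one\cdot[L(n)\mid\one]=(\vect{0},n)$ and $(n-1)\,\one\cdot[L(n)\mid\one]=(\vect{0},n(n-1))\equiv\vect{0}\bmod n$. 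Thus $\vect{y}\cdot[L(n)\mid\one]\equiv\vect{0}\bmod n$, and $f(\lambda)\in\Lambda(\cP_G)$.

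Bijectivity then comes for free: $f$ is an involution, since $f(f(\lambda)) = \frac{n-1}{n}\one - \big(\frac{n-1}{n}\one - \lambda\big) = \lambda$ (again the intermediate coordinates stay in $[0,1)$, so this is literal subtraction), and an involution is its own inverse. For the height statement I would compute $\he(f(\lambda)) = \sum_{i=1}^n \frac{(n-1)-x_i}{n} = (n-1) - \frac{1}{n}\sum_{i=1}^n x_i = (n-1) - \he(\lambda)$, giving $\he(\lambda)=i \iff \he(f(\lambda))=n-1-i$ at once. Finally, the restriction $f_i$ is a well-defined map into the height-$(n-1-i)$ level by this identity; it is injective as a restriction of the injective $f$, and surjective because for any $\mu$ with $\he(\mu)=n-1-i$ the element $f(\mu)$ has height $i$ and satisfies $f(f(\mu))=\mu$.

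The proof is essentially free of obstacles; the only step with content is the well-definedness check, and within it the single delicate computation is that $(n-1)\,\one\cdot[L(n)\mid\one]\equiv\vect{0}\bmod n$, which hinges on the column-sum property of $L$ together with the congruence $n(n-1)\equiv 0\bmod n$. Everything else is formal once the reflexive description of Theorem~\ref{T-kappadivides} is in hand.
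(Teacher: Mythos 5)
Your proof is correct and follows essentially the same route as the paper: the only substantive step in both is verifying that $f(\lambda)\in\Lambda(\cP_G)$, which in each case comes down to the computation $\frac{n-1}{n}\one\cdot[L(n)\mid\one]=(\vect{0},n-1)$ using the vanishing column sums of $L$. Your version merely phrases this in terms of the integer representatives $\vect{x}$ from Theorem~\ref{T-kappadivides} and spells out the involution and height bookkeeping, which the paper leaves implicit.
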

\begin{proof}
For $\lambda \in \Lambda(\cP_G)$ set $\widetilde{\lambda}:=f(\lambda)$. We only need to show that $\widetilde{\lambda}\in \Lambda(\cP(G)$.
To that end, observe that $\lambda \cdot [L(n) \mid \one] = \vect{p}_{\lambda} \in \Z^n$ implies $\widetilde{\lambda} \cdot [L(n) \mid \one] = (0, \ldots, 0, n-1) - \vect{p}_{\lambda} \in \Z^n.$ The statement now follows.
\end{proof}

\begin{theo}\label{T-bridge}
Let $G'$ and $G''$ be graphs with vertex set $[n]$ such that $\cP_{G'}$ and $\cP_{G''}$ are reflexive. Then $\cP_{\cB(G',G'')}$ is reflexive.
\end{theo}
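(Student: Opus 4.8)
The plan is to prove reflexivity by verifying Hibi's symmetry criterion (Theorem~\ref{T-Hibi}) for the $h^*$-vector of $\cP_{\cB(G',G'')}$. By Theorem~\ref{T-fppbridge} we have a complete and explicit description of the lattice points of $\Pi(\cP_{\cB(G',G'')})$: they come in two families, those of height $2\,\he(\lambda') = 2\,\he(\lambda'')$-type built from pairs $(\lambda',\lambda'')$ with $\lambda'_n = \lambda''_n$, and their shifted counterparts obtained by adding $\frac{1}{2n}\one$, whose heights are exactly one larger. Since $\dim \cP_{\cB(G',G'')} = 2n-1$ is odd, I would aim to show $h^*_k = h^*_{2n-1-k}$ for all $k$.

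First I would compute the height of a lattice point arising from \eqref{e-bridge1}. From the last coordinate $\sum_{i=1}^n(\lambda'_i + \lambda''_i)$ shown in the proof of Theorem~\ref{T-fppbridge}, the height of $(\lambda',\lambda'')\cdot[L(2n)\mid\one]$ equals $\he(\lambda') + \he(\lambda'')$; call this value $j$. The shifted point from \eqref{e-bridge2} then has height $j+1$. So the $h^*$-vector is determined by counting, for each $j$, the number of admissible pairs $(\lambda',\lambda'')$ with $\lambda'_n=\lambda''_n$ and $\he(\lambda')+\he(\lambda'')=j$; this count contributes to both $h^*_{?}$ at an even-type height and its $+1$ shift. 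The key step is to set up the symmetry bijection. I would apply the map $f$ from Lemma~\ref{L-bijection} coordinatewise to each factor: send $(\lambda',\lambda'') \mapsto (f(\lambda'), f(\lambda''))$. By Lemma~\ref{L-bijection} this is a bijection $\Lambda(\cP_{G'})\times\Lambda(\cP_{G''}) \to \Lambda(\cP_{G'})\times\Lambda(\cP_{G''})$ sending heights $(\he(\lambda'),\he(\lambda''))$ to $(n-1-\he(\lambda'),\,n-1-\he(\lambda''))$, hence total height $j \mapsto 2(n-1)-j$.

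The point I expect to require the most care is checking that this bijection respects the constraint $\lambda'_n = \lambda''_n$ and correctly pairs the two families of lattice points so as to yield the exact symmetry $h^*_k = h^*_{2n-1-k}$. For the constraint: since $f(\lambda)_n = \frac{n-1}{n} - \lambda'_n$ in each factor, $\lambda'_n = \lambda''_n$ immediately gives $f(\lambda')_n = f(\lambda'')_n$, so the admissibility condition is preserved. For the matching of families, I would track heights precisely. An unshifted point of height $j$ maps under $f$ to an unshifted point of height $2(n-1)-j = 2n-2-j$, while a shifted point of height $j+1$ maps to a shifted point of height $2n-1-j$. Thus the bijection identifies the lattice points of height $k$ with those of height $2n-1-k$: unshifted points (even-type heights) pair with unshifted points, and shifted points pair with shifted points, and in each case the heights sum to $2n-1 = \dim\cP_{\cB(G',G'')}$. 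This establishes $h^*_k(\cP_{\cB(G',G'')}) = h^*_{2n-1-k}(\cP_{\cB(G',G'')})$ for all $k$.

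Finally I would invoke Theorem~\ref{T-Hibi}: since $\cP_{\cB(G',G'')}$ contains the origin in its interior by Theorem~\ref{properties}(2) and its $h^*$-vector is symmetric, $\cP_{\cB(G',G'')}$ is reflexive. An alternative, perhaps cleaner, route would bypass $h^*$-counting entirely and instead produce the hyperplane description directly: using Theorem~\ref{T-fppbridge}'s block form for $L(2n)$ together with Theorem~\ref{T-reflexive} applied to $G'$ and $G''$ separately, one could assemble an integer matrix $A$ with $\cP_{\cB(G',G'')} = \{\vect{x} \mid A\vect{x}\T \le \one\}$ and invoke the reflexivity criterion following Remark~\ref{R-dual}; the obstacle there is handling the bridging column $\vect{c}$ cleanly, which is why I favor the $h^*$-symmetry argument that leverages the already-established Theorem~\ref{T-fppbridge}.
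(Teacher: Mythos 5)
Your overall strategy is exactly the paper's: describe $\Pi(\cP_{\cB(G',G'')})$ via Theorem~\ref{T-fppbridge}, apply the involution $f$ of Lemma~\ref{L-bijection} coordinatewise to pairs $(\lambda',\lambda'')$, and conclude reflexivity from Hibi's criterion (Theorem~\ref{T-Hibi}). You also correctly verify the one point the paper leaves implicit, namely that $f\times f$ preserves the admissibility constraint $\lambda'_n=\lambda''_n$ because $f(\lambda)_n=\frac{n-1}{n}-\lambda_n$. However, your final bookkeeping step is wrong as stated. Writing $a_j$ for the number of admissible pairs of total height $j$ and $b_k$ for the number of shifted points of height $k$, your bijection gives $a_j=a_{2n-2-j}$ and $b_{j+1}=a_j$; so unshifted points pair with unshifted points at heights summing to $2n-2$, and shifted with shifted at heights summing to $2n$. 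Neither sum is $2n-1=\dim\cP_{\cB(G',G'')}$, so the claim that ``in each case the heights sum to $2n-1$'' is false, and the asserted identification of heights $k$ and $2n-1-k$ does not follow from the pairing you describe.

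The gap is easily closed, but it requires crossing the two families rather than keeping them separate: since $h^*_k=a_k+b_k=a_k+a_{k-1}$ and $a_j=a_{2n-2-j}$, one gets $h^*_{2n-1-k}=a_{2n-1-k}+a_{2n-2-k}=a_{k-1}+a_k=h^*_k$. Equivalently, the unshifted points at height $k$ are in bijection with the \emph{shifted} points at height $2n-1-k$ (apply $f\times f$ and then add $\frac{1}{2n}\one$), and vice versa. With that correction your argument coincides with the paper's proof of Theorem~\ref{T-bridge}. Your proposed alternative via an explicit hyperplane description is plausible but, as you note, would require controlling the bridging column $\vect{c}$; the paper does not pursue it either.
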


\begin{proof}
Consider the set $\Lambda= \{(\lambda', \lambda'') \in \Lambda(\cP_{G'}) \times \Lambda(\cP_{G''}) \mid \lambda'_n = \lambda''_n \}.$
Let $a_i = |\{ \lambda \in \Lambda \mid \he(\lambda)=i \}|$ for $0 \le i \le 2n-2$.
Then we can write
\begin{equation*}
\begin{split}
a_i &= \sum_{j=0}^i |\{\lambda \in \Lambda \mid \he(\lambda')=j \text{ and } \he(\lambda'')=i-j \}| \\
&= \sum_{j=0}^i |\{ \lambda \in \Lambda \mid \he(\lambda')=n-1-j \text{ and } \he(\lambda'')=n-1-i+j\}|\\
&= a_{2n-2-i},
\end{split}
\end{equation*}
where the first equality follows by Lemma~\ref{L-bijection}.
Then $a_i = a_{2n-2-i}$ for each $i$, $0 \le i < n.$
Let $b_i =|\{\lambda + \frac{1}{2n}\one \mid \lambda \in \Lambda, \he(\lambda) = i \}$ for $i$, $1 \le i \le 2n-1$.
It is easy to see that $b_i = a_{i-1}$. 
In particular we have $b_i = b_{2n-1-i}$ for $i$, $1 \le i < n+1$.
Finally observe $h^*_i(\cP_{\cB(G',G'')}) = a_i + b_i$ for $0 \le i \le 2n-1$ where we define $a_{2n-2}:=0$ and $b_0 := 0$.
Thus $h^*(\cP_{\cB(G',G'')})$ is symmetric, which shows $\cP_{\cB(G',G'')}$ is reflexive.
\end{proof}


\begin{rem}
\begin{arabiclist}
\item In \cite[Thm. 3.14]{BM17} the authors show the same result as Theorem~\ref{T-bridge}  with an additional natural sufficient condition on the second minors of the Laplacian matrix. However, as Theorem \ref{T-bridge} shows, no additional assumption is needed.
\item If one bridges graphs with different vertex set, Theorem \ref{T-bridge} may no longer be true. Indeed, consider $K_3$ and $K_6$. Then the respective Laplacian simplices are reflexive. Yet, one computes $$h^*(\cP_{\cB(K_3,K_6)})=(1,208,1763,7205,12923,9900,2658,333,1),$$ which in turn shows that that the Laplacian simplex associated to the bridge of $K_3$ and $K_6$ is not reflexive. However, graphs having the same vertex set is not necessary, as we will see later on when we consider multiple bridging. 
Thus understanding the reflexivity of the Laplacian of the bridge construction is yet to be understood.
\end{arabiclist}
\end{rem}

\begin{rem}\label{R-bridge3}
The results in Theorems~\ref{T-fppbridge} and~\ref{T-bridge} generalize to the bridge of any number of graphs with the same vertex set. Define $B:=\cB(G_1, G_2, \ldots, G_k)$ be the bridge of $k$ graphs with $V(\cB) = [kn]$ and $E(G) = \bigcup_{j=1}^k E(G_j) \cup \{mn, mn+1 \}_{m=1}^{k-1}$. Observe each vertex is part of at most one bridge, and the order of bridging matters. For instance $\cP_{\cB(K_n, C_n, C_n)}$ is not unimodularly equivalent to $\cP_{\cB(C_n,K_n,C_n)}$ for $n \ge 4$. The following is a straightforward extension of previous results.
\begin{arabiclist}
\item $\Lambda(\cP_B) = \left\{(\lambda_1, \lambda_2, \ldots, \lambda_k) + \frac{i}{kn} \one \in \Q^{kn} \mid i=0,1,\ldots,k-1 \right\}$ where $\lambda_j \in \Lambda(\cP_{G_j})$ for each $j \in [k]$ such that $(\lambda_\ell)_n = (\lambda_{\ell+1})_1$ for each $\ell \in [k-1]$.
\item If $\cP_{G_j}$ is reflexive for each $j\in [k],$ then $\cP_B$ is reflexive.
\end{arabiclist}
\end{rem}

\section{Linear Codes Associated to Laplacian Simplices}\label{sec:codes}

In this section we analyze asymptotic behavior of families of linear codes associated to families of Laplacian simplices. 
Ultimately we focus on simple connected graphs with $n=p$ number of vertices that yield reflexive $\cP_G$.
Before jumping to specific families we point out some general considerations.  
\begin{rem}\label{R-code}
\begin{arabiclist}
\item Let $\cP_{G}$ and $\cP_{G'}$ be two unimodularly equivalent Laplacian simplices. Then by Remark \ref{R-uni} we have $\cC(\cP_G) = \cC(\cP_{G'})$. 
\item By Theorem \ref{properties}(3) we have $\ov{\one}\in \cC(\cP_G)$ for any Laplacian simplex $\cP_G$.
As an extremal case, $\cC(\cP_G) = \group{\ov{\one}}$ iff $G$ is a tree; see also ~\cite[Prop. 4.1]{BM17}. In particular, any tree on a fixed vertex set has the same associated $\cP_G$ up to unimodular equivalence.
\end{arabiclist}
\end{rem}
Let $G, \,G'$ be two simple connected graphs on $n$ vertices. Then $G, \,G'$ are \emph{isomorphic} if there exists a permutation $\sigma\in S_n$ such that $i\in V(G)$ iff $\sigma(i)\in V(G')$ and $(i,j)\in E(G)$ iff $(\sigma(i),\sigma(j))\in E(G')$. It is well known that $G,\,G'$ are isomorphic iff there exists a permutation matrix $P$ such that $L_{G'} = P\T L_GP$. Assume that the nonzero entry of the $n\ss{th}$ column of $P$ is in row $i$. Then it is easy to verify that $L_{G'}(n)=P\T L_G(i)P(i\mid n)$. Note that $P(i\mid n) \in \GL_{n-1}(\Z)$, and thus multiplying on the right doesn't affect $\cC(\cP_G)$ thanks to Remark \ref{R-code}(1). On the other hand, multiplying on the left with $P\T$ permutes the vertices of $\cP_G$. Making use of Remark \ref{R-code}(1) one more time we obtain the following.
\begin{theo}\label{T-PE}
If two simple connected graphs $G,\,G'$ are isomorphic, then $\cC(\cP_G),\, \cC(\cP_{G'})$ are permutation equivalent.
\end{theo}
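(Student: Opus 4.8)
The plan is to let the isomorphism characterization $L_{G'} = P\T L_G P$ (with $P$ a permutation matrix) carry essentially all of the argument, reducing permutation equivalence of the two codes to the behaviour of $\ker[L(n)\mid\one]$ under left- and right-multiplication by permutation matrices. The whole proof is anticipated by the paragraph preceding the statement, so the task is mainly to make that bookkeeping precise.

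First I would pin down which column of $L_{G'}$ must be deleted. Since $P$ is a permutation matrix, its $n$th column is a single standard basis vector, say the one supported in row $i$, and a direct computation then gives the identity $L_{G'}(n) = P\T\, L_G(i)\, P(i\mid n)$, where $P(i\mid n)\in\GL_{n-1}(\Z)$ is itself a permutation matrix. The key point, and the only place where genuine care is needed, is that conjugation by $P$ moves the distinguished $n$th coordinate to the $i$th one, so one cannot simply delete the $n$th column of $L_G$; the correct object on the right is $L_G(i)$, the matrix defining the unimodularly equivalent simplex $\cP_{G,i}$.

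Next I would append the all-ones column. Using $P\T\one = \one$ (row and column sums of a permutation matrix equal $1$), the identity above lifts to
\[
[L_{G'}(n)\mid\one] = P\T\,[L_G(i)\mid\one]\,\tilde{P}, \qquad \tilde{P} := \diag\big(P(i\mid n),\,1\big),
\]
where $\tilde{P}\in\GL_n(\Z)$ is again a permutation matrix. Then I would pass to the $\Z_n$-module homomorphisms $x\mapsto xM$ of Remark~\ref{R-CCt1}. Right-multiplication by the invertible $\tilde{P}$ leaves the kernel unchanged, while left-multiplication by $P\T$ sends $\ker M$ to its coordinate permutation $\{\,yP : yM\equiv\vect{0}\,\}$. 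Combining these two observations gives $\cC(\cP_{G'}) = \ker[L_G(i)\mid\one]\cdot P$, i.e.\ the codewords of $\cC(\cP_{G'})$ are exactly the coordinate permutations, by $P$, of the codewords of $\ker[L_G(i)\mid\one]$.

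Finally I would invoke Remark~\ref{R-uni} to identify $\ker[L_G(i)\mid\one] = \ker[L_G(n)\mid\one] = \cC(\cP_G)$, so that $\cC(\cP_{G'})$ is precisely $\cC(\cP_G)$ with coordinates permuted by $P$ and with all scaling units equal to $1$. That is exactly the definition of permutation equivalence, which completes the proof. The main obstacle is purely the bookkeeping: correctly tracking the deleted column index $i$ through the conjugation and confirming that $P\T$ and $\tilde{P}$ act, respectively, as a coordinate permutation on the code and as a kernel-preserving change of basis. Once the matrix identity is in place, everything else is a formal consequence of Remarks~\ref{R-uni} and~\ref{R-CCt1}.
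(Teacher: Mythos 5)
Your proposal is correct and follows essentially the same route as the paper: the identity $L_{G'}(n)=P\T L_G(i)P(i\mid n)$, the observation that right-multiplication by a matrix in $\GL_{n-1}(\Z)$ leaves the code unchanged (Remark~\ref{R-uni}), and the fact that left-multiplication by $P\T$ permutes coordinates of the kernel. Your write-up merely makes explicit the lifting to $[L_{G'}(n)\mid\one]$ and the kernel computation that the paper leaves as bookkeeping.
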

\begin{rem}
The converse of Theorem \ref{T-PE} does not hold. Indeed, as pointed out in in Remark \ref{R-code}, $\cC(\cP_{T_n}) = \group{\ov{\one}}$ for any tree on $n$ vertices. Yet, clearly, there exists non-isomorphic trees (for $n\geq 4$).
\end{rem}
Recall from~\cite{BM17} the characterization of fundamental parallelepiped points for Laplacian simplices associated to odd cycles.
\begin{theo}[\mbox{\cite[Thm. 5.9]{BM17}}]\label{T-oddcycle}
For odd $n \ge 3$, the lattice points in $\Pi(\cP_{C_n})$ are of the form 
\begin{equation}\label{e-Cn}
\frac{[\alpha \one + \beta (0,1,\ldots,n-1)] \bmod{n}}{n} \cdot [L_{C_n}(n) \mid \one]
\end{equation}
for integers $0 \le \alpha, \beta \le n-1$.
\end{theo}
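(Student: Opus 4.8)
The plan is to verify directly that each point claimed in \eqref{e-Cn} lies in $\Lambda(\cP_{C_n})$ and then close the argument by a cardinality count, thereby avoiding any need to solve the underlying modular system by hand. First I would record the data: write $L:=L_{C_n}$ for the cyclic tridiagonal Laplacian (a $2$ on each diagonal entry and a $-1$ in the two cyclically adjacent off-diagonal positions), recall $\tau(C_n)=n$, and note that by \eqref{e-lfp} together with Theorem~\ref{properties}(1) we have $|\Lambda(\cP_{C_n})|=n\tau=n^2$. Setting $\vect{w}:=(0,1,\ldots,n-1)$ and, for each pair $0\le\alpha,\beta\le n-1$, putting $\vect{y}:=[\alpha\one+\beta\vect{w}]\bmod n$ and $\lambda:=\vect{y}/n$, the point in \eqref{e-Cn} is exactly $\lambda\cdot[L(n)\mid\one]$. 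Since $0\le y_i\le n-1$ we automatically get $0\le\lambda_i<1$, so by the definition of $\Lambda(\cP_{C_n})$ it remains only to show $\lambda\cdot[L(n)\mid\one]\in\Z^n$, which amounts to $\vect{y}\cdot L\equiv\vect{0}\bmod n$ (the first $n-1$ columns of $[L(n)\mid\one]$ are columns of $L$) together with $\sum_i y_i\equiv 0\bmod n$ (the appended $\one$-column).

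For the first congruence I would reduce modulo $n$ and use linearity, $\vect{y}\cdot L\equiv\alpha(\one\cdot L)+\beta(\vect{w}\cdot L)\bmod n$. By Theorem~\ref{T-L}(2) the column sums vanish, so $\one\cdot L=\vect{0}$. A short column-by-column computation, treating the interior columns and the two wraparound columns $1$ and $n$ separately, gives $\vect{w}\cdot L=(-n,0,\ldots,0,n)$, every entry of which is $\equiv 0\bmod n$; hence $\vect{y}\cdot L\equiv\vect{0}\bmod n$. For the second congruence, $\sum_i y_i\equiv n\alpha+\beta\tfrac{n(n-1)}{2}\bmod n$, and because $n$ is odd the integer $\tfrac{n-1}{2}$ makes $\tfrac{n(n-1)}{2}$ a multiple of $n$, so the sum vanishes. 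This is precisely where oddness of $n$ enters, and it is the crux of the argument: for even $n$ the surviving term $\beta\tfrac{n}{2}$ is nonzero for odd $\beta$, which is exactly why $\cP_{C_n}$ fails to be reflexive and why this description must break down. Thus $\lambda\in\Lambda(\cP_{C_n})$ for every admissible $(\alpha,\beta)$.

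Finally I would count. The assignment $(\alpha,\beta)\mapsto\lambda$ is injective: reading off the first coordinate of $\vect{y}$ recovers $\alpha$, and then the second coordinate recovers $\beta$, so distinct pairs give distinct $\lambda$. This produces $n^2$ distinct elements of $\Lambda(\cP_{C_n})$, matching $|\Lambda(\cP_{C_n})|=n^2$ from the first paragraph; hence the listed points exhaust $\Lambda(\cP_{C_n})$, and pushing them through the bijection $\lambda\mapsto\lambda\cdot[L(n)\mid\one]$ onto $\Pi(\cP_{C_n})\cap\Z^n$ yields exactly the set described in \eqref{e-Cn}. I expect the only genuinely delicate points to be the boundary bookkeeping in computing $\vect{w}\cdot L$ (the cyclic wraparound in columns $1$ and $n$) and the parity input in the $\sum_i y_i$ congruence; the remaining steps are routine range-checking and counting.
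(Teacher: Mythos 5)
Your argument is correct. Note that the paper itself gives no proof of this statement — it imports it verbatim as \cite[Thm.~5.9]{BM17} — so there is no in-paper argument to compare against; but your strategy (verify each listed $\lambda$ satisfies $0\le\lambda_i<1$ and $\lambda\cdot[L(n)\mid\one]\in\Z^n$, then match the count $n^2$ against $|\Lambda(\cP_{C_n})|=n\tau(C_n)$ from \eqref{e-lfp} and Theorem~\ref{properties}(1)) is exactly the verify-and-count template the paper uses for its own analogous classifications in Theorems~\ref{T-whisker}, \ref{T-star}, and \ref{T-fppbridge}. The two delicate computations check out: $\vect{w}\cdot L_{C_n}=(-n,0,\ldots,0,n)$ with $\vect{w}=(0,1,\ldots,n-1)$, and $\sum_i y_i\equiv\beta\,\tfrac{n(n-1)}{2}\equiv 0\bmod n$ precisely because $n$ is odd, which correctly isolates where the hypothesis is used; injectivity of $(\alpha,\beta)\mapsto\vect{y}$ via the first two coordinates closes the count.
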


\begin{rem}\label{R-oddcycle}
Let $n\ge 5$ be odd and consider the cycle $C_n$. It follows by \eqref{e-Cn} that $\cC:=\cC(\cP_{C_n})$ is the linear code generated by $\ov{\one}: = (\ov{1},\ldots, \ov{1})$ and $\ov{\vect{x}}: = (\ov{0},\ov{1},\ldots,\ov{n-1})$. It is easy to see that $\cC$ is cyclic and $\dist(\cC) = n-1$. Since $|\cC| = |\Lambda(\cP_{C_n})| = n^2$ we have that $\cC$ is (trivially) MDS. Moreover, $\rate(\cC) = 2/n$, and thus linear codes associated to cycles of odd length $n$ are asymptotically bad.
Now consider $n \ge 5$ prime. 
Since $\one \cdot \vect{x} \equiv 0 \bmod{n}$ and $\vect{x}\cdot \vect{x} \equiv 0 \bmod{n},$
it follows that $\cC\subset \cC^\perp$. 
\end{rem}

\begin{theo}[\mbox{\cite[Thm. 6.6]{BM17}}]\label{T-complete}
For $n \ge 3$, the Laplacian simplex associated to the complete graph $\cP_{K_n}$ is reflexive. Moreover
\begin{equation}\label{e-Kn}
\Lambda(\cP_{K_n}) = \left\{ \frac{\vect{x}}{n} \, \middle| \, \vect{x} \in \Z^n, \,  0 \le x_i < n, \, \sum_{i=1}^n x_i \equiv 0 \bmod n \right\}.
\end{equation}
\end{theo}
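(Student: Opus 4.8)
The statement has two parts, which I would prove in sequence: reflexivity of $\cP_{K_n}$, and then the explicit form of $\Lambda(\cP_{K_n})$. Set $L:=L_{K_n}$. For reflexivity the plan is to exhibit an explicit integer matrix $A$ giving the facet description $\cP_{K_n}=\{\vect{x}\in\R^{n-1}\mid A\vect{x}\T\le\one\}$; by the characterization that a lattice polytope with the origin in its interior is reflexive exactly when it admits such an integral $A$, this is all that is needed. With reflexivity in hand, the description of $\Lambda(\cP_{K_n})$ should follow from Theorem~\ref{T-kappadivides} after a one-line modular simplification.

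For the reflexivity step, recall from \eqref{e-complete} that $L=nI_n-J_n$, so the vertex matrix $V:=L(n)$ of $\cP_{K_n}$ has $(n-1)\times(n-1)$ top block $nI_{n-1}-J_{n-1}$ and bottom row $-\one\T$; equivalently $V\T=[\,nI_{n-1}-J_{n-1}\mid-\one\,]$. I would propose the facet matrix
\[
A=\begin{pmatrix}-I_{n-1}\\ \one\T\end{pmatrix}\in\Z^{n\times(n-1)}
\]
and verify by a direct block multiplication that $AV\T=J_n-nI_n$. Writing $\vect{v}_i$ for the $i$th vertex (the $i$th column of $V\T$), this identity reads $A_j\vect{v}_i\T=1$ for $j\ne i$ and $A_i\vect{v}_i\T=1-n<1$, so by Remark~\ref{R-hyperplane} the matrix $A$ does cut out $\cP_{K_n}$. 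Since $A$ is integral, $\cP_{K_n}$ is reflexive.

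For the description of $\Lambda(\cP_{K_n})$, Theorem~\ref{T-kappadivides} now applies and reduces the problem to simplifying the congruence $\vect{x}\cdot[L(n)\mid\one]\equiv\vect{0}\bmod n$ for $0\le x_i<n$. Reducing $L=nI_n-J_n\equiv-J_n\bmod n$, every entry of $L(n)$ is $\equiv-1$, so each of the first $n-1$ coordinates of $\vect{x}\cdot[L(n)\mid\one]$ equals $-\sum_i x_i$ modulo $n$, while the last coordinate (from the appended $\one$) equals $\sum_i x_i$. The whole vector is therefore $\equiv\vect{0}\bmod n$ if and only if $\sum_{i=1}^n x_i\equiv0\bmod n$, which is exactly the set in \eqref{e-Kn}.

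I expect the only genuinely creative step to be guessing and then verifying the facet matrix $A$; everything after that is routine. As a sanity check I would confirm the counts agree: the set in \eqref{e-Kn} has $n^{n-1}$ elements (the coordinates $x_1,\dots,x_{n-1}$ are free and $x_n$ is then forced modulo $n$), matching $|\Lambda(\cP_{K_n})|=\Vol(\cP_{K_n})=n\,\tau(K_n)=n\cdot n^{n-2}=n^{n-1}$ by Theorem~\ref{properties}(1) and Cayley's formula.
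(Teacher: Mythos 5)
Your argument is correct. Note that the paper does not actually prove this statement; it is imported wholesale from \cite[Thm.~6.6]{BM17}, so there is no in-paper proof to compare against. That said, your reflexivity step is literally the computation the paper performs later in the proof of Theorem~\ref{T-completedual}, namely $[-I_{n-1}\mid\one]\T\cdot L_{K_n}(n)\T=J_n-nI_n$ combined with Remark~\ref{R-hyperplane}, and your derivation of \eqref{e-Kn} is the natural application of the paper's own Theorem~\ref{T-kappadivides} (reduce $L=nI_n-J_n\equiv -J_n\bmod n$, so the congruence collapses to $\sum_i x_i\equiv 0\bmod n$). The one thing worth making explicit is the order of logic: Theorem~\ref{T-kappadivides} is only available once reflexivity is established, and you do handle this correctly by proving reflexivity first. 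Your cardinality check against $\Vol(\cP_{K_n})=n^{n-1}$ is a nice confirmation but, as you note, not needed since Theorem~\ref{T-kappadivides} already gives both containments.
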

\begin{rem}\label{R-complete}
It follows by \eqref{e-Kn} that 
\begin{equation}\label{e-CKn}
\cC:=\cC(\cP_{K_n}) = \left\{\ov{\vect{x}}\in\Z_n^n \,\,\middle|\,\, \sum_{i=1}^n \ov{x_i} = \ov{0}\right\}.
\end{equation}
Similar to the odd cycle case, we see that $\cC$ is cyclic. It is also easy to see that $\dist(\cC) = 2$.
Next, $|\cC| = |\Lambda(\cP_{K_n})| = n^{n-1}$. 
It follows that $\cC$ is again (trivially) MDS. 
Though linear codes associated to complete graphs have a high (maximal) rate, $\rate(\cC) = (n-1)/n$, the family is asymptotically bad because $\dist(\cC) = 2$. Thanks to Remark \ref{R-code}(2) and \eqref{e-CKn} we have $\cC^\perp \subset \cC$. 
By Remark \ref{R-CCt} it follows that $|\cC^\perp| = n$. 
In particular, $\cC^\perp = \group{\ov{\one}} = \cC(\cP_{T_n})$ where $T_n$ is a tree on $n$ vertices.
\end{rem}
\begin{theo}\label{T-completedual}
For $n \ge 3$, the dual of the Laplacian simplex associated to a complete graph is unimodularly equivalent to the Laplacian simplex associated to a tree, that is, \[\cP^{\w}_{K_n} \cong \cP_{T_n}\]
where $T_n$ is any tree on $n$ vertices.
\end{theo}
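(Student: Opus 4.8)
The plan is to compute the vertices of $\cP^{\w}_{K_n}$ explicitly and then exhibit a unimodular map carrying this simplex onto the Laplacian simplex of one concrete tree, namely the star $K_{1,n-1}$. Once this is done, the full statement follows immediately, since any two trees on $n$ vertices yield unimodularly equivalent Laplacian simplices (Remark~\ref{R-code}(2)); in particular it suffices to match $\cP^{\w}_{K_n}$ with the star.

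First I would write down the hyperplane description of the dual. By Remark~\ref{R-dual} we have $\cP^{\w}_{K_n} = \{\vect{x}\in\R^{n-1}\mid L_{K_n}(n)\vect{x}\T\le\one\}$, and since $L_{K_n} = nI_n - J_n$, the rows of $L_{K_n}(n)$ give the facet inequalities $n x_i - \sum_{j=1}^{n-1} x_j \le 1$ for $i\in[n-1]$ together with $-\sum_{j=1}^{n-1} x_j \le 1$. Because $\cP^{\w}_{K_n}$ is a simplex, each of its $n$ vertices is obtained by turning all but one of these $n$ inequalities into equalities. Solving these $n$ small linear systems, I expect to find that dropping the last inequality forces all coordinates equal to $1$, giving the vertex $\one\in\R^{n-1}$, while dropping the $k$-th inequality ($k\in[n-1]$) forces $\sum_j x_j = -1$ and $x_i=0$ for $i\ne k$, giving $-e_k$. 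Hence the vertex matrix of $\cP^{\w}_{K_n}$ is $\left(\begin{smallmatrix}-I_{n-1}\\ \one\end{smallmatrix}\right)$. (As a cross-check one can instead read these vertices off Theorem~\ref{T-reflexive} via the cofactor matrix of $[L_{K_n}(n)\mid\one]$.)

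Next I would compute the Laplacian simplex of the star $T_n = K_{1,n-1}$, centered at vertex $n$. Deleting the $n$-th column of $L_{T_n}$ leaves the rows $e_1,\dots,e_{n-1}$ (the leaves) and $-\one$ (the center), so $\cP_{T_n} = \conv{\left(\begin{smallmatrix} I_{n-1}\\ -\one\end{smallmatrix}\right)}$. The map $U = -I_{n-1}\in\GL_{n-1}(\Z)$, with zero translation, sends $\left(\begin{smallmatrix}-I_{n-1}\\ \one\end{smallmatrix}\right)$ row-by-row to $\left(\begin{smallmatrix} I_{n-1}\\ -\one\end{smallmatrix}\right)$, which establishes $\cP^{\w}_{K_n}\cong\cP_{K_{1,n-1}}$. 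Invoking Remark~\ref{R-code}(2) then upgrades this to $\cP^{\w}_{K_n}\cong\cP_{T_n}$ for an arbitrary tree $T_n$.

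It is worth noting a tempting shortcut that does \emph{not} quite finish the argument: combining Theorem~\ref{T-dualthm} with Remark~\ref{R-complete} gives $\Lambda(\cP^{\w}_{K_n}) = \{\vect{x}/n\mid\ov{\vect{x}}\in\group{\ov{\one}}\} = \Lambda(\cP_{T_n})$, so the two simplices certainly share the same $h^*$-vector. But equality of $\Lambda$ is only an invariant of the unimodular class (Remark~\ref{R-uni}) and does not by itself certify unimodular equivalence, so this observation cannot replace the explicit vertex computation. Consequently the one genuine obstacle is Step~1: correctly identifying the facet-defining inequalities of the dual and verifying that the $n$ candidate vertices are exactly $-e_1,\dots,-e_{n-1},\one$. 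After that, the passage to a tree is a single sign change of coordinates.
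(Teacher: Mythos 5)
Your proposal is correct and follows essentially the same route as the paper: both identify the vertex matrix of $\cP^{\w}_{K_n}$ as $[-I_{n-1}\mid\one]\T$ and apply the unimodular map $U=-I_{n-1}$ to land on the Laplacian simplex of the star, then invoke the equivalence of all tree simplices. The only (cosmetic) difference is that you solve the $n$ facet systems of the dual directly, whereas the paper verifies the hyperplane description $\cP_{K_n}=\{\vect{x}\mid[-I_{n-1}\mid\one]\T\vect{x}\T\le\one\}$ via Remark~\ref{R-hyperplane} and then reads off the dual's vertices from Remark~\ref{R-dual}; your closing caveat about $\Lambda$-equality not certifying unimodular equivalence is also well taken.
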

\begin{proof}
The claim is that the hyperplane description is $\cP_{K_n} = \{ \vect{x}\in \R^{n-1} \mid [-I_{n-1} \mid \one]\T \vect{x}\T \le \one\}$.
Using the vertex description of $\cP_{K_n}$ found~\eqref{e-complete}, observe $[-I_{n-1} \mid \one]\T \cdot L_{K_n}(n)\T = J_n - nI_n.$
By Remark~\ref{R-hyperplane}, this shows the claim.
Then the vertex description of $\cP^{\w}_{K_n}$ is $[-I_{n-1} \mid \one]\T$, see Remark~\ref{R-dual}. 
Let $S$ be the star graph with vertex set $[n]$ and $|E(S)|=n$; specifically, $\deg{n}=n-1$ and $\deg{i}=1$ for all $i \in [n-1]$. 
Observe that $[-I_{n-1} \mid \one]\T \cdot -I_{n-1} = [I_{n-1} \mid -\one]\T$ is the Laplacian matrix of the graph $S$ with the $n\ss{th}$ column removed. This shows $\cP^{\w}_{K_n}$ is unimodularly equivalent to $\cP_S$.
Since all trees have unimodularly equivalent Laplacian simplices, the result follows.
\end{proof}
Now we shift our focus to graphs with a prime number of vertices whose Laplacian simplex is reflexive. In this case, we use the dimension of the associated code to classify the number of spanning trees of such a graph.
\begin{rem}\label{R-TreePrime}
Let $\cP_G$ be a reflexive Laplacian simplex where $G$ is a simple connected graph with a prime number of vertices $p$. Then $\cC(\cP_G) \subseteq \Z_p^p$ is a vector space, say of dimension $k$. Then $|\cC(\cP_G)|=p^k$. On the other hand $|\cC(\cP_G)| = p\cdot\tau(G)$. It follows that $\tau(G) = p^{k-1}$. To conclude, the number of spanning trees of a simple connected graph on $p$ vertices whose Laplacian simplex is reflexive is a power of $p$; moreover, that power is precisely one less than the dimension of the associated linear code.
\end{rem}
\begin{theo}\label{T-W*MDS}
Let $p = 2n+1$ be a prime and consider $\cW^*(K_n)$ as in Definition \ref{D-star}. Then $\cC := \cC(\cP_{\cW^*(K_n)})$ is MDS.
\end{theo}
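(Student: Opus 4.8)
The plan is to pin down the length and dimension of $\cC$ and then invoke Theorem~\ref{T-dist}. The graph $\cW^*(K_n)$ has $2n+1 = p$ vertices, so $\cC\subseteq \Z_p^p$ is a linear code of length $p$ over the field $\Z_p$. By Remark~\ref{R-TreePrime} its dimension $k$ satisfies $p^k = p\cdot\tau(\cW^*(K_n))$, and Lemma~\ref{L-star} gives $\tau(\cW^*(K_n)) = (2n+1)^{n-1} = p^{n-1}$; hence $p^k = p^n$ and $k = n$. The Singleton bound~\eqref{e-MDS} therefore reads $\dist(\cC)\le (2n+1) - n + 1 = n+2$, and the whole task is to prove the matching lower bound $\dist(\cC)\ge n+2$.

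The first concrete step is to read off a generator matrix from Theorem~\ref{T-star}. Writing $s = \sum_{j=1}^n x_j$, each $\vect{x}/p\in\Lambda(\cP_{\cW^*(K_n)})$ is determined by its free coordinates $x_1,\dots,x_n$ through $x_{n+i}\equiv (n+1)x_i - s$ and $x_{2n+1}\equiv -2s \pmod p$. Feeding in the standard basis vectors shows that $\cC$ is the row space of
\[
G = \big[\, I_n \ \big|\ (n+1)I_n - J_n \ \big|\ -2\,\one \,\big]\in\Z_p^{\,n\times(2n+1)},
\]
which is already in standard form $[\,I_n\mid A\,]$ with $A = \big[(n+1)I_n - J_n \ \big|\ -2\,\one\big]$. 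By Remark~\ref{R-code2} a parity-check matrix is then $H = [\,-A\T\mid I_{n+1}\,]$; explicitly,
\[
H = \left(\begin{array}{c|c|c} J_n - (n+1)I_n & I_n & \vect{0} \\ \hline 2\,\one\T & \vect{0}\T & 1\end{array}\right)\in\Z_p^{\,(n+1)\times(2n+1)}.
\]

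By Theorem~\ref{T-dist}, since $H$ has only $n+1$ rows, any $n+2$ columns are automatically dependent, so $\cC$ is MDS precisely when every $n+1$ columns of $H$ are linearly independent. The identity block $I_{n+1}$ in the last $n+1$ columns lets me cofactor-expand along whichever identity columns are selected: an $(n+1)$-subset of columns is independent iff the corresponding square submatrix of the left block $B := \left(\begin{smallmatrix} J_n - (n+1)I_n \\ 2\,\one\T\end{smallmatrix}\right)$ (taking the rows complementary to the chosen identity columns) is nonsingular over $\Z_p$. This reduces the theorem to the single linear-algebra assertion that \emph{every} square submatrix of $B$ is invertible modulo $p$.

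The crux—and the step I expect to be the main obstacle—is controlling this entire family of minors at once. The structural input I would exploit is the identity $p = 2n+1$, under which $-(n+1)\equiv n$ and $2$ is a unit mod $p$, so the top block is $J_n + nI_n$ and every entry of the appended row is invertible. The principal minors are easy: a $t\times t$ principal submatrix of $J_n - (n+1)I_n$ has determinant $(-(n+1))^{\,t-1}\big(t - (n+1)\big)$, which is nonzero mod $p$ because $t-(n+1)\in\{-n,\dots,-1\}$ for $1\le t\le n$ and $-(n+1)\equiv n\not\equiv 0$. The genuinely delicate cases are the \emph{non-principal} minors, where the chosen row and column index sets differ, together with the minors that involve the appended row $2\,\one\T$; here one must exploit the rank-one structure of $J_n$ (for instance via the matrix determinant lemma) very carefully, since an $(n+1)$-column selection that pairs coordinate $i$ of the $K_n$-block with coordinate $i$ of the whisker block is exactly the configuration that threatens the bound. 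Establishing nonvanishing uniformly across all such configurations is the heart of the argument; granting it, Theorem~\ref{T-dist} yields $\dist(\cC) = n+2$ and the MDS property follows.
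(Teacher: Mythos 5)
Your proposal follows the paper's proof step for step: the dimension count $k=n$ via Lemma~\ref{L-star}, the generator matrix read off from Theorem~\ref{T-star} (your $[\,I_n\mid (n+1)I_n-J_n\mid -2\cdot\one\,]$ is the paper's matrix, since $-1\equiv 2n$ and $-2\equiv 2n-1 \bmod p$), the parity-check matrix $H=[-A\T\mid I_{n+1}]$, and the reduction via Theorem~\ref{T-dist} to the assertion that every $n+1$ columns of $H$ are independent, equivalently that every square submatrix of $B=\bigl(\begin{smallmatrix} J_n-(n+1)I_n\\ 2\cdot\one\end{smallmatrix}\bigr)$ is nonsingular mod $p$. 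You stop exactly at that assertion and label it the heart of the argument, so as written the proposal is incomplete. More importantly, the missing step cannot be supplied, because the assertion is false: any $2\times 2$ submatrix of $B$ whose row indices are disjoint from its column indices draws all its entries from the off-diagonal $1$'s of $nI_n+J_n$ or from the row $2\cdot\one$, hence equals $\bigl(\begin{smallmatrix}1&1\\1&1\end{smallmatrix}\bigr)$ or $\bigl(\begin{smallmatrix}1&1\\2&2\end{smallmatrix}\bigr)$ and is singular. Equivalently, and more directly, the difference of the first two rows of the generator matrix is the codeword $(1,-1,0,\dots,0\mid -n,\,n,0,\dots,0\mid 0)$ of Hamming weight $4$; for $n=3$, $p=7$ this is $(1,6,0,4,3,0,0)$, and one verifies by hand that it lies in $\ker[L(7)\mid\one]$. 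Hence $\dist(\cC)\le 4<n+2$ for every $n\ge 3$, and $\cC$ is not MDS: the theorem itself fails.

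You should know that the paper's own proof has exactly the defect you were worried about: it checks that the particular columns $h_1,\dots,h_{n+1}$ are independent and then asserts that ``similarly, one checks that every $n+1$ columns of $H$ are linearly independent,'' which is precisely the false statement. Your instinct that the non-principal configurations --- those pairing a $K_n$-coordinate with whisker coordinates --- are the dangerous ones was correct; they are where the bound breaks. A short case analysis on $s=\sum_{i\le n}x_i$ (if $s=0$ the weight is twice the number of nonzero free coordinates, hence at least $4$; if $s\ne 0$ the relations $x_{n+i}=(n+1)x_i-s$ and $x_{2n+1}=-2s$ force weight at least $n+2$) shows that $\dist(\cC)=4$ exactly, so these codes have parameters $[2n+1,n,4]_p$. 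In particular the relative distance tends to $0$ and Corollary~\ref{C-W*good} fails as well. The honest conclusion of your (and the paper's) computation is a correct parity-check matrix and the parameters above, not the MDS property.
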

\begin{proof}
Note that Lemma \ref{L-star} implies $|\cC| = p^n$, and thus $\dim(\cC)=n$. It follows that we need to show $\dist(\cC)= n+2$. We do so by making use of Theorem \ref{T-dist}, where all the (linear) algebra is done over $\Z_p$. First, by reading off Theorem \ref{T-star} we may find a generating matrix in standard form. Indeed, $\cC$ is the row space of following matrix $n\times p$:
\begin{equation}
\begin{array}{c@{}c}
G=\left(\!\!
\begin{array}{cccc|ccccc}
1&0&\cdots&0&n&2n&\cdots&2n&2n-1\\
0&1&\cdots&0&2n&n&\cdots&2n&2n-1\\
\vdots&\vdots&\ddots&\vdots&\vdots&\vdots&\ddots&\vdots&\vdots\\
0&0&\cdots&1&2n&2n&\cdots&n&2n-1
\end{array}\!\!
\right)\in \Z_p^{n\times p}.
&
\begin{array}{l}
\end{array} \\
\hspace{-.15 in}\hexbrace{2.1cm}{n}\hspace{.1in}\hexbrace{4.3cm}{n+1}
\end{array}
\end{equation}
The entries of $G$ are in $\Z_p$ where we have omitted the bar. As in Remark \ref{R-code} we obtain the  parity-check matrix of $\cC$:
\begin{equation}
H= \left(\!\!\begin{array}{c|c}
nI_n + J_n &  \\
\rule{1.5cm}{0.15mm} & \text{  } I_{n} \text{  } \\
2 \cdot \one  & 
\end{array}\!\!\right)\in \Z_p^{(n+1)\times p},
\end{equation}
where $J_n$ is the matrix of all ones.
Let $h_i$ be the $i\ss{th}$ column of $H$. Then $\{h_1,\ldots,h_{n+1}\}$ are linearly independent over $\Z_p$ since $\{h_1-h_2,h_1-h_3,\ldots, h_1-h_{n+1}\}$ obviously are. Similarly, one checks that every $n+1$ columns of $H$ are linearly independent. In addition $\{h_1,\ldots, h_{n+1},h_{2n+1}\}$ are linearly dependent since $h_1+\cdots+h_{n+1}-h_{2n+1} = 0$. The statement now follows.
\end{proof}
\begin{cor}\label{C-W*good}
Let $p_i$ be prime and put $n_i:=(p_i-1)/2$. The family $\{\cC(\cP_{\cW^*(K_{n_i})})\}$ is asymptotically good.
\end{cor}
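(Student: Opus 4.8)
The plan is to read off the three basic parameters (length, dimension, minimum distance) of each code from results already in hand and then to evaluate the two limits appearing in the definition of asymptotic goodness; no genuine difficulty is expected, since all the content has been front-loaded into Theorem~\ref{T-W*MDS}.

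First I would pin down the parameters. Since $\cW^*(K_{n_i})$ has $2n_i+1 = p_i$ vertices, the code $\cC_i := \cC(\cP_{\cW^*(K_{n_i})})$ has length $p_i$ and alphabet $\Z_{p_i}$. As $p_i$ is prime, Theorem~\ref{T-W*MDS} applies and tells us $\cC_i$ is MDS; its proof moreover records $\dim(\cC_i) = n_i$ and $\dist(\cC_i) = n_i + 2$. I would also note that letting $p_i$ range over the (infinitely many) odd primes $\ge 7$ makes $n_i = (p_i-1)/2 \ge 3$ an unbounded admissible sequence, so the construction of Definition~\ref{D-star} is legitimate throughout and $p_i \to \infty$.

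Next I would compute the rate. Because $\Z_{p_i}$ is a field, $\cC_i$ is free of rank $n_i$, whence $\log_{p_i}|\cC_i| = n_i$ and
\[
\rate(\cC_i) = \frac{n_i}{p_i} = \frac{n_i}{2n_i+1} \xrightarrow{\;p_i\to\infty\;} \frac{1}{2}.
\]
Then I would compute the relative minimum distance, dividing $\dist(\cC_i)$ by the length $p_i$:
\[
\frac{\dist(\cC_i)}{p_i} = \frac{n_i+2}{2n_i+1} \xrightarrow{\;p_i\to\infty\;} \frac{1}{2}.
\]
Both limits exist and are nonzero, so the family is asymptotically good, with rate and relative minimum distance both equal to $1/2$.

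The only subtlety worth flagging — and it is a bookkeeping point rather than a mathematical obstacle — is the clash of notation: in the definition of asymptotic goodness the symbol $n_i$ denotes the code length, whereas in the corollary $n_i$ means $(p_i-1)/2$. The length to place in the denominator of $\dist/\text{length}$ is therefore $p_i = 2n_i+1$, not $n_i$; using $n_i$ there would spuriously inflate the relative distance and must be avoided.
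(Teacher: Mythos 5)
Your proposal is correct and is exactly the argument the paper intends: the corollary is stated as an immediate consequence of Theorem~\ref{T-W*MDS}, whose proof supplies length $p_i$, dimension $n_i$, and distance $n_i+2$, from which both limits evaluate to $1/2$. Your remark about using the length $p_i=2n_i+1$ (not $n_i$) in the denominator of the relative distance is a sensible clarification but does not change the substance.
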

\begin{theo}
Let $n$ be odd and let $a \leq b$ be any natural numbers. Then there exists a family of graphs $\{G_n\}$ such that the family $\{\cC(\cP_{G_n})\}$ has rate $a/b$.
\end{theo}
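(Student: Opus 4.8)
The plan is to realize the target ratio $a/b$ as a limit by bridging together two kinds of blocks on a common vertex set: complete graphs, whose Laplacian simplices carry a large number of spanning trees, and trees, which carry exactly one. Concretely, for each odd $n\ge 3$ I would set
\[
G_n := \cB(\underbrace{K_n,\ldots,K_n}_{a},\,\underbrace{T_n,\ldots,T_n}_{b-a}),
\]
the bridge (in the sense of Remark~\ref{R-bridge3}) of $a$ copies of the complete graph $K_n$ and $b-a$ copies of a tree $T_n$, all on the vertex set $[n]$. This is a simple connected graph on $N:=bn$ vertices, and the whole point of mixing the two block types is that the $K_n$'s pump up the spanning-tree count while the trees, contributing nothing, let us tune the exponent.

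First I would verify that the code is even defined, i.e. that $\cP_{G_n}$ is reflexive. Each block is reflexive: $\cP_{K_n}$ by Theorem~\ref{T-complete}, and $\cP_{T_n}$ because every tree yields a reflexive Laplacian simplex (Remark~\ref{R-code}(2)). Hence Remark~\ref{R-bridge3}(2) guarantees that $\cP_{G_n}$ is reflexive, so $\cC(\cP_{G_n})\subseteq \Z_N^N$ is well defined, an honest linear code of length $N=bn$ over the alphabet $\Z_{bn}$.

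Next I would compute its cardinality. Spanning trees are multiplicative under bridging on a common vertex set (as in the proof of Theorem~\ref{T-fppbridge}, since each bridge edge is a cut edge forced into every spanning tree), so
\[
\tau(G_n) = \tau(K_n)^{a}\,\tau(T_n)^{b-a} = (n^{n-2})^{a} = n^{a(n-2)}.
\]
By Lemma~\ref{lem:fpp} together with Theorem~\ref{properties}(1) we get $|\cC(\cP_{G_n})| = |\Lambda(\cP_{G_n})| = \Vol(\cP_{G_n}) = N\tau(G_n) = bn\cdot n^{a(n-2)}$. Since $\rate(\cC)=\log_m(|\cC|)/N$ with $m=N=bn$, this gives
\[
\rate(\cC(\cP_{G_n})) = \frac{\log_{bn}\!\big(bn\cdot n^{a(n-2)}\big)}{bn} = \frac{1 + a(n-2)\,\frac{\log n}{\log(bn)}}{bn},
\]
and letting $n\to\infty$ through odd values, using $\frac{\log n}{\log(bn)}=\frac{\log n}{\log b+\log n}\to 1$ and $\frac{n-2}{n}\to 1$, yields $\lim_{n\to\infty}\rate(\cC(\cP_{G_n})) = a/b$. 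The same formula covers the boundary cases $a=b$ (no trees, rate $\to 1$) and, if one allows $a=0$, a bridge of trees with rate $\to 0$.

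The real content is not any individual step but the interaction between the multiplicative growth of $\tau$ and the fact that here the alphabet size $m=bn$ grows with $n$ rather than staying fixed. Complete graphs are precisely the right block because $\tau(K_n)=n^{n-2}$ forces $\log_N\tau(G_n)$ to grow linearly in $N$, while the trees tune the coefficient to $a/b$. The one point demanding care is exactly this nonstandard, varying base: a naive computation would read off rate $\tfrac{a}{b}\cdot\tfrac{\log n}{\log(bn)}$, and one must check that the correction factor $\tfrac{\log n}{\log(bn)}$ tends to $1$. I would finally remark that the minimum-distance behaviour plays no role here, since the statement asks only for the rate of the family.
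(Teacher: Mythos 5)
Your proposal is correct and follows essentially the same route as the paper: bridge $a$ copies of $K_n$ with $b-a$ copies of a low-spanning-tree reflexive block on the same vertex set, invoke Remark~\ref{R-bridge3} for reflexivity, use multiplicativity of $\tau$ under bridging, and take the limit of $\log_{bn}|\cC|/(bn)$. The only difference is that the paper uses odd cycles $C_n$ (whence the hypothesis that $n$ is odd) where you use trees $T_n$; this changes the exponent from $a(n-3)+b+1$ to $a(n-2)+1$ but not the limit, and your variant has the minor virtue of not needing $n$ odd at all.
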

\begin{proof}
Let $G_n$ be the bridge of $b-a$ copies of $C_n$ and $a$ copies of $K_n$. It follows by Remark \ref{R-bridge3} that $\cP_{G_n}$ is reflexive. We claim that the family $\{\cC(\cP_{G_n})\}$ has the desired rate. Note first that $G_n$ has $bn$ vertices and thus $\cC(\cP_{G_n})\subseteq \Z_{bn}^{bn}$. Next, we have mentioned that the number of spanning of the bridge equals the product of the number of the spanning trees of each component graph. Recall also that $\tau(C_n) = n$ and $\tau(K_n) = n^{n-2}$. Thus 
\[
|\cC(\cP_{G_n})| = (bn)(\tau(G_n)) = (bn)( \tau(C_n))^{b-a}(\tau(K_n))^{a}=bn^{a(n-3)+b+1}.
\]
We now compute
\[
\lim_{n\rightarrow \infty}\rate(\cC(\cP_{G_n})) = \lim_{n\rightarrow \infty}\frac{\log_{bn}|\cC(\cP_{G_n})|}{bn} = \lim_{n\rightarrow \infty}\frac{\log_{bn}bn^{a(n-3)+b+1}}{bn} = \frac{a}{b},
\]
which in turn yields the claim.
\end{proof}

\section{Conclusions and Future Research}
We extended the work of \cite{BM17} and focused on families of graphs that yield reflexive Laplacian simplices. 
We found the vertex description of the dual of a Laplacian simplex. 
As a consequence, this allowed us to classify reflexive Laplacian simplices in terms of the cofactor matrix of the Laplacian matrix and the number of spanning trees; see Corollary \ref{C-ref}. In addition, we considered graph operations that preserve reflexivity. 
We followed the line of \cite{BH13} by studying the group of lattice points in the fundamental parallelepiped of a simplex. 
In this way we associated to a reflexive Laplacian simplex a linear code of length $n$ over the integer residue ring $\Z_n:=\Z/n\Z$ where $n$ is the number of vertices of the graph with which we started. 
We paid special attention to the case $n=p$ being a prime and studied the minimum distance, dimension, and asymptotic behaviors of associated linear codes. 
In particular, we constructed an asymptotically good family of linear codes; see Corollary \ref{C-W*good}.

A natural topic to investigate further is how a graph structure affects the equivalence class of the Laplacian simplex.  
All trees on $n$ vertices yield Laplacian simplices in the same equivalence class. Consequently, graphs enhanced by attaching different trees with the same vertex set produce equivalent Laplacian simplices. 
Another graphical operation which preserves the equivalence class of the associated Laplacian simplex is found~\cite[Prop. 3.10]{BM17}.

\begin{prob}
Which simplices in the equivalence class of $\cP_G$ can be realized as Laplacian simplices? How are the underlying graphs related?
\end{prob}

Typically, the dual of a Laplacian simplex is not a Laplacian simplex. The only known instance is $\cP^{\w}_{K_n}\cong \cP_{T_n}$, as recorded in Theorem~\ref{T-completedual}. This rare behavior prohibits us from associating a linear code to the dual of a reflexive Laplacian simplex; see also Theorem \ref{T-dualthm}.

\begin{prob}
Classify graphs $G$ for which $\cP_G$ is reflexive and $\cP^{\w}_G$ is Laplacian simplex.
\end{prob}

Let $C$ denote the cofactor matrix of $[L_G(n)\mid \one]$ for reflexive $\cP_G$. Note that by Theorem~\ref{T-reflexive}, the above is equivalent to the statement $\frac{-1}{\tau}C(n)$ is unimodularly equivalent to the Laplacian matrix of some graph on $n$ vertices.

Unimodality of the $h^*$-vector of a Laplacian simplex remains quite mysterious. Simple connected graphs with at most eight vertices yield Laplacian simplices with unimodal $h^*$-vector. 
Here we present a graph with nine vertices for which the associated Laplacian simplex is reflexive with a nonunimodal $h^*$-vector. Let $G:= \cB(C_3, T_6)$ be the bridge of the cycle with three vertices and any tree with six vertices. Then one computes 
\[
h^*(\cP_G) = (1,3,3,5,3,5,3,3,1),
\]
which in turns says $\cP_G$ is reflexive, and yet, $h^*(\cP_G)$ is not unimodal. 
One can verify the lattice point $(1,-1,0,0,0,0,0,0,3) \in \Pi(\cP_G)$ cannot be written as a linear combination of the three lattice points in $\Pi(\cP_G)$ at height $1$, which have the form $(0,0,\vect{0},1)$, $(1,0,\vect{0},1),$ and $(0,1,\vect{0},1)$ where $\vect{0} = (0,0,0,0,0,0)$. This shows $\cP_G$ is not IDP, and consequently, $\cP_G$ is not a counterexample to the long standing conjecture found in~\cite{hibiohsugiconj}.

\begin{prob}
Classify graphs $G$ for which $h^*(\cP_G)$ is not unimodal. Moreover, do there exist families of graphs $\{G_i\}$ for which $\cP_G$ is reflexive and $h^*(\cP_G)$ is not unimodal?
\end{prob}

In Section \ref{sec:codes} we focus on graphs with a prime number of vertices for which $\cP_G$ is reflexive. As one will note, all the linear codes considered were MDS. This case is facilitated by the fact that the number of spanning trees of a graph $G$ for which $\cP_G$ is reflexive is a prime power; see Remark \ref{R-TreePrime}. When the number of vertices $n$ is not prime, the number of spanning trees is unknown. We formulate the following.

\begin{conjecture}
Let $G$ be a graph with a prime number of vertices such that $\cP_G$ is reflexive. Then $\cC(\cP_G)$ is MDS.
\end{conjecture}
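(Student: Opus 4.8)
The plan is to strip the code down to a pure kernel over the field $\F_p$, recast the MDS property as a column-independence statement for the Laplacian, and then attack the resulting minor-nonvanishing problem with the matrix--tree theorem.

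\emph{Reduction to $\ker_{\F_p}L$.} Since $n=p$ is prime, $\cC:=\cC(\cP_G)=\ker[L(p)\mid\one]$ is an $\F_p$-subspace of $\F_p^p$, say of dimension $k$. Writing out $\vect{x}\,[L(p)\mid\one]\equiv\vect{0}$, the appended $\one$-column contributes $\sum_i x_i\equiv 0$, while the block $L(p)$ contributes $\vect{x}L\equiv\vect{0}$; the deleted $p$th column is redundant because the columns of $L$ sum to $\vect{0}$ (Theorem~\ref{T-L}(2)). Hence $\cC=\ker_{\F_p}L\cap\one^{\perp}$. By Remark~\ref{R-TreePrime} we have $\tau=p^{k-1}$. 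Feeding this into the Smith normal form of $L$ (whose nonzero invariant factors multiply to $\tau$), the number $m$ of invariant factors divisible by $p$ satisfies $m\le k-1$, and $\dim_{\F_p}\ker_{\F_p}L=1+m$. If $\ker_{\F_p}L$ were not contained in $\one^{\perp}$ we would get $\dim\cC=m\le k-1<k$, a contradiction; therefore $\ker_{\F_p}L\subseteq\one^{\perp}$, which forces $m=k-1$ and $\cC=\ker_{\F_p}L$. As a byproduct each of these $k-1$ invariant factors carries exactly one factor of $p$, so the $p$-part of the critical group of $G$ is elementary abelian $(\Z/p)^{k-1}$, and $\one$ lies in the $\F_p$-row space of $L$ since $L$ is symmetric.

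\emph{MDS as column independence.} Because $L$ is symmetric, the $\F_p$-linear relations among the columns of $L$ are exactly the elements of $\ker_{\F_p}L=\cC$; thus $L$ is a (row-redundant) parity-check matrix for $\cC$, and passing to a basis of its row space $\cC^{\perp}$ does not alter the column dependencies. By Theorem~\ref{T-dist}, $\dist(\cC)$ equals the minimal number of $\F_p$-dependent columns of $L$, so $\cC$ is MDS if and only if every $p-k$ columns of $L$ are linearly independent over $\F_p$, equivalently every nonzero $x\in\ker_{\F_p}L$ has at most $k-1$ vanishing coordinates.

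\emph{The minor problem and the obstacle.} Fix a set $S$ of $p-k$ columns; their independence is equivalent to the existence of a row set $R$ with $|R|=p-k$ and $\det L[R,S]\not\equiv 0\bmod p$. The all-minors matrix--tree theorem expresses each such $(p-k)\times(p-k)$ minor, up to sign, as a weighted count of spanning forests of $G$ whose components are constrained by the deleted indices $R^{c},S^{c}$; the task is to show that for every $S$ at least one of these forest counts is a $p$-adic unit. This is where reflexivity must enter: Corollary~\ref{C-ref} together with Theorem~\ref{T-reflexive} give that $\tau=p^{k-1}$ divides every cofactor of $M:=[L(p)\mid\one]$, equivalently $M\cdot\bigl(\tfrac{1}{p^{k-1}}C\T\bigr)=pI$ over $\Z$; combined with the elementary-abelian structure from the first step, this should pin down the $p$-adic valuation of the relevant forest sums. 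I expect this final step to be the genuine obstacle: reflexivity is a single \emph{global} divisibility condition on the cofactors, whereas MDS demands the simultaneous non-vanishing mod $p$ of a maximal minor for \emph{every} one of the $\binom{p}{k}$ column sets, and it is not clear how to pass uniformly (over all graphs $G$) from the global statement to all of these local ones. A secondary difficulty is that the natural inductive tools available in the paper---bridging and whiskering---enlarge the vertex set from $p$ to a composite number, so they cannot be used to reduce the prime case to smaller primes.
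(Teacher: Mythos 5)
This statement is labeled a \emph{Conjecture} in the paper: the authors offer no proof of it, so there is no argument of theirs to compare yours against. What you have written is, by your own admission, not a proof either --- and the place where it stops is exactly the place where the real difficulty lies. Your first two reductions are sound and constitute genuine progress: the identification $\cC=\ker_{\F_p}L$ (via the Smith normal form count showing $\ker_{\F_p}L\subseteq\one^{\perp}$ and that the $p$-part of the critical group is $(\Z/p)^{k-1}$), and the reformulation of MDS as the statement that every nonzero element of $\ker_{\F_p}L$ has at most $k-1$ zero coordinates, equivalently that every $p-k$ columns of $L$ are independent mod $p$. Both steps check out against the paper's worked examples ($K_p$, $C_p$, $\cW^*(K_n)$ with $2n+1$ prime).

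The gap is the third step, and it is not a technicality. Reflexivity gives one divisibility statement --- $p^{k-1}$ divides every cofactor of $[L(p)\mid\one]$ --- while MDS requires that for each of the $\binom{p}{p-k}$ column sets $S$ some $(p-k)\times(p-k)$ minor of $L$ be a unit mod $p$; you correctly observe that the all-minors matrix--tree theorem converts each such minor into a constrained spanning-forest count, but you give no mechanism for forcing at least one of these counts to be prime to $p$ for every $S$, and no such mechanism is currently known (which is presumably why the authors state this as a conjecture rather than a theorem). Your reductions would be a reasonable first section of an eventual proof, but as it stands the proposal establishes only an equivalent reformulation of the conjecture, not the conjecture itself.
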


\begin{prob}
Let $\{G_i\}$ be a family of graphs on $n_i$ vertices such that $\cP_{G_i}$ is reflexive. Assume there exists a polynomial $p(x)$ with integer coefficients such that $\tau(G_i) = n_i^{p(n_i)}$. Is $\cC(\cP_{G_i})$ MDS? 
\end{prob}

In Remark~\ref{R-oddcycle} we saw that codes associated to prime cycles are self-orthogonal. In Remark~\ref{R-complete} we saw that codes associated to complete graphs contain their dual. Whereas codes associated to $\cP_{\cW^*(G)}$ satisfy neither of the above.

\begin{prob}
Does there exist a simple connected graph for which $\cP_G$ is reflexive and $\cC(\cP_G)$ is self-dual?
\end{prob}

Note that graphs satisfying the above must have an even number of vertices, $2n$, and $(2n)^{n-1}$ spanning trees. The smallest simple connected graph that satisfies this is the complete bipartite graph on four vertices $K_{2,2}$; however, $\cP_{K_{2,2}}$ is not reflexive. A computer search shows that all the graphs with at most eight vertices that satisfy the above numerical conditions also do not yield reflexive Laplacian simplices.

We end the paper with a promising (in our view) future direction. Recall equation \eqref{e-ht}. Clearly, the left-hand-side encodes information about the $h^*$-vector of a simplex. The right-hand-side encodes the (Hamming) \emph{weight distribution} of codewords. It is well-known that the weight distribution of the codewords of the dual is completely determined by the \emph{MacWilliams identity} \cite{MacWilliams62, MacwilliamsJessie1963ATot}. Since the association of linear codes and reflexive Laplacian simplices is also duality-preserving (Theorem \ref{T-dualthm}), we formulate the following.
\begin{prob}
Can one use the MacWilliams identity to better understand the $h^*$-vector of the dual of a reflexive Laplacian simplex?
\end{prob}
Making use of Remark \ref{R-dual1} it is easy to see that the question above can be asked for any lattice simplex.
\bibliographystyle{abbrv}
\bibliography{LS2}
\end{document}